\documentclass[english,british]{article}
\usepackage{ae,aecompl}
\usepackage[T1]{fontenc}
\usepackage[latin9]{inputenc}
\usepackage{geometry}
\geometry{verbose,tmargin=2.4cm,bmargin=2.4cm,lmargin=2.4cm,rmargin=2.4cm,headheight=1cm,headsep=1cm,footskip=1cm}
\setlength{\parskip}{\smallskipamount}
\setlength{\parindent}{0pt}
\usepackage{amsmath}
\usepackage{amsthm}
\usepackage{amssymb}
\usepackage{hyperref}
\newcommand{\IE}{\mathbb{E}}
\newcommand{\IP}{\mathbb{P}}

\newcommand{\bx}{\mathbf{x}}
\newcommand{\bk}{\mathbf{k}}
\newcommand{\dd}{\;\text{d\:\!}}
\newcommand{\eh}{\frac{1}{2}}
\makeatletter
\theoremstyle{plain}
\newtheorem{thm}{\protect\theoremname}
\theoremstyle{definition}
\newtheorem{defn}[thm]{\protect\definitionname}
\theoremstyle{remark}
\newtheorem{rem}[thm]{\protect\remarkname}
\theoremstyle{plain}
\newtheorem{lem}[thm]{\protect\lemmaname}
\theoremstyle{plain}
\newtheorem{prop}[thm]{\protect\propositionname}
\theoremstyle{plain}
\newtheorem{cor}[thm]{\protect\corollaryname}

\date{}

\makeatother

\usepackage{babel}
\addto\captionsbritish{\renewcommand{\corollaryname}{Corollary}}
\addto\captionsbritish{\renewcommand{\definitionname}{Definition}}
\addto\captionsbritish{\renewcommand{\lemmaname}{Lemma}}
\addto\captionsbritish{\renewcommand{\propositionname}{Proposition}}
\addto\captionsbritish{\renewcommand{\remarkname}{Remark}}
\addto\captionsbritish{\renewcommand{\theoremname}{Theorem}}
\addto\captionsenglish{\renewcommand{\corollaryname}{Corollary}}
\addto\captionsenglish{\renewcommand{\definitionname}{Definition}}
\addto\captionsenglish{\renewcommand{\lemmaname}{Lemma}}
\addto\captionsenglish{\renewcommand{\propositionname}{Proposition}}
\addto\captionsenglish{\renewcommand{\remarkname}{Remark}}
\addto\captionsenglish{\renewcommand{\theoremname}{Theorem}}
\providecommand{\corollaryname}{Corollary}
\providecommand{\definitionname}{Definition}
\providecommand{\lemmaname}{Lemma}
\providecommand{\propositionname}{Proposition}
\providecommand{\remarkname}{Remark}
\providecommand{\theoremname}{Theorem}

\begin{document}
\global\long\def\IN{\mathbb{N}}%
\global\long\def\II{\mathbbm{1}}%
\global\long\def\IZ{\mathbb{Z}}%
\global\long\def\IQ{\mathbb{Q}}%
\global\long\def\IR{\mathbb{R}}%
\global\long\def\IC{\mathbb{C}}%
\global\long\def\IP{\mathbb{P}}%
\global\long\def\IE{\mathbb{E}}%

\title{Limit Theorems for Multi-Group Curie-Weiss Models via the Method of Moments}
\author{Werner Kirsch\thanks{FernUniversit\"at in Hagen, Germany, werner.kirsch@fernuni-hagen.de}
\;and Gabor Toth\thanks{IIMAS-UNAM, Mexico City, Mexico, gabor.toth@iimas.unam.mx}}
\maketitle
\begin{abstract}
\noindent We study a multi-group version of the mean-field or Curie-Weiss spin
model. For this model, we show how, analogously to the classical (single-group)
model, the three temperature regimes are defined. Then we use the
method of moments to determine for each regime how the vector of the
group magnetisations behaves asymptotically. Some possible applications
to social or political sciences are discussed.
\end{abstract}
Keywords: Curie-Weiss model, mean-field model, limit theorem, method
of moments

2020 Mathematics Subject Classification: 60F05, 82B20

\section{Definition of the Model}

The Curie-Weiss model (CWM) is usually defined for a single set of
spins or binary random variables. We have a fixed number of spins,
which assume one of two values, $\pm1$. So the state space of the
model is $\left\{ -1,1\right\} ^{N}$, where $N\in\mathbb{N}$ is
the number of spins. The spins have a tendency to align with each
other. If the majority of other spins is positive, then the conditional
probability of a given spin being positive is greater than $1/2$.
The probability measure that describes the single-group CWM for every
spin configuration $\left(x_{1},\ldots,x_{N}\right)\in\left\{ -1,1\right\} ^{N}$
is given by
\begin{equation}
\mathbb{P}\left(X_{1}=x_{1},\ldots,X_{N}=x_{N}\right)=Z^{-1}\exp\left(\frac{\beta}{2N}\left(\sum_{i=1}^{N}x_{i}\right)^{2}\right),\label{eq:P_single-group}
\end{equation}
where $\beta\geq0$ is the inverse temperature parameter.

The CWM has been extensively studied. It is named after Pierre Curie
and Pierre Weiss, who first used the mean-field approach to study
phase transitions in spin models. The CWM is also called the Husimi-Temperley
model, since it was first introduced by Husimi \cite{Husimi} and
Temperley \cite{Temperley}. Subsequently, it was discussed by Kac
\cite{Kac}, Ellis-Newman \cite{EllisN}, and many other authors. For an overview and more references see Thompson \cite{Thompson} and Ellis \cite{Ellis}. More
recently, the CWM has been used in the context of social and political
interactions. The idea of using models from statistical mechanics
to study social interactions goes back to F\"ollmer \cite{F=0000F6}.
The Curie-Weiss model specifically was first employed in \cite{BD}.
See e.g.\! \cite{CGh,GBC,WK-HO,KL,Toth,OEA,LSV,KT_opt_weights_CW} for other
applications. Multi-group versions of this model were introduced in
\cite{CG2008} and \cite{BRS}, and analysed by other authors as well
\cite{CGh,FC,FM,Co2014,LS,KLSS}. The authors of the present article
have studied special cases of the models considered in this article
with only two groups in \cite{KT1,KT2}. See the PhD thesis \cite{Toth}
of one of the authors for a detailed exposition of not only the CWM
but other models of voting behaviour.

As mentioned, this multi-group model has been analysed by several
different authors. Among the articles published, \cite{FC,KLSS} contain
many of the same limit theorems to be found in this article. The methods
employed in the earlier articles are mainly analytical in nature.
We present proofs using the method of moments which has a more combinatorial
flavour. We also prove strong laws of large numbers and a conditional central limit theorem for the low temperature regime, which to our knowledge have not been published in other articles.

The limit theorems that hold for this multi-group model are similar
to the theorems which hold for the single-group model (see Chapters
IV and V of \cite{Ellis} for an exposition). There are laws of large
numbers and central limit theorems for the vector of group magnetisations,
and these are multivariate versions of the results for the single-group
model.

Multi-group models of this type have been used to study social dynamics
and political decision making. For example, consider a population
of voters belonging to different groups or constituencies such as
a federal republic with a number of partially autonomous states. Typical examples are the Council of the European Union
and to a large extent the Electoral College of the USA. Each
group elects a representative to vote on its behalf in a council.
The voters within each group influence each other in their decisions,
more so than voters belonging to different groups do. In fact, it
may even be the case that voters belonging to different groups tend
to make contrary decisions.

To study such a situation, we define a model with $M$ different groups
of spins that potentially interact with each other in different ways.
Let
\[
(X_{11},X_{12},\ldots,X_{1N_{1}},\ldots,X_{M1},X_{M2},\ldots,X_{MN_{M}})\in\prod_{\lambda=1}^{M}\{-1,1\}^{N_{\lambda}}
~=~\{-1,1\}^{N}
\]
be a random spin configuration of the entire population. $X_{\lambda i}$
is the $i$-th spin of $N_\lambda$ in group $\lambda\in\left\{ 1,\ldots,M\right\} $.
Each group $\lambda$ consists of $N_{\lambda}$ spins, hence $\sum_{\lambda=1}^{M}N_{\lambda}=N$.
Instead of a single inverse temperature parameter, there is a coupling
matrix that describes the interactions. We will call this matrix $J:=(J_{\lambda\mu})_{\lambda,\mu=1,\ldots,M}$.
Throughout this paper we will always assume that $J$ is positive semi-definite.
Just as in the single-group model, there is a Hamiltonian function
that assigns each configuration a certain energy level. This energy
level can also be interpreted as how costly a certain situation is
in terms of the conflict between different voters.
\begin{align}\label{def:model}
\mathbb{H}(x_{11},\ldots,x_{MN_{M}}):=-\frac{1}{2N}\sum_{\lambda,\mu=1}^{M}J_{\lambda\mu}
\sum_{i=1}^{N_{\lambda}}\sum_{j=1}^{N_{\mu}}x_{\lambda i}x_{\mu j}.
\end{align}
Instead of each spin interacting with each other spin in exactly the
same way, spins in different groups $\lambda,\mu$ are coupled by
a coupling constant $J_{\lambda\mu}$. These coupling constants subsume
the inverse temperature parameter $\beta$ found in the single-group
model. We note that, depending on the signs of the coupling parameters
$J_{\lambda\mu}$, different configurations have different energy
levels assigned to them by $\mathbb{H}$. If all coupling parameters
are positive, there are two configurations that have the lowest possible
energy levels: $(-1,\ldots,-1)$ and $(1,\ldots,1)$. All other configurations
receive higher energy levels. The highest levels are those where the
spins are evenly split (or very close to it in case of odd group sizes).

We want to emphasise that the Hamiltonian \ref{def:model} -- similarly to that of the single-group model defined in \ref{eq:P_single-group} -- does not contain a term reflecting an external magnetic field. The reason for this is that applications to social sciences discussed above usually presuppose a symmetric probability measure on the space of spin configurations $\{-1,1\}^N$.

\begin{defn} A collection $(X_{11},\ldots,X_{1N_{1}},\ldots,X_{M1},\ldots,X_{MN_{M}}) $ of $\{ -1,1 \}$-valued random variables is called
an $M$-group Curie-Weiss model with coupling matrix $J$ if
the probability
of each of the $2^{N}$ spin configurations is given by
\begin{align}
\mathbb{P}\left(X_{11}=x_{11},\ldots,X_{MN_{M}}=x_{MN_{M}}\right):=Z^{-1}e^{-\mathbb{H}\left(x_{11},\ldots,x_{MN_{M}}\right)},\label{eq:Pbeta}
\end{align}
where each $x_{\lambda i}$ is in $\{-1,1\}$ and $Z$ is a normalisation
constant which depends on $N$ and $J$. The measure  $\mathbb{P}$ is called the `canonical ensemble' associated to the energy $\mathbb{H} $.
\end{defn}

As there is no external magnetic field, the measure $\mathbb{P}$
satisfies the symmetry condition
\[
\mathbb{P}\left(X_{11}=x_{11},\ldots,X_{MN_{M}}=x_{MN_{M}}\right)=\mathbb{P}\left(X_{11}=-x_{11},\ldots,X_{MN_{M}}=-x_{MN_{M}}\right)
\]
for all configurations.

In the definition of homogeneous coupling matrices below we will use the
notation
\begin{defn}
\label{def:hom_matrix}Let $(c)_{\lambda,\mu=1,\ldots,M}$ stand for
an $M\times M$ matrix with each entry equal to the constant $c$.
\end{defn}

We will deal with two classes of coupling matrices in this article:
\begin{enumerate}
\item Homogeneous coupling matrices $J=(\beta)_{\lambda,\mu=1,\ldots,M},$
where all entries are equal to the same constant $\beta\geq0$. In this case, $J$ is positive semi-definite, but not positive definite.
\item Heterogeneous coupling matrices $J=(J_{\lambda,\mu})_{\lambda,\mu=1,\ldots,M},$
which we assume to be positive definite.
\end{enumerate}

We are interested in the asymptotic behaviour of the so called magnetisations.
These are the sums of all spins belonging to each group:
\begin{align}
\boldsymbol{S} & :=(S_{1},\ldots,S_{M}):=\left(\sum_{i_{1}=1}^{N_{1}}X_{1i_{1}},\ldots,\sum_{i_{M}=1}^{N_{M}}X_{Mi_{M}}\right).\label{eq:Sums}
\end{align}
Since these magnetisations grow without bound as $N\rightarrow\infty$,
we need to normalise them by dividing each component $S_{\lambda}$
by a suitable power $\gamma$ of $N_{\lambda}$. The power will turn
out to depend on the regime the model is in, which we will see is
determined by $J$ and the asymptotic relative group sizes $\alpha_{1},\ldots,\alpha_{M}$:
\[
\alpha_{\lambda}:=\lim_{N\rightarrow\infty}\frac{N_{\lambda}}{N},N_{\lambda}\rightarrow\infty\text{ as }N\rightarrow\infty.
\]
Note that we assume that as the overall population goes to infinity,
so does each group. The $\alpha_{\lambda}$'s sum up to $1$. We do not
assume that all $\alpha_{\lambda}$ are necessarily positive. If $\alpha_{\lambda}=0$,
we will say that group $\lambda$ is `small'.

Throughout this article, we use Greek letters $\lambda,\mu,\nu$ to
index groups and Latin letters $i,j,k$ to index the individual spins.

This article consists of seven sections. After this introduction,
we define the three regimes of the model in Section \ref{sec:Regimes}.
Then, we present and discuss the results of this paper in Section
\ref{sec:Results}. The remainder of the article is dedicated to the
proof of these results. We introduce some combinatorial concepts in
Section \ref{sec:Combinatorial-Concepts} that are necessary for the
application of the method of moments. Next, we calculate correlations
of the spin variables of the form $\mathbb{E}\left(X_{11}\cdots X_{1k_{1}}\cdots X_{M1}\cdots X_{Mk_{M}}\right)$
in Section \ref{sec:Correlations}. The moments of $\boldsymbol{S}$
are then calculated using these correlations in Section \ref{sec:Moments}.
Finally, we prove a strong law of large numbers for the magnetisations
in Section \ref{sec:SLLN}.\\

\textsl{Acknowledgement:} We thank the referees for their careful reading of the manuscript. Their suggestions helped to improve the paper considerably.

\section{\label{sec:Regimes}The Three Regimes of the Model}

We call the regimes of the model `temperature regimes' because in
the single-group model the parameter $\beta$ can be interpreted as
the inverse temperature. There, $\beta<1$ is called the `high temperature
regime', $\beta=1$ the `critical regime', and $\beta>1$ the `low
temperature regime'. We use the same definition for homogeneous coupling
matrices, as it turns out that $\boldsymbol{S}$ behaves differently
in each of these three regimes.

Note that the Gibbs measure (\ref{eq:Pbeta}) for the homogeneous
model is identical to the Gibbs measure of the single-group model
(\ref{eq:P_single-group}). However, the  magnetisations
(\ref{eq:Sums}) form a random \emph{vector} not a scalar as in the single-group
model. Also, the entries of this random vector are correlated for
$\beta>0$. Thus, the analysis of the homogeneous model goes beyond
the classical results concerning the classical CWM. We remark that the coupling matrix $(\beta)_{\lambda\mu}$ of the homogeneous model has determinant 0, so it is not a special case but rather a
limit case of the heterogeneous model which has strictly positive determinant.
\begin{defn}
\label{def:regimes_hom}For homogeneous coupling matrices, we define
the high temperature regime to be $\beta<1$, the critical regime
to be $\beta=1$, and the low temperature regime to be $\beta>1$.
\end{defn}

For heterogeneous coupling matrices, the situation is somewhat more
complicated. The parameter space is
\begin{align*}
\Phi & :=\left\{ (\alpha_{1},\ldots,\alpha_{M})\;|\;\alpha_{1},\ldots,\alpha_{M}\geq0,\sum_{\lambda=1}^{M}\alpha_{\lambda}=1\right\} \times\left\{ J\;|\;J\text{ is an }M\times M\text{ positive definite matrix}\right\} .
\end{align*}
We define $\boldsymbol{\alpha}:=\text{diag}(\alpha_{1},\ldots,\alpha_{M})$,
where `diag' stands for a diagonal matrix with the entries given
between parentheses, and
\begin{equation}
H:=J^{-1}-\boldsymbol{\alpha}.\label{eq:def_Hessian}
\end{equation}
We call the above matrix $H$ because, as we shall later see, it is
the Hessian matrix of a function whose minima we have to find. We
define the inverse of the coupling matrix $L:=J^{-1}$. For future
reference, we also define the `square root' of the diagonal matrix
$\alpha$:
\begin{equation}
\sqrt{\boldsymbol{\alpha}}:=\text{diag}\left(\sqrt{\alpha_{1}},\ldots,\sqrt{\alpha_{M}}\right).\label{eq:sqrt_alpha}
\end{equation}

\begin{defn}
\label{def:high_temp}For heterogeneous coupling matrices, the `high
temperature regime' is the set of parameters
\[
\Phi_{h}:=\{\phi\in\Phi\;|\;H\text{ is positive definite}\}.
\]
\end{defn}

\begin{rem}
High temperature means weak interaction between voters. They influence
each other's decisions \foreignlanguage{english}{weakly}, but polarisation
of votes is still possible and happens frequently.
\end{rem}

Now we define the critical regime for heterogeneous coupling matrices.
\begin{defn}
\label{def:crit_reg}For heterogeneous coupling matrices, the `critical
regime' is the set of parameters
\[
\Phi_{c}:=\{\phi\in\Phi\;|\;H\text{ is positive semi-definite but not positive definite}\}.
\]
\end{defn}

Having defined the high temperature and critical regimes, we can now
define the low temperature regime as the complement of the union of these two sets
in the parameter space.
\begin{defn}
\label{def:low_temp}For heterogeneous coupling matrices, the `low
temperature regime' is the set of parameters
\[
\Phi_{l}:=\Phi\backslash(\Phi_{h}\cup\Phi_{c}).
\]
\end{defn}

\begin{rem}
The coupling constants in the low temperature regime are high in relation
to the inverse group sizes. Low temperature means very strong interactions
between voters.
\end{rem}

\section{\label{sec:Results}Results}

We discuss the results for homogeneous and heterogeneous coupling
matrices together, as they are qualitatively similar. We comment on
any differences.

\subsection{High Temperature Results}

Let for all $x\in\mathbb{R}^{M}$ the symbol $\delta_{x}$ stand for
the Dirac measure at the point $x$, and $\mathcal{N}(0,C)$ for the
multivariate normal distribution with mean $0$ and covariance matrix
$C$. We shall write `$\underset{N\to\infty}{\Longrightarrow}$'
for weak convergence as $N\to\infty$.

In the high temperature regime, we have a Law of Large Numbers (LLN)
and a Central Limit Theorem (CLT). First the LLN:
\begin{thm}
\label{thm:LLN_high_temp}For homogeneous and heterogeneous coupling
matrices, in their respective high temperature regimes, we have
\[
\left(\frac{S_{1}}{N_{1}},\ldots,\frac{S_{M}}{N_{M}}\right)\underset{N\to\infty}{\Longrightarrow}\delta_{0}.
\]
\end{thm}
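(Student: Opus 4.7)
The plan is to apply the method of moments. Every coordinate $S_\lambda/N_\lambda$ lies in $[-1,1]$, so the joint law is supported on the compact cube $[-1,1]^M$, on which weak convergence is equivalent to convergence of all mixed moments; since $\delta_0$ has $\int x_1^{k_1}\cdots x_M^{k_M}\,d\delta_0 = 0$ for every multi-index with $\sum_\lambda k_\lambda \geq 1$, it suffices to prove
\[
\mathbb{E}\!\left[\prod_{\lambda=1}^{M}\!\left(\frac{S_\lambda}{N_\lambda}\right)^{\!k_\lambda}\right] \longrightarrow 0
\]
for every such multi-index. By the spin-flip symmetry of $\mathbb{P}$, odd-order mixed moments vanish identically, so only even orders require estimation.

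The natural tool for producing these moments is a Hubbard--Stratonovich transformation. In the heterogeneous (positive definite) case it gives
\[
\exp\!\Bigl(\tfrac{1}{2N}\textstyle\sum_{\lambda,\mu}J_{\lambda\mu} S_\lambda S_\mu\Bigr) \;\propto\; \int_{\mathbb{R}^M} \exp\!\Bigl(-\tfrac{N}{2}\, y^{\top} J^{-1} y + \textstyle\sum_\lambda y_\lambda S_\lambda\Bigr)\,dy,
\]
and in the homogeneous case the same device applied to $\beta(S_1+\cdots+S_M)^2/(2N)$ introduces a single scalar auxiliary field. Under each fixed $y$ the spins decouple into independent Bernoullis, so after summing them out the target moment becomes a ratio
\[
\frac{\int_{\mathbb{R}^M} g_k(y)\, e^{-N F_N(y)}\,dy}{\int_{\mathbb{R}^M} e^{-N F_N(y)}\,dy},
\]
where $F_N(y) \to F(y) := \tfrac{1}{2} y^{\top} J^{-1} y - \sum_\lambda \alpha_\lambda \ln(2\cosh y_\lambda)$ and $g_k$ is a polynomial prefactor built out of the factors $\tanh(y_\lambda)^{k_\lambda}$.

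The argument then closes by a Laplace analysis of this ratio. One computes $\nabla F(0) = 0$ and $\nabla^2 F(y) = J^{-1} - \boldsymbol{\alpha}\,\mathrm{diag}(\mathrm{sech}^2 y_\lambda) \succeq J^{-1} - \boldsymbol{\alpha} = H$ since each $\mathrm{sech}^2 y_\lambda \leq 1$; in the high-temperature regime $H$ is positive definite by Definition \ref{def:high_temp}, so $F$ is uniformly strictly convex on all of $\mathbb{R}^M$ and $y=0$ is its unique global minimiser. Rescaling $y = z/\sqrt{N}$ and applying standard Laplace asymptotics then gives $\mathbb{E}[(S_\lambda/N_\lambda)^{k_\lambda}] = O(N^{-k_\lambda/2})$ for each factor, which vanishes whenever $k_\lambda \geq 1$. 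The main obstacle is controlling the Laplace remainder uniformly in the multi-index $k$ (in particular, handling the polynomial prefactor $g_k$ and the tails of the $y$-integral); the uniqueness of the minimiser, which is often the crux in such problems, is here handed to us for free by the strict convexity bound once $H$ is known to be positive definite.
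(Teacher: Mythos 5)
Your proposal is correct, and it rests on the same two pillars as the paper's argument (the Hubbard--Stratonovich/de Finetti representation and Laplace asymptotics at the unique minimiser of $F$, whose uniqueness follows from $\nabla^{2}F\succeq H\succ0$ exactly as in the paper's Proposition \ref{prop:pos-def}), but the way you assemble the moments is genuinely different. The paper first extracts the asymptotics of the individual correlations $\mathbb{E}\left(X_{11}\cdots X_{1k_{1}}\cdots X_{M1}\cdots X_{Mk_{M}}\right)$ from the Laplace analysis, then expands $\prod_{\lambda}(S_{\lambda}/N_{\lambda})^{K_{\lambda}}$ into $N^{K}$ such correlations, groups them by profile vectors, and proves a counting lemma showing that the net exponent of $N$ in every profile class is strictly negative. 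You instead keep the sum $S_{\lambda}$ intact, push the whole moment through the conditional measure $P_{y}$ so that it becomes a ratio of integrals with prefactor $g_{k}(y)=\prod_{\lambda}E_{y}\left[(S_{\lambda}/N_{\lambda})^{k_{\lambda}}\right]$, and let Laplace's method at $y=0$ kill the limit because $g_{k}(0)=0$; the profile-vector combinatorics is thereby absorbed into the elementary fact that $E_{y}\left[(S_{\lambda}/N_{\lambda})^{k_{\lambda}}\right]=\tanh^{k_{\lambda}}(y_{\lambda})+O(1/N)$ uniformly in $y$. Your route is shorter for the LLN alone (one only needs the prefactor to vanish at the minimiser, not the sharp rate $N^{-k/2}$), while the paper's correlation-plus-profile machinery is reused verbatim for the CLT and the critical and low temperature regimes, which is why the authors set it up in that generality. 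Two minor points: the bound $O(N^{-k_{\lambda}/2})$ ``for each factor'' should be stated for the full mixed moment as $O(N^{-k/2})$ with $k=\sum_{\lambda}k_{\lambda}$, since the moment does not factorise over groups before conditioning; and no uniformity in the multi-index $k$ is actually required, because the method of moments only asks for convergence of each fixed moment separately.
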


\begin{rem}
\label{rem:LLN_high}The above LLN holds even in a strong sense, i.e.\!
in terms of almost sure convergence rather than convergence in distribution.
See Section \ref{sec:SLLN}.
\end{rem}

Next we state the CLT. In the following, let $I$ be the identity
matrix. Its dimensions should be clear from the context, such as below,
where $I$ is an $M\times M$ matrix.
\begin{thm}
\label{thm:CLT}For homogeneous and heterogeneous coupling matrices,
in their respective high temperature regimes, we have
\[
\left(\frac{S_{1}}{\sqrt{N_{1}}},\ldots,\frac{S_{M}}{\sqrt{N_{M}}}\right)\underset{N\to\infty}{\Longrightarrow}\mathcal{N}(0,C),
\]
where the covariance matrix $C$ is given by
\[
C=I+\sqrt{\boldsymbol{\alpha}}\Sigma\sqrt{\boldsymbol{\alpha}},
\]
and the matrix $\Sigma$ is $\left(\frac{\beta}{1-\beta}\right)_{\lambda,\mu=1,\ldots,M}$
if $J$ is homogeneous and $H^{-1}$ if $J$ is heterogeneous.
\end{thm}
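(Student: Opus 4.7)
My plan is to use the method of moments: since $\mathcal N(0,C)$ is moment-determined, it suffices to show that, for every multi-index $\mathbf k=(k_1,\ldots,k_M)\in\mathbb N_0^M$,
\begin{equation*}
\mathbb E\!\left[\prod_{\lambda=1}^{M}\!\left(\tfrac{S_\lambda}{\sqrt{N_\lambda}}\right)^{k_\lambda}\right]\;\xrightarrow[N\to\infty]{}\;\mathbb E\!\left[\prod_{\lambda=1}^{M} Z_\lambda^{k_\lambda}\right],\qquad \boldsymbol Z\sim\mathcal N(0,C).
\end{equation*}
All odd moments vanish on both sides by the spin-flip symmetry of $\mathbb P$ recorded in the definition of the model, so only the even moments are substantive. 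Since the Hamiltonian depends on the configuration only through $\boldsymbol S$, the first step is to write the marginal law
\begin{equation*}
\mathbb P(\boldsymbol S=\boldsymbol s)=Z^{-1}\prod_{\lambda=1}^{M}\binom{N_\lambda}{(N_\lambda+s_\lambda)/2}\exp\!\left(\tfrac{1}{2N}\langle \boldsymbol s,J\boldsymbol s\rangle\right),
\end{equation*}
apply Stirling together with the binomial entropy expansion $I(m)=\tfrac{1+m}{2}\log(1+m)+\tfrac{1-m}{2}\log(1-m)=\tfrac{m^2}{2}+\tfrac{m^4}{12}+O(m^6)$, and rescale by $s_\lambda=\sqrt{N_\lambda}\,t_\lambda$. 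Each moment then becomes a ratio of discrete sums whose leading-order behaviour is a ratio of Gaussian integrals with common quadratic form $-\tfrac12\langle t,(I-\sqrt{\boldsymbol\alpha}J\sqrt{\boldsymbol\alpha})t\rangle$ and an extra polynomial factor $\prod_\lambda t_\lambda^{k_\lambda}$ in the numerator.

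The identification of the covariance rests on the algebraic identity
\begin{equation*}
\bigl(I-\sqrt{\boldsymbol\alpha}J\sqrt{\boldsymbol\alpha}\bigr)\bigl(I+\sqrt{\boldsymbol\alpha}H^{-1}\sqrt{\boldsymbol\alpha}\bigr)=I,
\end{equation*}
which follows from $H=J^{-1}-\boldsymbol\alpha$ by direct multiplication. In the heterogeneous high-temperature regime $H$ is positive definite by Definition \ref{def:high_temp}, hence so is the quadratic form driving the Laplace integral, and its inverse is precisely $C=I+\sqrt{\boldsymbol\alpha}H^{-1}\sqrt{\boldsymbol\alpha}$; the Gaussian integrals therefore produce the claimed moments of $\mathcal N(0,C)$. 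In the homogeneous case $J=\beta\mathbf 1_{M\times M}$ is only rank one, so Definition \ref{def:high_temp} does not literally apply, but the relevant matrix is
\begin{equation*}
I-\beta\sqrt{\boldsymbol\alpha}\mathbf 1_{M\times M}\sqrt{\boldsymbol\alpha}=I-\beta vv^\top,\qquad v:=\sqrt{\boldsymbol\alpha}\mathbf 1,
\end{equation*}
with $v^\top v=\sum_\lambda\alpha_\lambda=1$. By Sherman-Morrison it is positive definite iff $\beta<1$ and has inverse $I+\tfrac{\beta}{1-\beta}\sqrt{\boldsymbol\alpha}\mathbf 1_{M\times M}\sqrt{\boldsymbol\alpha}$, which is exactly $C$ with the stated $\Sigma=(\beta/(1-\beta))_{\lambda,\mu=1,\ldots,M}$.

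The main technical obstacle is the rigorous justification of the Laplace asymptotics -- passing to the limit inside a discrete sum while carrying the polynomial factor. Three error sources must be controlled uniformly in $N$: the Stirling remainder, the $O(m^6)$ tail of the entropy expansion, and the region where this Taylor expansion fails. A standard strategy is to split at $|t_\lambda|\le N^{1/8}$; on this bulk region the positivity of the quartic coefficient of $I$ and of $I-\sqrt{\boldsymbol\alpha}J\sqrt{\boldsymbol\alpha}$ together supply an integrable Gaussian-type majorant that dominates $\prod_\lambda |t_\lambda|^{k_\lambda}$, while on the complementary tail the crude bound $I(m)\ge m^2/2$ gives an exponentially small contribution. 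These estimates are standard for Laplace-method proofs but must be executed carefully to yield convergence of every fixed moment; once the analytic part is in place, the clean algebraic identity above makes the limit covariance fall out immediately.
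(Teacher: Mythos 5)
Your proposal is sound and arrives at the correct covariance, but it takes a genuinely different route from the paper. The paper never touches the marginal law of $\boldsymbol{S}$ directly: it first applies a Hubbard--Stratonovich transformation to obtain a de Finetti-type representation of $\mathbb{P}$ (Proposition \ref{prop:de_Finetti_het}), uses the multivariate Laplace method to extract the asymptotic spin correlations $\mathbb{E}\left(X_{11}\cdots X_{Mk_{M}}\right)\approx N^{-k/2}m_{k_{1},\ldots,k_{M}}(H^{-1})$ (Theorems \ref{thm:EXY-hom} and \ref{thm:EXY-het_high}), and then assembles the moments of $\left(S_{1}/\sqrt{N_{1}},\ldots,S_{M}/\sqrt{N_{M}}\right)$ by the profile-vector bookkeeping of Proposition \ref{thm:comb-coeff-multiindex}, showing that only profiles with no index repeated three or more times survive; the identification of the limit as $\mathcal{N}(0,C)$ is then done by induction on $M$ via a recursive form of Isserlis's theorem. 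You instead exploit that $\mathbb{H}$ depends on the configuration only through $\boldsymbol{S}$, write the law of $\boldsymbol{S}$ as a product of binomial weights times the Gibbs factor, and run a Stirling/entropy Laplace argument on the lattice sum after rescaling by $\sqrt{N_{\lambda}}$. Your algebraic identity $\bigl(I-\sqrt{\boldsymbol{\alpha}}J\sqrt{\boldsymbol{\alpha}}\bigr)\bigl(I+\sqrt{\boldsymbol{\alpha}}H^{-1}\sqrt{\boldsymbol{\alpha}}\bigr)=I$ is correct (it is equivalent to $I-J\boldsymbol{\alpha}=JH$, i.e.\ to $H=J^{-1}-\boldsymbol{\alpha}$), as is the Sherman--Morrison treatment of the rank-one homogeneous case, and your domination strategy works even more simply than you suggest, since $I(m)\geq m^{2}/2$ yields a \emph{global} Gaussian majorant with the quadratic form $I-\sqrt{\boldsymbol{\alpha}_{N}}J\sqrt{\boldsymbol{\alpha}_{N}}$, which is uniformly positive definite for large $N$ in the high temperature regime. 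What your route buys is a much more transparent identification of the covariance (no Isserlis recursion, no induction on $M$) and a unified treatment of the homogeneous and heterogeneous cases; what the paper's route buys is the intermediate correlation asymptotics and the combinatorial machinery, both of which are reused essentially verbatim for the critical and low temperature regimes, where your direct local-limit computation would have to be redone with quartic rather than quadratic leading behaviour. The remaining work in your write-up is purely technical (uniformity of the Stirling and Taylor remainders and the lattice-sum-to-integral passage), and you have identified the right error sources, so there is no gap in the idea.
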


\begin{rem}
If a group $\nu$ is small, i.e.\! $\alpha_{\nu}=0$, then the marginal
distribution of $\frac{S_{\nu}}{\sqrt{N_{\nu}}}$ asymptotically follows
a standard normal distribution and is asymptotically independent of
all other $\frac{S_{\lambda}}{\sqrt{N_{\lambda}}}$.
\end{rem}

\subsection{Critical Regime Results}

The LLN holds for the critical regime as well.
\begin{thm}
\label{thm:LLN_crit_reg}For homogeneous coupling matrices, in the
critical regime, we have
\[
\left(\frac{S_{1}}{N_{1}},\ldots,\frac{S_{M}}{N_{M}}\right)\underset{N\to\infty}{\Longrightarrow}\delta_{0}.
\]
If $M=2$ and $J$ is heterogeneous, we have
\[
\left(\frac{S_{1}}{N_{1}},\frac{S_{2}}{N_{2}}\right)\underset{N\to\infty}{\Longrightarrow}\delta_{0}.
\]
\end{thm}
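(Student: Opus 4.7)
My plan is to treat the two parts of Theorem~\ref{thm:LLN_crit_reg} separately.

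\textbf{Homogeneous case.} When $J = (\beta)_{\lambda,\mu}$ with $\beta = 1$, the Hamiltonian depends only on the total magnetisation $T := \sum_{\lambda=1}^{M} S_\lambda$: indeed $\mathbb{H} = -T^{2}/(2N)$, so the multi-group model reduces \emph{exactly} to the classical single-group Curie--Weiss model on $N$ spins at $\beta = 1$, for which $T/N \underset{N\to\infty}{\Longrightarrow} 0$ is well known (see \cite{Ellis}). Moreover the canonical ensemble is invariant under every permutation of the $N$ spins, so they are exchangeable; conditional on $T$, each $S_\lambda$ is hypergeometric with $\mathbb{E}[S_\lambda \mid T] = (N_\lambda/N)\,T$ and conditional variance of order $N_\lambda$. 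Combining these two facts yields $S_\lambda/N_\lambda \underset{N\to\infty}{\Longrightarrow} 0$ for every $\lambda$.

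\textbf{Heterogeneous case with $M=2$.} Assuming $\alpha_1, \alpha_2 > 0$ (otherwise the problem reduces to a single-group critical CWM), I would use a Laplace/large-deviation argument. Counting the spin configurations with prescribed $m_\lambda := S_\lambda/N_\lambda$ and applying Stirling gives
\[
\mathbb{P}(m_1 = x_1,\, m_2 = x_2) \;=\; Z^{-1}\exp\!\bigl(N\, F_N(x_1, x_2) + o(N)\bigr),
\]
where $F_N$ converges uniformly on $[-1,1]^{2}$ to
\[
F(x) \;:=\; \alpha_1\, \eta(x_1) + \alpha_2\, \eta(x_2) + \tfrac{1}{2}(\boldsymbol{\alpha} x)^{\!\top} J\, (\boldsymbol{\alpha} x),
\qquad \eta(m) \;:=\; \ln 2 - \tfrac{1+m}{2}\ln(1+m) - \tfrac{1-m}{2}\ln(1-m).
\]
Once $F$ is shown to attain its global maximum only at $(0,0)$, the standard large-deviation upper bound yields $\mathbb{P}(\|m\| \ge \varepsilon) \to 0$ for every $\varepsilon > 0$, which is the required LLN.

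\textbf{Uniqueness of the maximum, and main obstacle.} The key ingredient is the pointwise bound $\eta(m) \le \ln 2 - m^2/2$, with equality iff $m = 0$: for $h(m) := \ln 2 - m^2/2 - \eta(m)$ one checks $h(0) = h'(0) = 0$ and $h''(m) = m^2/(1 - m^2) \ge 0$. Summing the two bounds weighted by $\alpha_1, \alpha_2$ and using $\alpha_1 + \alpha_2 = 1$ gives
\[
F(x) \;\le\; \ln 2 \;-\; \tfrac{1}{2}\,(\sqrt{\boldsymbol{\alpha}}\, x)^{\!\top}\bigl(\mathrm{Id} - \sqrt{\boldsymbol{\alpha}}\, J\, \sqrt{\boldsymbol{\alpha}}\bigr)(\sqrt{\boldsymbol{\alpha}}\, x).
\]
Conjugating by $\sqrt{\boldsymbol{\alpha}}^{-1}$ and inverting, the critical-regime hypothesis $H = J^{-1} - \boldsymbol{\alpha} \ge 0$ is equivalent to $\sqrt{\boldsymbol{\alpha}}\, J\, \sqrt{\boldsymbol{\alpha}} \le \mathrm{Id}$, so $F(x) \le \ln 2 = F(0,0)$; and equality in the entropy step alone already forces $x = 0$. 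The only genuinely delicate point here is the null direction of the Hessian of $F$ at the origin, which is precisely what distinguishes the critical regime from the high-temperature one; the trick above sidesteps it by absorbing the degenerate direction into the entropy bound and using the critical-regime condition only as a sign condition on the residual quadratic form. Otherwise, one would have to Taylor-expand $F$ to fourth order along the null eigenvector and verify a more intricate inequality involving $J$, $\boldsymbol{\alpha}$, and $\eta^{(4)}(0)$, which would be the main source of difficulty. The remaining technical work --- making the Stirling error uniform in $x$ on the discrete lattice of admissible $(m_1, m_2)$ --- is routine.
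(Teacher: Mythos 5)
Your proof is correct in substance, but it follows a genuinely different route from the paper's. The paper proves Theorem \ref{thm:LLN_crit_reg} by the method of moments: it expands $\mathbb{E}\bigl((S_{1}/N_{1})^{K_{1}}\cdots(S_{M}/N_{M})^{K_{M}}\bigr)$ over profile vectors and inserts the correlation asymptotics of Theorems \ref{thm:EXY-hom} (with $\beta=1$) and \ref{thm:EXY-het_crit}, checking that every summand carries a strictly negative power of $N$, so all moments vanish in the limit. You instead (i) handle the homogeneous case by observing that $\mathbb{H}=-T^{2}/(2N)$ reduces the model exactly to the single-group critical CWM and then pass from $T/N\Rightarrow0$ to the group magnetisations via exchangeability and the conditional hypergeometric variance bound, and (ii) handle the heterogeneous case by a direct entropy--energy (large-deviation) argument, where the critical-regime hypothesis $H\geq0$ enters, after the pointwise bound $\eta(m)\leq\ln 2-m^{2}/2$, as non-negativity of the residual quadratic form $\mathrm{Id}-\sqrt{\boldsymbol{\alpha}}J\sqrt{\boldsymbol{\alpha}}$; this correctly sidesteps the degenerate Hessian direction, which is the genuine subtlety of the critical regime. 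What each approach buys: yours is self-contained, yields exponential concentration rather than mere weak convergence, and the heterogeneous part extends verbatim to any $M$ with all $\alpha_{\lambda}>0$ (the paper restricts to $M=2$ only because Theorem \ref{thm:EXY-het_crit} is derived for two groups); the paper's approach, while heavier, reuses exactly the correlation and combinatorial machinery it needs anyway for the fluctuation results (Theorems \ref{Fluctuations_hom} and \ref{Fluctuations_het}) and treats all three regimes uniformly. One caveat: your parenthetical dismissal of the case $\alpha_{\lambda}=0$ is too quick --- a small group still has $N_{\lambda}\to\infty$ spins whose magnetisation $S_{\lambda}/N_{\lambda}$ must be controlled at its own scale, which your rate function (living at speed $N$) does not see; this boundary case would need a separate argument.
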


\begin{rem}
\label{rem:LLN_crit}The above LLN holds even in a strong sense. See
Section \ref{sec:SLLN}.
\end{rem}

Similarly to the Central Limit Theorem \ref{thm:CLT}, we can normalise
with a power $\gamma<1$ in order to obtain a limiting distribution
which is not concentrated in the origin. In the critical regime, the
appropriate normalising power is $\gamma=3/4$.

For homogeneous coupling matrices, we have
\begin{thm}
\label{Fluctuations_hom}For homogeneous coupling matrices, in the
critical regime, we have
\[
\left(\frac{S_{1}}{N_{1}^{3/4}},\ldots,\frac{S_{M}}{N_{M}^{3/4}}\right)\underset{N\to\infty}{\Longrightarrow}\mu.
\]
The probability measure $\mu$ on $\IR^{M}$ has moments of order
$(K_{1},\ldots,K_{M}),K:=\sum_{\nu=1}^{M}K_{\nu},$ with $K$ even,
\begin{align*}
m_{K_{1},\ldots,K_{M}}(\mu) & :=12^{\frac{K}{4}}\frac{\Gamma\left(\frac{K+1}{4}\right)}{\Gamma\left(\frac{1}{4}\right)}\alpha_{1}^{\frac{K_{1}}{4}}\cdots\alpha_{M}^{\frac{K_{M}}{4}},
\end{align*}
and $m_{K_{1},\ldots,K_{M}}(\mu)=0$ if $K$ is odd.
\end{thm}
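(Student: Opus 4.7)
The plan rests on a structural observation about the homogeneous model: every entry of $J$ equals $\beta$, so
\[
\mathbb{H}(x) = -\frac{\beta}{2N}\,T^{2}, \qquad T := \sum_{\lambda,i} x_{\lambda i} = \sum_{\lambda=1}^{M} S_\lambda,
\]
and $\mathbb{P}$ is invariant under arbitrary permutations of the $N$ spins. The marginal law of $T$ is therefore exactly that of the classical single-group Curie--Weiss model on $N$ spins at inverse temperature $\beta=1$. First I would recall, or re-derive via a fourth-order Laplace expansion of the entropy $I(m) = \tfrac{1+m}{2}\log(1+m) + \tfrac{1-m}{2}\log(1-m)$ at $m=0$, that $T/N^{3/4} \Longrightarrow Y$, where $Y$ has density proportional to $e^{-y^{4}/12}$. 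The substitution $u = y^{4}/12$ reduces both the normalising integral and the moment integrals to Gamma integrals, giving $\mathbb{E}[Y^{K}] = 12^{K/4}\,\Gamma((K+1)/4)/\Gamma(1/4)$ for even $K$ and $0$ for odd $K$ by symmetry. Heuristically this already identifies $\mu$ as the distribution of $(\alpha_1^{1/4} Y, \ldots, \alpha_M^{1/4} Y)$, a degenerate measure concentrated on a line through the origin.

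To turn this into a method-of-moments proof, I would set $K := \sum_\nu K_\nu$ and expand
\[
\mathbb{E}\left[\prod_{\lambda=1}^{M} S_\lambda^{K_\lambda}\right] = \sum_{\mathbf{i}} \mathbb{E}\left[\prod_{\lambda,j} X_{\lambda\,i_{j}^{(\lambda)}}\right].
\]
Using $X^{2}=1$ and total exchangeability of the $N$ spins, each summand equals a scalar $\tau_{|A|}$ depending only on the number $|A|$ of positions with odd multiplicity in the tuple. Tuples in which all $K$ entries are distinct contribute $\prod_\lambda N_\lambda^{K_\lambda}(1+O(1/N))\cdot \tau_K$, while the remaining contributions are indexed by non-discrete set partitions of $[K]$. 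Applying the same decomposition to the scalar quantities $\mathbb{E}[T^{r}]$ for every $r \le K$ and using the moment convergence $\mathbb{E}[(T/N^{3/4})^{r}] \to \mathbb{E}[Y^{r}]$ should yield, by downward induction on $r$, the asymptotics $\tau_{r} = N^{-r/4}\,\mathbb{E}[Y^{r}](1+o(1))$ for even $r$. Substituting this into the leading-order term, using $N_\lambda/N \to \alpha_\lambda$, and dividing by $\prod_\lambda N_\lambda^{3K_\lambda/4}$ then reproduces exactly the claimed moment formula. Odd $K$ vanishes by the symmetry $\mathbb{P}(X) = \mathbb{P}(-X)$.

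The hard part will be making this induction self-consistent by controlling the contribution of the non-discrete partitions. The key combinatorial estimate is that every non-discrete partition $\pi$ of $[K]$ satisfies $4|\pi| - \mathrm{odd}(\pi) < 3K$ strictly (replacing two singletons by a pair decreases $4|\pi| - \mathrm{odd}(\pi)$ by two, and analogously for larger blocks), so once the inductive hypothesis $\tau_{r} = O(N^{-r/4})$ is in place the sum of all non-discrete contributions to $\mathbb{E}[T^{K}]$ and to $\mathbb{E}[\prod_\lambda S_\lambda^{K_\lambda}]$ is $O(N^{3K/4 - \varepsilon})$ for some $\varepsilon > 0$, hence negligible compared with the leading term. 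Finally, because $e^{-y^{4}/12}$ decays faster than any Gaussian, the law of $Y$ satisfies Carleman's condition and is determined by its moments; the same is true of its linear pushforward $\mu$, so the method of moments delivers the desired weak convergence.
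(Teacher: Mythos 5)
Your proposal is correct, and it takes a genuinely different route from the paper's. The paper computes the correlations $\mathbb{E}\left(X_{11}\cdots X_{1k_{1}}\cdots X_{Mk_{M}}\right)$ directly from the de Finetti representation of the homogeneous measure (one-dimensional Hubbard--Stratonovich plus Laplace's method at the fourth-order minimum of $F_{\beta}$ at $t=0$; Theorem \ref{thm:EXY-hom}, case 2), and then uses the profile-vector bookkeeping of Proposition \ref{thm:comb-coeff-multiindex} to show that only multiindices with all entries distinct survive the $N_{\lambda}^{-3K_{\lambda}/4}$ normalisation. You instead exploit the fact that for homogeneous $J$ the Hamiltonian depends only on $T=\sum_{\lambda}S_{\lambda}$ and all $N$ spins are exchangeable, import the classical single-group critical limit for $T/N^{3/4}$, and recover the correlations $\tau_{r}\sim N^{-r/4}\mathbb{E}\left(Y^{r}\right)$ by inverting the moment--correlation relation inductively; your combinatorial estimate $4|\pi|-\mathrm{odd}(\pi)<3K$ for non-discrete partitions is exactly the content of the paper's ``only $\Pi_{K_{\lambda}}^{(K_{\lambda})}$ contributes'' step, just phrased in the language of set partitions. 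What your route buys is economy and transparency in the homogeneous case: it reduces everything to the known one-dimensional result, makes visible that $\mu$ is the degenerate law of $\left(\alpha_{1}^{1/4}Y,\ldots,\alpha_{M}^{1/4}Y\right)$, and explicitly supplies the moment-determinacy check (Carleman) that the paper leaves implicit; what the paper's direct computation buys is that it carries over verbatim to heterogeneous coupling, where no single scalar $T$ governs the measure and your reduction is unavailable. Two small points of care: you need moment convergence of $T/N^{3/4}$, not merely weak convergence, so you should indeed take the Laplace-expansion derivation rather than merely ``recalling'' the distributional limit (or else supply uniform integrability of all powers); and your induction extracting $\tau_{r}$ runs upward in $r$ (knowledge of $\tau_{a}$ for $a<r$ yields $\tau_{r}$), not downward as written.
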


For heterogeneous coupling matrices, we have a result for two groups.
We define $L:=J^{-1}$. For $M=2$, we will call the entries
\[
L=\left(\begin{array}{cc}
L_{1} & -\bar{L}\\
-\bar{L} & L_{2}
\end{array}\right).
\]

\begin{thm}
\label{Fluctuations_het}Let $M=2$. For heterogeneous coupling matrices,
in the critical regime, we have
\[
\left(\frac{S_{1}}{N_{1}^{3/4}},\frac{S_{2}}{N_{2}^{3/4}}\right)\underset{N\to\infty}{\Longrightarrow}\mu.
\]
The probability measure $\mu$ on $\IR^{2}$ has moments of order
$(K,Q)$ $m_{K,Q}(\mu):=$
\begin{align*}
\left[\frac{12}{\alpha_{1}(L_{2}-\alpha_{2})^{2}+\alpha_{2}(L_{1}-\alpha_{1})^{2}}\right]^{\frac{K+Q}{4}}(L_{1}-\alpha_{1})^{\frac{Q}{2}}(L_{2}-\alpha_{2})^{\frac{K}{2}}\frac{\Gamma\left(\frac{K+Q+1}{4}\right)}{\Gamma\left(\frac{1}{4}\right)}\alpha_{1}^{\frac{K}{4}}\alpha_{2}^{\frac{Q}{4}}
\end{align*}
if $K+Q$ is even and $0$ otherwise.
\end{thm}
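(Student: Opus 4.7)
The plan is to apply the method of moments, as elsewhere in this paper. Concretely, I would compute
\[
m_{N}(K,Q):=\mathbb{E}\!\left[\Bigl(\frac{S_{1}}{N_{1}^{3/4}}\Bigr)^{K}\Bigl(\frac{S_{2}}{N_{2}^{3/4}}\Bigr)^{Q}\right]
\]
for every pair $(K,Q)$ of nonnegative integers, show that $m_{N}(K,Q)\to m_{K,Q}(\mu)$ as $N\to\infty$, and then invoke Carleman's criterion --- the growth $\Gamma((K+Q+1)/4)$ of the limit moments is sub-factorial, so they determine $\mu$ uniquely --- to upgrade moment convergence to weak convergence. (In fact, one expects $\mu$ to be supported on a line, so the argument can be reduced to a one-dimensional moment problem.)

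The moment computation is set up through the Hubbard--Stratonovich identity, available because $J$ is positive definite in the heterogeneous case:
\[
e^{\frac{1}{2N}s^{T}Js}=(2\pi)^{-1}\sqrt{\det J^{-1}}\int_{\mathbb{R}^{2}}\exp\!\Bigl(-\tfrac{1}{2}\,t^{T}J^{-1}t+\tfrac{1}{\sqrt{N}}t^{T}s\Bigr)\,dt.
\]
Substituted into $m_{N}(K,Q)$ it decouples the spins; the base-measure sum factorises across groups and produces the partition-function piece $\prod_{\lambda}\cosh(t_{\lambda}/\sqrt{N})^{N_{\lambda}}$. For the moment insertion, tilting the base measure by $e^{t^{T}S/\sqrt{N}}$ turns each $X_{\lambda i}$ into an independent $\pm 1$ variable with mean $\tanh(t_{\lambda}/\sqrt{N})$, so under the tilted measure $S_{\lambda}$ concentrates at $N_{\lambda}\tanh(t_{\lambda}/\sqrt{N})$ with fluctuations of order $\sqrt{N_{\lambda}}$, giving $E^{(t)}[S_{\lambda}^{k}]=(N_{\lambda}\tanh(t_{\lambda}/\sqrt{N}))^{k}(1+o(1))$ at the scales that will dominate. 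Expanding $\log\cosh(x)=x^{2}/2-x^{4}/12+O(x^{6})$ turns the effective action in $t$ into
\[
-\tfrac{1}{2}\,t^{T}H_{N}\,t-\frac{1}{12N}\sum_{\lambda=1}^{2}\frac{N_{\lambda}}{N}\,t_{\lambda}^{4}+O(1/N^{2}),\qquad H_{N}:=J^{-1}-\mathrm{diag}\!\bigl(N_{\lambda}/N\bigr).
\]

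The critical regime is exactly the condition that the limit Hessian $H$ is singular: by Proposition \ref{prop:M_2_crit_reg_cond} it has a one-dimensional kernel, spanned by $v_{H}\propto(\sqrt{L_{2}-\alpha_{2}},\sqrt{L_{1}-\alpha_{1}})$, with positive eigenvalue $\tilde{\lambda}=L_{1}+L_{2}-\alpha_{1}-\alpha_{2}$ on the orthogonal complement. Laplace's method then calls for the anisotropic scaling $t=u\,N^{1/4}\hat v_{H}+w\,\hat v_{H}^{\perp}$ with orthonormal $\hat v_{H},\hat v_{H}^{\perp}$: the Gaussian term pins $w$ to $O(1)$ while only the quartic suppresses $u$, which by virtue of the $N^{1/4}$ factor is also $O(1)$. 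The Jacobian contributes $N^{1/4}$, the action in the limit becomes $-\tilde{\lambda}w^{2}/2-Cu^{4}$ with $C:=\tfrac{1}{12}\sum_{\lambda}\alpha_{\lambda}\hat v_{H,\lambda}^{4}$, and each tanh insertion contributes $(N^{3/4}\alpha_{\lambda}u\,\hat v_{H,\lambda})^{k_{\lambda}}$ to leading order. After taking the numerator/denominator ratio, the Gaussian factor in $w$ and the $N^{1/4}$ Jacobian cancel, and the $u$-integral yields the quartic moment ratio $C^{-(K+Q)/4}\,\Gamma((K+Q+1)/4)/\Gamma(1/4)$ when $K+Q$ is even (and zero otherwise by parity). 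Substituting the explicit $\hat v_{H}$ and $C$, the normalising factor $L_{1}+L_{2}-\alpha_{1}-\alpha_{2}$ cancels between $\hat v_{H,1}^{K}\hat v_{H,2}^{Q}$ and $C^{-(K+Q)/4}$; combined with the $\alpha_{1}^{K}\alpha_{2}^{Q}$ from the tanh factors and the $\alpha_{1}^{-3K/4}\alpha_{2}^{-3Q/4}$ produced by dividing by $N_{1}^{3K/4}N_{2}^{3Q/4}$, one recovers exactly $m_{K,Q}(\mu)$.

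The main technical hurdle is the rigorous justification of the Laplace expansion: showing that $t$ outside a shrinking neighbourhood of the origin contributes negligibly (coercivity in the kernel direction comes only from the quartic, so estimates must go beyond the Hessian) and that $E^{(t)}[S_{\lambda}^{k}]$ may be replaced by $(N_{\lambda}\tanh(t_{\lambda}/\sqrt{N}))^{k}$ uniformly on the $O(N^{1/4})$ region of $t$. Once those two points are in place, the remainder is $\Gamma$-function bookkeeping together with repeated use of the critical identity $L_{12}^{2}=(L_{1}-\alpha_{1})(L_{2}-\alpha_{2})$.
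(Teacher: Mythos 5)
Your proposal is correct and arrives at the same constants, but it organises the computation differently from the paper. The paper factors the argument into two stages: it first records the asymptotics of the correlations $\mathbb{E}(X_{11}\cdots X_{1K}X_{21}\cdots X_{2Q})$ of distinct spins in the critical regime (Theorem \ref{thm:EXY-het_crit}, whose degenerate Laplace analysis is deferred to \cite{KT2}), and then assembles the moments of $\left(S_{1}/N_{1}^{3/4},S_{2}/N_{2}^{3/4}\right)$ via the profile-vector combinatorics of Proposition \ref{thm:comb-coeff-multiindex}, showing that at the $3/4$-scaling only the profile with all indices distinct contributes. You instead insert $S_{1}^{K}S_{2}^{Q}$ directly into the Hubbard--Stratonovich representation and use concentration of $S_{\lambda}$ under the tilted product measure to replace $S_{\lambda}^{k}$ by $\left(N_{\lambda}\tanh(t_{\lambda}/\sqrt{N})\right)^{k}$; this plays exactly the role of the paper's combinatorial step (the discarded profiles correspond to the subleading cumulant corrections you absorb into the $(1+o(1))$), and your anisotropic scaling along $\ker H$ with $\hat v_{H}\propto(\sqrt{L_{2}-\alpha_{2}},\sqrt{L_{1}-\alpha_{1}})$, nonzero eigenvalue $L_{1}+L_{2}-\alpha_{1}-\alpha_{2}$ and quartic constant $C=\tfrac{1}{12}\sum_{\lambda}\alpha_{\lambda}\hat v_{H,\lambda}^{4}$ reproduces the constants of Theorem \ref{thm:EXY-het_crit} exactly (the powers of $L_{1}+L_{2}-\alpha_{1}-\alpha_{2}$ cancel as you say). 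What your route buys is self-containedness: you actually sketch the degenerate Laplace expansion that the paper only cites, and you make explicit the Carleman step that the paper leaves implicit. The price is the uniform justification of the replacement $E^{(t)}[S_{\lambda}^{k}]=(N_{\lambda}\tanh(t_{\lambda}/\sqrt{N}))^{k}(1+o(1))$ on the $O(N^{1/4})$ region together with the decay estimate off it, which you correctly identify as the technical core; note that near $u=0$ the multiplicative $(1+o(1))$ fails, but the additive error there is $O(N^{-(K+Q)/4})$ against an $O(1)$ limit and is therefore harmless. One small caveat: identifying $\ker H$ with $(\sqrt{L_{2}-\alpha_{2}},\sqrt{L_{1}-\alpha_{1}})$ presupposes $L_{12}>0$, a sign convention that is also implicit in the statement of the theorem itself.
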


\begin{rem}
For the special case $J_{11}=J_{22}=J$, $\alpha_{1}=\alpha_{2}=1/2$,
and $J+J_{12}=2$, the moments of the limiting distribution are
\[
12^{\frac{K+Q}{4}}\frac{\Gamma\left(\frac{K+Q+1}{4}\right)}{\Gamma\left(\frac{1}{4}\right)}\alpha_{1}^{\frac{K}{4}}\alpha_{2}^{\frac{Q}{4}},
\]

and hence identical to those for the model with a homogeneous coupling
matrix and $\beta=1$.
\end{rem}

\begin{rem}
If a group $\nu$ is small, then the marginal distribution of $\frac{S_{\nu}}{N_{\nu}^{3/4}}$
is asymptotically the Dirac measure $\delta_{0}$, and therefore asymptotically
independent of all other $S_{\lambda}/N_{\lambda}^{3/4}$.
\end{rem}

It is unclear what the joint distribution $\mu$ is. However, we can
deduce the limiting distribution of two linear transformations of
$\left(\frac{S_{1}}{N_{1}^{3/4}},\frac{S_{2}}{N_{2}^{3/4}}\right)$.
Let $\nu_{\eta}$ be the probability measure on $\mathbb{R}$ given
by the density function proportional to $\exp\left(-\eta x^{4}\right),x\in\mathbb{R}$.
\begin{thm}
\label{thm:Crit_Lin_Transform}Let $M=2$ and $\alpha_{1},\alpha_{2}\neq0$.
In the critical regime, these results hold:\\
For homogeneous coupling matrices, we have
\[
\frac{\sqrt{\frac{\alpha_{2}}{\alpha_{1}}}S_{1}-\sqrt{\frac{\alpha_{1}}{\alpha_{2}}}S_{2}}{\sqrt{N}}\underset{N\to\infty}{\Longrightarrow}\mathcal{N}(0,1),
\]
and
\[
\frac{S_{1}}{2\left(\alpha_{1}N_{1}^{3}\right)^{1/4}}+\frac{S_{2}}{2\left(\alpha_{2}N_{2}^{3}\right)^{1/4}}\underset{N\to\infty}{\Longrightarrow}\nu_{\eta}
\]
with $\eta=\frac{1}{12}$.

For heterogeneous coupling matrices,
\[
\frac{\sqrt{L_{1}-\alpha_{1}}}{\sqrt{\alpha_{1}N_{1}}}S_{1}-\frac{\sqrt{L_{2}-\alpha_{2}}}{\sqrt{\alpha_{2}N_{2}}}S_{2}\underset{N\to\infty}{\Longrightarrow}\mathcal{N}\left(0,1+\frac{L_{1}-\alpha_{1}}{\alpha_{1}}+\frac{L_{2}-\alpha_{2}}{\alpha_{2}}\right),
\]
and
\[
\frac{\sqrt{L_{1}-\alpha_{1}}}{\left(\alpha_{1}N_{1}^{3}\right)^{1/4}}S_{1}+\frac{\sqrt{L_{2}-\alpha_{2}}}{\left(\alpha_{2}N_{2}^{3}\right)^{1/4}}S_{2}\underset{N\to\infty}{\Longrightarrow}\nu_{\eta}
\]
with $\eta=\frac{1}{2^{6}\cdot3}\left(\frac{\alpha_{1}}{\left(L_{1}-\alpha_{1}\right)^{2}}+\frac{\alpha_{2}}{\left(L_{2}-\alpha_{2}\right)^{2}}\right)$.
\end{thm}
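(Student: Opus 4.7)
The plan is to prove all four limit statements by the method of moments, consistent with the rest of the paper, but splitting them into two qualitatively different groups. The two $\nu_{\eta}$ claims are at the critical scale $N^{3/4}$ and can be reduced directly to Theorems~\ref{Fluctuations_hom} and~\ref{Fluctuations_het}, whereas the two Gaussian claims are at the finer scale $\sqrt{N}$ and require an independent argument exploiting the fact that the chosen linear combinations lie in the non-critical direction of the Hamiltonian.

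For the $\nu_{\eta}$ claims I first compute the moments of $\nu_{\eta}$ via the substitution $u=\eta x^{4}$, which gives $\int x^{n}\,d\nu_{\eta}(x)=\eta^{-n/4}\Gamma((n+1)/4)/\Gamma(1/4)$ for even $n$ and $0$ for odd $n$. I then write the relevant linear combination as $Y=a_{1}S_{1}/N_{1}^{3/4}+a_{2}S_{2}/N_{2}^{3/4}$, expand $Y^{n}$ by the binomial theorem, and insert the joint moments supplied by Theorem~\ref{Fluctuations_hom} or~\ref{Fluctuations_het}. The weights $a_{\lambda}$ are arranged precisely so that every $\alpha_{\lambda}$-factor and $(L_{\lambda}-\alpha_{\lambda})$-factor coming from the joint moment is neutralised by the matching weight factor, and the residual sum $\sum_{k=0}^{n}\binom{n}{k}=2^{n}$ combines with the constants to reorganise into $\eta^{-n/4}$, producing $\eta=1/12$ in the homogeneous case and the stated $\eta$ in the heterogeneous case (using $12^{n/4}\cdot 2^{n}=192^{n/4}$ and $192=2^{6}\cdot 3$). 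Moment determinacy of $\nu_{\eta}$, whose tails are super-Gaussian, upgrades moment convergence to weak convergence.

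For the Gaussian claims, the observation driving the proof is that the stated linear combinations correspond (after rescaling) to the direction orthogonal to the critical mode: orthogonal to the all-ones vector in the homogeneous case, and orthogonal to the null eigenvector of $H=J^{-1}-\boldsymbol{\alpha}$ in the heterogeneous case. In the homogeneous case at $\beta=1$ this is completely explicit: since $\mathbb{H}$ depends only on $S_{1}+\cdots+S_{M}$, conditionally on the total magnetisation the configuration is uniform on the corresponding level set, and the standard CLT for sampling without replacement applied to the number of positive spins in group~$1$ delivers a Gaussian limit; the weights $\sqrt{\alpha_{2}/\alpha_{1}}$ and $-\sqrt{\alpha_{1}/\alpha_{2}}$ are precisely those that make the asymptotic variance equal to $1$, reflecting the identity $(\alpha_{2}/\alpha_{1})N_{1}+(\alpha_{1}/\alpha_{2})N_{2}=N$. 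For the heterogeneous case I would use a Hubbard-Stratonovich rewriting of $\mathbb{P}$, diagonalise $H$ into its null and strictly positive eigenvectors, and argue that along the positive eigenvector the joint measure is asymptotically Gaussian via Laplace's method; the variance $1+(L_{1}-\alpha_{1})/\alpha_{1}+(L_{2}-\alpha_{2})/\alpha_{2}$ then appears as the sum of the intrinsic per-spin contribution and the inverse of the restricted quadratic form.

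The hardest step, I expect, is the heterogeneous Gaussian limit, because it requires asymptotically decoupling two directions that live on genuinely different scales, $N^{3/4}$ and $\sqrt{N}$. The Hubbard-Stratonovich / Laplace route makes this splitting structurally clean, but the key technical point still to verify is that the slow, quartic fluctuations along the null eigenvector do not contaminate the fast, Gaussian direction in the limit, i.e., that the relevant mixed moments factorise asymptotically. Once this decoupling is established, the remaining steps reduce to a Gaussian integral computation.
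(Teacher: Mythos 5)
Your proposal is correct, but it follows the paper's route only in part, and the comparison is worth recording. The paper proves Theorem \ref{thm:Crit_Lin_Transform} by explicitly departing from the method of moments: it works with characteristic functions under the de Finetti/Hubbard--Stratonovich representation, Taylor-expands $F$ as $\tfrac12[(\gamma_1x-\gamma_2y)^2+\tfrac{\alpha_1}{6}x^4+\tfrac{\alpha_2}{6}y^4]$, and performs the two-scale substitution $u=(\gamma_1x-\gamma_2y)\sqrt N$, $v=(\gamma_1x+\gamma_2y)N^{1/4}$, after which the quartic $v$-factor cancels against the normalisation constant and only the Gaussian $u$-integral survives; it proves only the heterogeneous Gaussian claim in detail and declares the other three analogous. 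Your treatment of the two $\nu_\eta$ claims is different and, in my view, tighter: reducing them to the joint moment convergence already established in the proofs of Theorems \ref{Fluctuations_hom} and \ref{Fluctuations_het} via the binomial expansion is legitimate (the cross-structure $(L_1-\alpha_1)^{Q/2}(L_2-\alpha_2)^{K/2}$ makes the weights cancel exactly as you claim, and I have checked that $2^n=16^{n/4}$ combined with the bracketed constant reproduces $\eta=\tfrac{1}{12}$ and $\eta=\tfrac{1}{192}\bigl(\tfrac{\alpha_1}{(L_1-\alpha_1)^2}+\tfrac{\alpha_2}{(L_2-\alpha_2)^2}\bigr)$); the moment-determinacy of $\nu_\eta$ closes the argument. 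Your homogeneous Gaussian argument via exchangeability and the hypergeometric CLT for $S_1-\alpha_1(S_1+S_2)$ conditional on the total magnetisation is also genuinely different from anything in the paper and is sound, since $S/N\to0$ in probability in the critical regime makes the conditional variance converge to $1$ uniformly enough. For the heterogeneous Gaussian claim your plan coincides with the paper's, and the "key technical point" you flag --- that the quartic fluctuations along the null eigenvector of $H$ do not contaminate the Gaussian direction --- is precisely what the paper's anisotropic rescaling ($\sqrt N$ versus $N^{1/4}$) plus dominated convergence resolves, so that step does need to be carried out but presents no obstruction. The only caution is that your reduction for the $\nu_\eta$ parts uses the convergence of joint moments, not merely the weak convergence stated in Theorems \ref{Fluctuations_hom} and \ref{Fluctuations_het}; this is available because those theorems are themselves proved by the method of moments, but it should be cited as such.
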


The result in Theorem \ref{thm:Crit_Lin_Transform} for homogeneous
coupling matrices shows that the sequence of random variables
\[
\frac{S_{1}}{2\left(\alpha_{1}N_{1}^{3}\right)^{1/4}}+\frac{S_{2}}{2\left(\alpha_{2}N_{2}^{3}\right)^{1/4}}
\]
converges to the same limiting distribution $\nu_{1/12}$ as the sequence
$\left(S_1+S_2\right)/N^{3/4}$. The latter statement is a well-known result (see e.g.\!
Theorem V.\! 9.\! 5 in \cite{Ellis}), and Theorem \ref{thm:Crit_Lin_Transform}
says that the same limiting distribution is obtained by summing the
two suitably scaled group magnetisations $S_{1}$ and $S_{2}$.

\subsection{Low Temperature Results}

For homogeneous coupling matrices, we have a limit theorem similar
to the single-group case. As in the single-group model, we need to
solve the so called Curie-Weiss equation
\begin{align}\label{eq:CWeq}
\tanh(\beta t)=t.
\end{align}
For $\beta\leq1$, this equation has a single solution which is $t=0$.
For $\beta>1$, there are three different solutions: $-t_{1},0,t_{1}$
with $t_{1}>0$.
\begin{defn}
We define $m\left(\beta\right)$ as 0 if $\beta\leq1$ and as $t_{1}$
if $\beta>1$.
\end{defn}

\begin{thm}
\label{thm:LLN_hom_low_temp}For homogeneous coupling matrices, in
all regimes, we have
\[
\left(\frac{S_{1}}{N_{1}},\ldots,\frac{S_{M}}{N_{M}}\right)\underset{N\to\infty}{\Longrightarrow}\frac{1}{2}\left(\delta_{(-m(\beta),\ldots,-m(\beta))}+\delta_{(m(\beta),\ldots,m(\beta))}\right).
\]
\end{thm}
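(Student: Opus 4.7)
The structural observation that drives the proof is that for a homogeneous coupling matrix $J=(\beta)_{\lambda,\mu}$, the Hamiltonian collapses to a function of the total magnetisation alone: writing $S:=S_{1}+\cdots+S_{M}$,
\[
\mathbb{H}(x)=-\frac{\beta}{2N}\Bigl(\sum_{\lambda,i}x_{\lambda i}\Bigr)^{2}=-\frac{\beta}{2N}S^{2}.
\]
Consequently $\mathbb{P}$ is invariant under every permutation of the $N$ spin indices, and the multi-group model is merely a relabelling of the classical single-group Curie-Weiss model on $N$ spins at inverse temperature $\beta$. The plan is to exploit this reduction together with the single-group low-temperature LLN, then upgrade to joint convergence via exchangeability.

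First I would invoke the classical low-temperature LLN for the single-group CWM: for $\beta>1$,
\[
\frac{S}{N}\underset{N\to\infty}{\Longrightarrow}\tfrac{1}{2}\bigl(\delta_{-m(\beta)}+\delta_{m(\beta)}\bigr),
\]
with $m(\beta)$ the largest root of the Curie-Weiss equation (Proposition \ref{prop:CW_equation}). In keeping with the method-of-moments theme of the paper, one establishes this by computing $\mathbb{E}[(S/N)^{K}]$ via a Hubbard-Stratonovich / Laplace analysis of the partition function and showing that, as $N\to\infty$, the only surviving contributions come from the two global minima at $\pm m(\beta)$ of the free-energy functional $F_{\beta}(t)=\tfrac{\beta}{2}t^{2}-\log\cosh(\beta t)$; the stationary point at the origin is exponentially suppressed.

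To upgrade this to joint convergence of $(S_{\lambda}/N_{\lambda})_{\lambda}$, I would show that each group magnetisation concentrates around the overall one. By full exchangeability, conditional on $S=s$ the configuration is uniform on $\{x\in\{-1,1\}^{N}:\sum_{i}x_{i}=s\}$, so $S_{\lambda}$ is distributed as the sum of $N_{\lambda}$ samples drawn \emph{without replacement} from an urn containing $(N+s)/2$ ones and $(N-s)/2$ minus-ones. In particular
\[
\mathbb{E}[S_{\lambda}\mid S]=\frac{N_{\lambda}}{N}\,S,\qquad \operatorname{Var}(S_{\lambda}\mid S)\leq N_{\lambda},
\]
so by Chebyshev $S_{\lambda}/N_{\lambda}-S/N\to 0$ in $L^{2}$ (and hence in probability), uniformly in $S$. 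Combined with the weak limit for $S/N$ and the continuous mapping theorem, this yields the claimed two-atom limit, since each component necessarily inherits the \emph{same} sign as the total magnetisation.

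The main obstacle is the single-group low-temperature LLN itself, which demands a careful saddle-point argument to pin down the mass at $\pm m(\beta)$ and rule out any leakage from the local maximum at $0$; everything else is a routine exchangeability / hypergeometric concentration argument. As a stylistic variant fitting the rest of the paper, one could instead compute the joint moments $\mathbb{E}\bigl[\prod_{\lambda=1}^{M}(S_{\lambda}/N_{\lambda})^{K_{\lambda}}\bigr]$ directly, reduce them via exchangeability to moments of $S/N$ plus hypergeometric correction terms that vanish in the limit, and match them against the moments $\mathbf{1}_{K\,\text{even}}\,m(\beta)^{K}$ of the target mixture, where $K=\sum_{\lambda}K_{\lambda}$.
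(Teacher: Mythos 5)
Your proof is correct, but it takes a genuinely different route from the paper. The paper stays entirely within its method-of-moments machinery: it first derives the asymptotic correlations $\mathbb{E}\left(X_{11}\cdots X_{Mk_{M}}\right)\approx m(\beta)^{k_{1}+\cdots+k_{M}}$ (Theorem \ref{thm:EXY-hom}) via the de Finetti representation and a two-minima Laplace analysis of $F_{\beta}$, and then sums these correlations over multiindices using the profile-vector combinatorics of Proposition \ref{thm:comb-coeff-multiindex}, showing that only the profiles with all indices distinct survive, so that $M_{K_{1},\ldots,K_{M}}\to m(\beta)^{K}$ for $K$ even and $0$ for $K$ odd --- precisely the moments of the two-point mixture. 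You instead reduce the multi-group homogeneous model to the classical single-group CWM on $N$ spins (valid, since $\mathbb{H}$ depends only on the total magnetisation, making $\mathbb{P}$ fully exchangeable), import the single-group low-temperature LLN for $S/N$, and then upgrade to joint convergence by a conditional hypergeometric concentration bound: given $S$, each $S_{\lambda}$ is a sum of draws without replacement, so $\operatorname{Var}(S_{\lambda}\mid S)\leq N_{\lambda}$ and $S_{\lambda}/N_{\lambda}-S/N\to0$ in probability, uniformly in $S$; the converging-together lemma then gives the diagonal two-atom limit with matching signs across groups. Your route is more elementary on the combinatorial side and makes the mechanism (all groups track the global magnetisation) transparent, but it hinges on full exchangeability and so is specific to homogeneous coupling; the paper's heavier profile machinery is the price it pays for a uniform treatment that carries over verbatim to the heterogeneous case (Theorem \ref{thm:LLN_het_low_temp}) and to the other regimes. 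Note also that the single-group result you invoke still requires essentially the same saddle-point analysis the paper performs (Propositions \ref{prop:Laplace} and \ref{prop:extrema_F_beta}), so no analytic work is actually avoided, only the bookkeeping for joint moments.
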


As far as we know, there is no limit theorem for the low temperature
regime for heterogeneous coupling matrices in the most general case.
Instead, there are results for some special cases, such as $M=2$
(see \cite{KT2}) and $M>2$ with groups of equal size (see \cite{KLSS}).
The main difficulty in the low temperature regime is the analysis
of the properties of the function $F$ defined in (\ref{eq:F}).
If the coupling matrix $J$ has strictly positive entries, then it
is clear that the global minima of $F$ are located in the positive
and negative orthant. What is not known is the number of minima of
$F$ in each orthant. If we assume that there is at most a single
local minimum in each orthant -- which holds for special cases $M=2$ and groups of equal size
mentioned above -- then we can state a limit theorem.
\begin{thm}
\label{thm:LLN_het_low_temp}For heterogeneous coupling matrices with
positive entries, assume the function $F$ defined in (\ref{eq:F})
has at most one local minimum in each orthant, and let $\bar{m}\in\IR^{M}$
be the local minimum found in the positive orthant. We set $m:=\tanh{\bar{m}}$, applying the $\tanh$ function componentwise. Then, in the low
temperature regime, we have
\[
\left(\frac{S_{1}}{N_{1}},\ldots,\frac{S_{M}}{N_{M}}\right)\underset{N\to\infty}{\Longrightarrow}\frac{1}{2}\left(\delta_{-m}+\delta_{m}\right).
\]
\end{thm}
Note that above $m$ is a point in $\IR^M$, as opposed to the previous theorem concerning homogeneous coupling matrices.

Other limit theorems of the same type hold for different assumptions
on the signs of the non-diagonal entries of $J$.

The previous theorem says that in the low temperature regime the magnetisations are significant in the sense that the Law of Large Numbers as in Theorems \ref{thm:LLN_high_temp} and \ref{thm:LLN_crit_reg} does not hold here. We can investigate the fluctuations around the two points of concentration $\pm m$ which turn out to be normally distributed under suitable scaling. This is the subject of the next theorem, which is a conditional CLT for the low temperature regime.

\begin{thm}
\label{thm:cond_CLT}Assume one of the following assumptions hold:
\begin{enumerate}
\item Let $J$ be a homogeneous coupling matrix.
\item Let $J$ be a heterogeneous coupling matrix with positive entries
and let $M=2$.
\item Let $J$ be a heterogeneous coupling matrix with positive entries,
and assume the function $F$ defined in \ref{eq:F} has at most one local
minimum in each orthant of $\IR^{M}$.
\end{enumerate}
We set $m:=\left(m\left(\beta\right),\ldots,m\left(\beta\right)\right)$
if $J$ is homogeneous and $m:=\tanh\bar{m}$ if $J$ is heterogeneous
and $\bar{m}$ the minimum of $F$ in the positive orthant.

Then, conditioning on $S_{\nu}>0$ for all groups $\nu$, we have
\[
\left(\frac{1}{\sqrt{N_{1}}}\sum_{i_{1}=1}^{N_{1}}\left(X_{1i_{1}}-m_{1}\right),\ldots,\frac{1}{\sqrt{N_{M}}}\sum_{i_{M}=1}^{N_{M}}\left(X_{Mi_{M}}-m_{M}\right)\right)\underset{N\to\infty}{\Longrightarrow}\mathcal{N}(0,E).
\]
Similarly, under the condition $S_{\nu}<0$ for all groups $\nu$,
we have
\[
\left(\frac{1}{\sqrt{N_{1}}}\sum_{i_{1}=1}^{N_{1}}\left(X_{1i_{1}}+m_{1}\right),\ldots,\frac{1}{\sqrt{N_{M}}}\sum_{i_{M}=1}^{N_{M}}\left(X_{Mi_{M}}+m_{M}\right)\right)\underset{N\to\infty}{\Longrightarrow}\mathcal{N}(0,E).
\]
The covariance matrix $E$ is the same in both limiting distributions, and
\[
E=\textup{diag}\left(1-m_{\lambda}^{2}\right)+\sqrt{\boldsymbol{\alpha}}H^{-1}\left(\bar{m}\right)\sqrt{\boldsymbol{\alpha}},
\]
where $H^{-1}\left(\bar{m}\right)$ is the inverse of the Hessian
matrix of $F$ at $\pm\bar{m}$.
\end{thm}

In the remainder of this paper, we will prove the above results by
the method of moments. The structure of most of these proofs is the
following:
\begin{enumerate}
\item By expanding the moments
\begin{align*}
   \IE\left[\left(\sum_{i_{1}=1}^{N_{1}}X_{1 i_{1}}\right)^{K_{1}}\left(\sum_{i_{2}=1}^{N_{2}}X_{2 i_{2}}\right)^{K_{2}}\cdots
   \left(\sum_{i_{M}=1}^{N_{M}}X_{Mj_{M}}\right)^{K_{M}}\right],
\end{align*}
we obtain a huge sum with correlations of the form
\begin{align*}
   \mathbb{E}\left(\prod_{\nu=1}^{M}\;\prod_{k_{\nu}=1}^{K_{\nu}} X_{\nu\, i_{\nu k_{\nu}}}\right)~=~  \mathbb{E}\left(X_{1\,i_{11}}\cdots X_{1i_{1\,K_{1}}} \cdots X_{M\,i_{M1}}\cdots X_{M\,i_{MK_{M}}}\right)\,.\label{eq:moments}
\end{align*}
\item We calculate these correlations asymptotically
for large $N$:
\begin{enumerate}
\item First we use a Hubbard-Stratonovich transformation to express probabilities
given by the Curie-Weiss measure $\mathbb{P}$ as an integral: for
the single-group model, this transformation consists of
\[
\exp\left(\frac{\beta}{2N}(\sum_{i=1}^{N}X_{i})^{2}\right)=\frac{1}{\sqrt{2\pi}}\int_{\mathbb{R}}\exp\left(-\frac{y^{2}}{2}\right)\exp\left(y\sqrt{\frac{\beta}{N}}\sum_{i=1}^{N}X_{i}\right)\text{d}y.
\]
\item Then we employ Laplace's method to estimate the above integral for large $N$.
\end{enumerate}
\item Finally, we calculate the moments of the normalised $\boldsymbol{S}$, and prove they converge to the claimed limit.
\end{enumerate}

\section{\label{sec:Combinatorial-Concepts}Combinatorial Concepts}

Whenever we use the method of moments, we will have to evaluate sums
of the form
\begin{align*}
&\IE\left[\left(\sum_{i_{1}=1}^{N_{1}}X_{1 i_{1}}\right)^{K_{1}}\left(\sum_{i_{2}=1}^{N_{2}}X_{2 i_{2}}\right)^{K_{2}}\cdots
   \left(\sum_{i_{M}=1}^{N_{M}}X_{Mj_{M}}\right)^{K_{M}}\right]\\
   =~&\sum_{i_{11},\ldots,i_{1K_{1}}}\;\sum_{i_{21},\ldots,i_{2K_{2}}}\;\ldots\; \sum_{i_{M1},\ldots,i_{MK_{M}}}\;\mathbb{E}\left(\prod_{\nu=1}^{M}\;\prod_{k_{\nu}=1}^{K_{\nu}} X_{\nu\, i_{\nu k_{\nu}}}\right).
\end{align*}
To do the book-keeping for these huge sums we introduce a few combinatorial
concepts taken from \cite{MM}. Let $|A|$ stand for the cardinality
of the set $A$.
\begin{defn}
Let $L$ be a natural number. We define a multiindex $\underline{i}=(i_{1},i_{2},\ldots,i_{L})\in\{1,2,\ldots,N\}^{L}$.
\begin{enumerate}
\item For $j\in\{1,2,\ldots,N\}$, we set
\[
\nu_{j}(\underline{i}):=|\{k\in\{1,2,\ldots,L\}\,|\,i_{k}=j\}|.
\]
\item For $\ell=0,1,\ldots,L$, we define
\[
\rho_{\ell}(\underline{i}):=|\{j\,|\,\nu_{j}(\underline{i})=\ell\}|
\]
and
\[
\underline{\rho}(\underline{i}):=(\rho_{1}(\underline{i}),\ldots,\rho_{L}(\underline{i})).
\]
\end{enumerate}
\end{defn}

The numbers $\nu_{j}(\underline{i})$ represent the multiplicity of
each index $j\in\{1,2,\ldots,N\}$ in the multiindex $\underline{i}$,
and $\rho_{\ell}(\underline{i})$ represents the number of indices
in $\underline{i}$ that occur exactly $\ell$ times. We shall call
$\underline{\rho}(\underline{i})$ the profile of the multiindex $\underline{i}$.
\begin{lem}
\label{lem:sum-l-rho}For all $\underline{i}=(i_{1},i_{2},\ldots,i_{L})\in\{1,2,\ldots,N\}^{L}$,
we have $\sum_{\ell=1}^{L}\ell\rho_{\ell}(\underline{i})=L$.
\end{lem}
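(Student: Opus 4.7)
The plan is to prove the identity by double counting the entries of the multiindex $\underline{i}$. The left-hand side $\sum_{\ell=1}^{L}\ell\rho_{\ell}(\underline{i})$ counts, for each $\ell \ge 1$, the total number of occurrences in $\underline{i}$ that are contributed by indices having exactly multiplicity $\ell$: there are $\rho_\ell(\underline{i})$ such indices, and each accounts for $\ell$ slots of the tuple. Summing over all possible multiplicities recovers the total number of slots, which is $L$.

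Concretely, I would start from the trivial observation that the multiindex has exactly $L$ components, so
\[
L=\sum_{j=1}^{N}\nu_{j}(\underline{i}),
\]
since partitioning $\{1,2,\ldots,L\}$ according to the value $i_k$ gives a cell of size $\nu_j(\underline{i})$ for each $j\in\{1,\ldots,N\}$. Then I would rewrite this sum by grouping the indices $j$ according to their multiplicity. For each $\ell\in\{0,1,\ldots,L\}$, the set $\{j : \nu_j(\underline{i})=\ell\}$ has cardinality $\rho_\ell(\underline{i})$ by definition, and every $j$ in this set contributes exactly $\ell$ to the sum. Hence
\[
\sum_{j=1}^{N}\nu_{j}(\underline{i})=\sum_{\ell=0}^{L}\ell\,\rho_{\ell}(\underline{i})=\sum_{\ell=1}^{L}\ell\,\rho_{\ell}(\underline{i}),
\]
the last equality because the $\ell=0$ term vanishes. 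Combining the two displays yields the claim.

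There is no real obstacle here: the statement is essentially the definition of the profile rearranged as a partition of $L$. The only point to be careful about is to make sure the reindexing from ``sum over indices $j$'' to ``sum over multiplicities $\ell$'' is justified by the fact that the sets $\{j:\nu_j(\underline{i})=\ell\}$ for $\ell=0,1,\ldots,L$ form a disjoint partition of $\{1,\ldots,N\}$, which is immediate since every $j$ has a unique multiplicity $\nu_j(\underline{i})\in\{0,1,\ldots,L\}$.
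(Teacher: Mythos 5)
Your double-counting argument is correct and complete; the paper itself states this lemma without proof as a basic property of profiles, and your argument (writing $L=\sum_{j}\nu_{j}(\underline{i})$ and then grouping the indices $j$ by their common multiplicity $\ell$) is exactly the standard justification one would supply.
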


We use this basic property of profiles to define
\begin{defn}
\label{def:profile_vector}Let $\underline{r}=(r_{1},\ldots,r_{L})$
be such that $\sum_{\ell=1}^{L}\ell r_{\ell}=L$ hold. We call $\underline{r}$
a profile vector. We define
\[
w_{L}(\underline{r})=\left|\{\underline{i}\in\{1\ldots,N\}^{L}\,|\,\underline{\rho}(\underline{i})=\underline{r}\}\right|
\]
to represent the number of multiindices $\underline{i}$ that have
a given profile vector $\underline{r}$.
\end{defn}

We now define the set of all profile vectors for a given $L\in\mathbb{N}$.
\begin{defn}
\label{def:profile_sets}Let $\Pi^{(L)}=\left\{ \underline{r}\in\{0,1,\ldots,L\}^{L}\,|\,\sum_{\ell=1}^{L}\ell r_{\ell}=L\right\} $.
Some important subsets of $\Pi^{(L)}$ are $\Pi_{k}^{(L)}=\left\{ \underline{r}\in\Pi^{(L)}\,|\,r_{1}=k\right\} $,
$\Pi^{0(L)}=\left\{ \underline{r}\in\Pi^{(L)}\,|\,r_{\ell}=0\text{ for all }\ell\geq3\right\} $,
and $\Pi^{+(L)}=\left\{ \underline{r}\in\Pi^{(L)}\,|\,r_{\ell}>0\text{ for some }\ell\geq3\right\} $.
We can also combine superscripts and subscripts. Then we have, e.g.,
$\Pi_{0}^{0(L)}=\left\{ \underline{r}\in\Pi^{(L)}\,|\,r_{\ell}=0\text{ for all }\ell\neq2\right\} $.
\end{defn}

We shall write for any $\underline{i}\in\{1,2,\ldots,N\}^{L}$ $X_{\underline{i}}=X_{i_{1}}\cdots X_{i_{L}}$.
For any $\underline{r}\in\Pi^{(L)}$, let $\underline{j}\in\{1,2,\ldots,N\}^{L}$
be such that $\underline{\rho}\left(\underline{j}\right)=\underline{r}$.
Then we let $X_{\underline{r}}$ stand for $X_{\underline{j}}$. This
definition is not problematic if we are only interested in the expectation
\[
\mathbb{E}\left(X_{\underline{r}}\right)=\mathbb{E}\left(X_{\underline{j}}\right),
\]
and the random variables $X_{1},\ldots,X_{N}$ are exchangeable.

If there are $M$ sets $\{1,2,\ldots,N_{\nu}\}^{L_{\nu}}$, and for
each $\nu$ $\underline{i_{\nu}}\in\{1,2,\ldots,N_{\nu}\}^{L_{\nu}}$,
then we set $\text{\ensuremath{\underbar{i}}\,:=\ensuremath{\left(\underline{i_{\nu}}\right)}}$
and write $X_{\underbar{i}}$ for $X_{\underline{i_{1}}}\cdots X_{\underline{i_{M}}}$.
Similarly, if we have profile vectors $\underline{r_{\nu}}\in\Pi^{(L_{\nu})}$,
and $\text{\ensuremath{\underline{j_{\nu}}\in}\{1,2,\ensuremath{\ldots},\ensuremath{N_{\nu}\}^{L_{\nu}}}}$
such that $\underline{\rho}\left(\underline{j_{\nu}}\right)=\underline{r_{\nu}}$,
then we write $X_{\underbar{r}}$ for $X_{\underline{j_{1}}}\cdots X_{\underline{j_{M}}}$.
\begin{prop}
\label{thm:comb-coeff-multiindex}For $\underline{r}\in\Pi^{(L)}$,
set $r_{0}:=N-\sum_{\ell=1}^{L}r_{\ell}$. Then
\[
w_{L}(\underline{r})=\frac{N!}{r_{1}!r_{2}!\ldots r_{L}!r_{0}!}\frac{L!}{1!^{r_{1}}2!^{r_{2}}\cdots L!^{r_{L}}}.
\]
If we let $N$ go to infinity, then we have
\[
w_{L}(\underline{r})\approx\frac{N^{\sum_{l=1}^{L}r_{l}}}{r_{1}!r_{2}!\ldots r_{L}!}\frac{L!}{1!^{r_{1}}2!^{r_{2}}\cdots L!^{r_{L}}}.
\]
\end{prop}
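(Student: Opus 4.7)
The plan is to count $w_{L}(\underline{r})$ in two independent stages: first decide which elements of $\{1,\ldots,N\}$ fall into each multiplicity class, then distribute the $L$ coordinate slots of the multiindex among those elements. The argument is purely combinatorial, and the asymptotic formula will follow from an elementary expansion of a falling factorial.

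In the first stage I would partition $\{1,\ldots,N\}$ into ordered disjoint blocks $A_{0},A_{1},\ldots,A_{L}$ with $|A_{\ell}|=r_{\ell}$, where $A_{\ell}$ is the set of indices that appear exactly $\ell$ times in $\underline{i}$ (the block $A_{0}$ collects indices that do not appear at all). Since the label $\ell$ distinguishes the blocks and $\sum_{\ell=0}^{L}r_{\ell}=N$ by definition of $r_{0}$, the number of such ordered partitions is the multinomial
\[
\binom{N}{r_{0},r_{1},\ldots,r_{L}}=\frac{N!}{r_{0}!\,r_{1}!\cdots r_{L}!}.
\]
In the second stage, with the $A_{\ell}$ fixed, I would count the arrangements of the $L$ positions consistent with the chosen multiplicities. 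For each distinct index $j\in A_{\ell}$ I must pick the $\ell$ positions in $\underline{i}$ that receive the value $j$. These blocks of positions are distinguishable across different elements of $A_{\ell}$ (they correspond to different values of $j$) but interchangeable within a single block (all positions there receive the same value $j$). Lemma \ref{lem:sum-l-rho} guarantees that the block sizes total exactly $L$, so the number of valid assignments is the multinomial
\[
\frac{L!}{\prod_{\ell=1}^{L}(\ell!)^{r_{\ell}}}=\frac{L!}{1!^{r_{1}}\,2!^{r_{2}}\cdots L!^{r_{L}}}.
\]
Multiplying the two stages yields the claimed exact formula for $w_{L}(\underline{r})$.

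For the asymptotic statement I would simply observe that
\[
\frac{N!}{r_{0}!}=N(N-1)\cdots\bigl(N-\textstyle\sum_{\ell=1}^{L}r_{\ell}+1\bigr)
\]
is a product of exactly $\sum_{\ell\geq 1}r_{\ell}$ factors, each equal to $N\bigl(1+O(1/N)\bigr)$ as $N\to\infty$ with $L$ (and hence $\underline{r}$) fixed. Substituting this into the exact expression produces the second displayed formula. There is no real obstacle in the proof; the only point that merits explicit attention is that in the second stage the $r_{\ell}$ chosen indices of multiplicity $\ell$ are genuinely distinct elements of $\{1,\ldots,N\}$, so they give rise to $r_{\ell}$ distinguishable blocks of coordinate positions rather than one indistinguishable block of size $\ell r_{\ell}$, and it is this distinction that produces the factor $(\ell!)^{r_{\ell}}$ (and not $(\ell r_{\ell})!$) in the denominator.
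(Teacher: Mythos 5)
Your proof is correct. The paper itself does not supply an argument for this proposition but instead cites Theorem 3.14 and Corollary 3.18 of the survey \cite{MM}; your two-stage count (first assigning multiplicities to elements of $\{1,\ldots,N\}$ via the multinomial $N!/(r_{0}!r_{1}!\cdots r_{L}!)$, then distributing the $L$ coordinate slots via $L!/\prod_{\ell}(\ell!)^{r_{\ell}}$) is exactly the standard argument behind those results, and your remark on why the denominator carries $(\ell!)^{r_{\ell}}$ rather than $(\ell r_{\ell})!$ correctly addresses the one point where such a count could go wrong. The asymptotic step, expanding the falling factorial $N!/r_{0}!$ into $\sum_{\ell\geq1}r_{\ell}$ factors each of size $N(1+O(1/N))$, is likewise sound for fixed $L$.
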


This proposition is based on Theorem 3.14 and Corollary 3.18 in \cite{MM}.

\section{\label{sec:Correlations} Correlations}

In this section, we compute correlations of the form
\begin{align}
   \mathbb{E}\left(\prod_{\nu=1}^{M}\;\prod_{k_{\nu}=1}^{K_{\nu}} X_{\nu\,i_{\nu\,k_{\nu}}}\right)~=~  \mathbb{E}\left(X_{1\,i_{11}}\cdots X_{1i_{1\,K_{1}}} \cdots X_{M\,i_{M1}}\cdots X_{M\,i_{MK_{M}}}\right)\,,\label{eq:moments}
\end{align}
where for $\nu=1,\ldots,M $ the sequence $i_{\nu k_{\nu}}$ addresses $K_{\nu} $ spins in the group $\nu $.

Since $X_{\nu i}^{2}=1 $, it is enough to compute \eqref{eq:moments} for pairwise distinct index pairs $(\nu,i_{\nu j})_{\nu=1,\ldots,M\;j=1,\ldots,K_{\nu}} $.

From the definition in \eqref{def:model}, it is clear that the $X_{\nu i}$ are exchangeable \emph{within groups}, i.e.\! for
pairwise distinct index pairs, the random vectors
\begin{align}\notag
   \left(X_{1\,i_{11}},\ldots, X_{1i_{1\,K_{1}}} ,\;\ldots\;, X_{M\,i_{M1}},\ldots, X_{M\,i_{MK_{M}}}\right)
   \quad\text{and}\quad
   \left(X_{1\,1}, \ldots, X_{1\,K_{1}} ,\;\ldots\;, X_{M\,1},\ldots, X_{M\,K_{M}}\right)
\end{align}
have the same distributions. Consequently, for these indices
\begin{align}\label{eq:correlation}
   m_{K_{1},\ldots,K_{M}}~:=~\mathbb{E}\left(\prod_{\nu=1}^{M}\;\prod_{k_{\nu}=1}^{K_{\nu}} X_{\nu\,i_{\nu\,k_{\nu}}}\right)
   ~=~\IE\left(\prod_{\nu=1}^{M}\;\prod_{i_{\nu}=1}^{K_{\nu}} X_{\nu\,i_\nu}\right).
\end{align}
So the identity of the specific indices chosen from each group does not affect the correlation; only the number of different indices matters.
The correlations differ according to the class of coupling matrix
and the regime. We first show how to calculate the correlation for
homogeneous coupling. These results have been known for a while and
can also be used to calculate moments in the single-group model.

\subsection{Homogeneous Coupling}

When there is homogeneous coupling, \emph{all} random variables are exchangeable and the (joint) distribution of the $X_{\nu i}$ is actually the same as the distribution of a single-group CWM with $N=\sum_\nu N_{\nu}$.

To calculate the correlations \eqref{eq:correlation} in this case, we may therefore use estimates known for the single-group model.

\begin{defn}\label{def:appr}
 Real-valued sequences $f_{N},g_{N}$ are called \emph{asymptotically equal} (as $N\to\infty$), in short $f_{N}\approx g_{N}$, if
 \begin{align*}
    \lim_{N\rightarrow\infty}\frac{f_{N}}{g_{N}}~=~1.
 \end{align*}
\end{defn}
The next theorem gives asymptotic expressions for the expectations.
\begin{thm}
\label{thm:EXY-hom}For homogeneous coupling matrices, the expectations
\begin{align*}
   m_{k_{1},\ldots,k_{M}}~=~\mathbb{E}\left(X_{11}\cdots X_{1k_{1}}\cdots X_{M1}\cdots X_{Mk_{M}}\right)
\end{align*}
are asymptotically equal to:
\begin{enumerate}
\item if $\beta<1$,
\[
(k_{1}+\cdots+k_{M}-1)!!\left(\frac{\beta}{1-\beta}\right)^{\frac{k_{1}+\cdots+k_{M}}{2}}\frac{1}{N^{\frac{k_{1}+\cdots+k_{M}}{2}}};
\]
\item if $\beta=1$,
\[
12^{\frac{k_{1}+\cdots+k_{M}}{2}}\frac{\Gamma\left(\frac{k_{1}+\cdots+k_{M}+1}{4}\right)}{\Gamma\left(\frac{1}{4}\right)}\frac{1}{N^{\frac{k_{1}+\cdots+k_{M}}{4}}};
\]
\item if $\beta>1$,
\[
m(\beta)^{k_{1}+\cdots+k_{M}}\,,
\]
where $m(\beta)$ is the unique strictly positive solution of \eqref{eq:CWeq};
\end{enumerate}
\emph{provided} $k_{1}+\cdots+k_{M}$ is even.
If $k_{1}+\cdots+k_{M}$ is odd, then $m_{k_{1},\ldots,k_{M}}=0$
for all $\beta\geq0$.
\end{thm}

\begin{proof}
This is Theorem 4.3 in \cite{WK-M}.
\end{proof}

\subsection{Heterogeneous Coupling}

When there is heterogeneous coupling, only the random variables within
each group $X_{\lambda i}$ are exchangeable. The case $M=2$ and
high temperature was analysed in a similar fashion in \cite{KT2}.

In the following, we set
\begin{align*}
\mathbf{x}:=(x_{11},\ldots,x_{1N_{1}}, \ldots,x_{MN_{M}})\in\{ -1,1 \}^{N}, \quad \quad
s_{\lambda}(\mathbf{x})~:=~\sum_{i=1}^{N_{\lambda}}x_{\lambda i},\quad \text{and}\quad s(\bx):=\left(s_{1},\ldots,s_{M}\right)^{T}.
\end{align*}
We calculate the correlations by adapting Laplace's method. The assumption
that the coupling matrix is positive definite is of key importance for the Hubbard-Stratonovich
transformation:
\begin{prop}
[Multidimensional Hubbard-Stratonovich]Let $J$ be a heterogeneous
coupling matrix, and\\ $\bx=\left(x_{11},\ldots,x_{MN_{M}}\right)\in\{ -1,1 \}^{N}$ a spin
configuration.  Then
\begin{align}\label{eq:HS}
e^{-\mathbb{H}(\bx)} & =e^{\frac{s(\bx)^{T}Js(\bx)}{2N}}=
\frac{\sqrt{\det J}}{(2\pi N)^{\frac{M}{2}}}\int_{\mathbb{R}^{M}}e^{-\frac{u^{T}Ju}{2N}}e^{\frac{u^{T}J\,s(\bx)}{N}}\mathrm{d}u.
\end{align}
\end{prop}

\begin{proof}
This is a straightforward calculation.
\end{proof}

We change variables $y:=N^{-1}Ju$ in \eqref{eq:HS}. Recall that we write $L$ for
$J^{-1}$, and we obtain
\begin{align}\label{eq:HS2}
e^{-\mathbb{H}(\bx)} =\frac{N^{\frac{M}{2}}}{(2\pi)^{\frac{M}{2}}\sqrt{\det J}}\int_{\mathbb{R}^{M}}e^{-\frac{N}{2}\,(y^{T}L\,y)}\,e^{y^{T}s(\bx)}\,\mathrm{d}y.
\end{align}
Let us define
\begin{align*}
   \mathcal{Z}_{k_{1},\ldots,k_{M}}~
   :=~&\sum_{\bx\in\{ -1,1 \}^{N}}~\left[\prod_{i_{1}=1}^{k_{1}}x_{1i_{1}}\right]
   \cdots\left[\prod_{i_{M}=1}^{k_{M}}x_{Mi_{M}}\right]~e^{-\mathbb{H}(\bx)}.
\end{align*}
Observe that
\begin{align*}
  \frac{\mathcal{Z}_{k_{1},\ldots,k_{M}}}{\mathcal{Z}_{0,\ldots,0}}~=~\mathbb{E}\left(X_{11}\cdots X_{1k_{1}}\cdots X_{M1}\cdots X_{Mk_{M}}\right)
   ~=~m_{k_{1},\ldots,k_{M}}.
\end{align*}
Using \eqref{eq:HS2} and letting $c_N$ stand for the factor multiplying the integral in \eqref{eq:HS2}, we compute
\begin{align*}
   \mathcal{Z}_{k_{1},\ldots,k_{M}}~&=~c_{N}\;\int_{\IR^{M}} e^{-\frac{N}{2}y^{T}L\,y}\;\sum_{\bx\in\{ -1,1 \}^{N}}
   \left[\prod_{\lambda=1}^{M}\left(\prod_{i_{\lambda}=1}^{k_{\lambda}}x_{\lambda\, i_{\lambda}}\right)\right]\,e^{-y^{T}s(\bx)}\,\text{d}y\notag\\
   &=~c_{N}\;\int_{\IR^{M}} e^{\frac{N}{2}y^{T}L\,y}\;\prod_{\lambda=1}^{M}\left[\left(\sum_{x\in\{ -1,1 \}}x\,e^{xy_{\lambda}}\right)^{k_{\lambda}}\,\left(\sum_{x\in\{ -1,1 \}}e^{xy_{\lambda}}\right)^{N_{\lambda}-k_{\lambda}}\right]\;\text{d}y\notag\\
   &=~2^{N}\,c_{N}\;\int_{\IR^{M}}\;\left(\prod_{\lambda=1}^{M}\,\tanh(y_{\lambda})^{k_{\lambda}}\right)\; e^{-\left(\frac{N}{2}y^{T}L\,y-\sum_{\lambda=1}^{M}N_{\lambda}\,\ln\left(\cosh(y_{\lambda})\right)\right)}\;\text{d}y.
\end{align*}

We summarise:
\begin{thm}\label{thm:Finetti}
   Define the functions $F, F_{N}:\IR^{M}\to\IR$ by:
   \begin{align}\label{eq:F}
      F_{N}(y)~&:=~\frac{1}{2}\sum_{\lambda,\nu=1}^{M} J^{-1}_{\lambda \nu}\,y_{\lambda}\,y_{\nu}\;-\;\sum_{\lambda=1}^{M} \;\frac{N_{\lambda}}{N}\ln\,\cosh(y_{\lambda}),\quad
      F(y)~:=~\frac{1}{2}\sum_{\lambda,\nu=1}^{M} J^{-1}_{\lambda \nu}\,y_{\lambda}\,y_{\nu}\;-\;\sum_{\lambda=1}^{M} \;\alpha_{\lambda}\ln\,\cosh(y_{\lambda})\,.
   \end{align}
   Then
   \begin{align}\label{eq:Finetti0}
      m_{k_{1},\ldots,k_{M}}~&=~\mathbb{E}\left(X_{11}\cdots X_{1k_{1}}\cdots X_{M1}\cdots X_{Mk_{M}}\right)\notag\\
      ~&=~      \frac{\displaystyle \int_{\IR^{M}}\;\left(\prod_{\lambda=1}^{M}\,\tanh(y_{\lambda})^{k_{\lambda}}\right)\; e^{-N\,F_{N}(y)}\;\textup{d}y}
      {\displaystyle \int_{\IR^{M}}\; e^{-N\,F_{N}(y)}\;\textup{d}y}
      ~\approx~\frac{\displaystyle \int_{\IR^{M}}\;\left(\prod_{\lambda=1}^{M}\,\tanh(y_{\lambda})^{k_{\lambda}}\right)\; e^{-N\,F(y)}\;\textup{d}y}
      {\displaystyle \int_{\IR^{M}}\; e^{-N\,F(y)}\;\textup{d}y}.
   \end{align}
\end{thm}

Let us define the probability measure $P_{y}$ on $\{ -1,1 \} $ by $P_{y}(1)=\frac{1}{2}(1+\tanh y)$ and its $k$-fold product by $P_{y}^{\otimes k}$. The corresponding expectation is denoted by $E_{y}$ and $E_{y}^{\otimes k}$, respectively.

From Theorem \ref{thm:Finetti}, we easily get a de-Finetti-type representation for the Curie-Weiss measure $\IP$ defined in
\eqref{eq:Pbeta}:

\begin{cor}\label{cor:Finetti}
We have
   \begin{align}
      &\mathbb{P}\left(X_{11}=x_{11}\ldots X_{1N_{1}}=x_{1N_{1}}\ldots X_{M1}\ldots X_{MN_{M}}=x_{MN_{M}}\right)\notag\\
      ~&=~
      \frac{1}{\int_{\IR^{M}}\; e^{-N\,F_{N}(y)}\;\textup{d}y}\;\, \int_{\IR^{M}}\;P_{y_{1}}^{\otimes N_{1}}(x_{11},\ldots,x_{1N_{1}})\cdots P_{y_{M}}^{\otimes N_{M}}(x_{M1},\ldots,x_{MN_{M}})\; e^{-N\,F_{N}(y)}
      \;\textup{d}y\label{eq:Finetti1}\\
      &\approx~\frac{1}{\int_{\IR^{M}}\; e^{-N\,F(y)}\;\textup{d}y}\;\, \int_{\IR^{M}}\;P_{y_{1}}^{\otimes N_{1}}(x_{11},\ldots,x_{1N_{1}})\cdots P_{y_{M}}^{\otimes N_{M}}(x_{M1},\ldots,x_{MN_{M}})\; e^{-N\,F(y)}
      \;\textup{d}y \nonumber
      \intertext{and}
      & m_{k_{1},\ldots,k_{M}}~=~\frac{1}{\int_{\IR^{M}}\; e^{-N\,F_{N}(y)}\;\textup{d}y}\;\, \int_{\IR^{M}}\;\left[E_{y_{1}}^{\otimes N_{1}}\left(\prod_{i=1}^{k_{1}}x_{1i}\right)\right]\cdots \left[ E_{y_{M}}^{\otimes N_{M}}\left(\prod_{i=1}^{k_{M}} x_{Mi}\right)\right] \; e^{-N\,F_{N}(y)}
      \;\textup{d}y\label{eq:Finetti2}\\
      ~&\approx~\frac{1}{\int_{\IR^{M}}\; e^{-N\,F(y)}\;\textup{d}y}\;\, \int_{\IR^{M}}\;\left[E_{y_{1}}^{\otimes N_{1}}\left(\prod_{i=1}^{k_{1}}x_{1i}\right)\right]\cdots \left[ E_{y_{M}}^{\otimes N_{M}}\left(\prod_{i=1}^{k_{M}} x_{Mi}\right)\right] \; e^{-N\,F(y)}
      \;\textup{d}y. \nonumber
   \end{align}
\end{cor}
\begin{proof}
Equation \eqref{eq:Finetti2} is just a rewriting of \eqref{eq:Finetti0}. Equation \eqref{eq:Finetti1} follows since the probability measures involved are determined by their moments.
\end{proof}

Our goal is to apply Laplace's method to evaluate the integrals in \eqref{eq:Finetti0} asymptotically. In order to do so, we have to analyse the critical points of the function $F$. We start our analysis with the high temperature regime.

\subsubsection{High Temperature Regime}
\begin{prop}
\label{prop:pos-def}In the high temperature regime, i.e.\! if the matrix $H=J^{-1}-\boldsymbol{\alpha}=L-\boldsymbol{\alpha}$
is positive definite, the function $F$ defined in \eqref{eq:F}
 has a unique
global minimum at the origin.  The matrix $H$ is the Hessian of $F$ at $0$.
\end{prop}

\begin{proof}
The Hessian of $F$ is given by
\begin{align*}
   H(y)~:=~\left(\frac{\partial^{2}F}{\partial x_{\lambda} \partial x_{\nu}}(y)\right)~=~J^{-1}_{\;\lambda\nu}-\delta_{\lambda\nu}\;\alpha_{\nu}
   \left(1-\tanh\left(y_{\nu}\right)^{2}\right),
\end{align*}
where the symbol $\delta_{\lambda\nu}$ stands for the Kronecker delta.
Since $H=H(0)$ is assumed to be positive definite, by monotonicity of the eigenvalues of $H(y)$ these matrices are positive definite for all $y$. Thus, the function $F$ is strictly convex.

The gradient of $F$ vanishes in the origin. Hence, the origin is a local minimum. By strict convexity, $0$ is the unique minimum.

\end{proof}

\begin{rem}
   The above proof actually shows that the origin is the unique minimum of $F$ even if $H$ is merely positive \emph{semi-definite}. In fact, for positive semi-definite $H$, the Hessian $H(y)$ of $F$ is positive definite for all $y\not=0$.
\end{rem}
\begin{defn}
   If $A$ is a positive definite $M\times M$-matrix and $\mathbf{k}\in \mathbb{N}_{0}^{M}$ is a multiindex,
   we denote by $m_{\mathbf{k}}(A)$ the $\mathbf{k}$-th moment of the multivariate normal distribution $\mathcal{N}(0,A)$, i.e.
   \begin{align*}
      m_{\mathbf{k}}(A)~:=~\frac{1}{\left(2\pi\det A\right)^{M/2}}\,\int_{\IR^{M}}\;x_{1}^{\,k_{1}}\cdots x_{M}^{\,k_{M}}\;e^{-\frac{1}{2}\,(x^{T}A^{-1}x)}\;\text{d}x.
   \end{align*}
\end{defn}
\begin{thm}
\label{thm:EXY-het_high} Let $X_{11},\ldots,X_{1N_{1}},\ldots, X_{M1},\ldots,X_{MN_{M}}$ be a sequence of random variables which are M-group Curie-Weiss distributed with positive definite coupling matrix $J $ and assume that $\lim_{N\to\infty}\frac{N_{\nu}}{N}=\alpha_{\nu}$.

If the model is in the high temperature regime, i.e.\! if the matrix $H=J^{-1}-\boldsymbol{\alpha}$ is positive definite, then
\begin{align*}
   m_{k_{1},\ldots,k_{M}}~:=~\IE\left(X_{11}\cdots X_{1k_{1}}\cdots X_{M1}\cdots X_{Mk_{M}}\right)~\approx~\frac{1}{N^{\frac{k_{1}+\cdots +k_{M}}{2}}}\;
   m_{\mathbf{k}}(H^{-1})\quad \textup{as } N\to\infty.
\end{align*}

\end{thm}

Theorem \ref{thm:EXY-het_high} follows from the following proposition which can be considered a special case of Laplace's method in higher dimension (for background on Laplace's method see for example \cite{wong}).

\begin{prop}\label{prop:Laplace_multivariate}
Suppose $H=J^{-1}-\boldsymbol{\alpha}$ is positive definite. For $\mathbf{k}=(k_{1},\ldots,k_{M})\in \mathbb{N}_{0}^{M}$, $k=\sum k_{\nu}$, and for $F$ as in \eqref{eq:F}, we have
\begin{align*}
   \int_{\IR^{M}}\;\left(\prod_{\lambda=1}^{M}\,\tanh(y_{\lambda})^{k_{\lambda}}\right)\; e^{-N\,F(y)}\;\text{d}y~\approx~
   \frac{1}{N^{(M+k)/2}}\;\int_{\IR^{M}}\,\left(\prod_{\nu=1}^{M} {y_{\nu}}^{k_{\nu}}\right)\;e^{-\frac{1}{2}\,(y^{T}H\,y)}\;\textup{d}y.
\end{align*}
\end{prop}
\begin{proof}[Proof (Theorem \ref{thm:EXY-het_high})]
We assume Proposition \ref{prop:Laplace_multivariate} for the moment. Recall that
\begin{align*}
  m_{k_{1},\ldots,k_{M}}~=~\frac{\mathcal{Z}_{k_{1},\ldots,k_{M}}}{\mathcal{Z}_{0,\ldots,0}}~\approx~
  \frac{\displaystyle\int_{\IR^{M}}\;\left(\prod_{\lambda=1}^{M}\,\tanh(y_{\lambda})^{k_{\lambda}}\right)\; e^{-N\,F(y)}\;\text{d}y}{\displaystyle\int_{\IR^{M}}\; e^{-N\,F(y)}\;\text{d}y}.
\end{align*}
Applying Proposition \ref{prop:Laplace_multivariate} to both the numerator and the denominator in the above expression
and noting that
\begin{align*}
   \int_{\IR^{M}}\;e^{-\frac{1}{2}\,(y^{T}H\,y)}\;\text{d}y~=~\left(2\pi\det (H^{-1})\right)^{M/2}
\end{align*}
 proves the theorem.
\end{proof}
We next prove Proposition \ref{prop:Laplace_multivariate}.

\begin{proof}[Proof (Proposition \ref{prop:Laplace_multivariate})]
Let us use the shorthand notation
\begin{align*}
   \tanh^{\mathbf{k}}({y})~=~\prod_{\lambda=1}^{M}\,\tanh(y_{\lambda})^{k_{\lambda}}\quad\text{and}\quad
   {y}^{\mathbf{k}}~=~\prod_{\nu=1}^{M} y_{\nu}^{k_{\nu}}
\end{align*}
in this proof as well as the remainder of this article.

We compute, by a change of variables,
\begin{align}\label{eq:cov}
   &N^{\frac{k+M}{2}}\,\int_{\IR^{M}}\,\tanh^{\mathbf{k}}({y})~e^{-N\left(\eh y^{T}J^{-1}y\,-\sum \alpha_{\nu}\ln\cosh(y_{\nu})\right)}\dd{y}\notag\\
   =~&N^{\frac{k}{2}}\,\int_{\IR^{M}}\,\tanh^{\mathbf{k}}\left(\frac{x}{\sqrt{N}}\right)\,\;e^{-\left(\eh x^{T}J^{-1}x\,-\sum \alpha_{\nu}\,N\ln\cosh\left(\frac{x_{\nu}}{\sqrt{N}}\right)\right)}\dd{x}\,.
\end{align}
Now, we expand $\ln\cosh(x)$ and $\tanh(x)$ around $x_0=0$ using Taylor's theorem (in Lagrange form):
\begin{align*}
   \ln\cosh(x)~=~\eh\, x^{2}\,-\eh \tanh^{2}(\bar{x})\,x^{2}\quad \textup{and} \quad \tanh(x)~=~x-\frac{\tanh(\tilde{x})}{\cosh^{2}(\tilde{x})}\,x^{2}\,,
\end{align*}
where $\bar{x}$ and $\tilde{x}$ are points between $0$ and $x$.

Inserting this in \eqref{eq:cov} gives
\begin{align}\label{eq:Taylor1}
   \int_{\IR^{M}}\,\prod_{\nu=1}^{M} \left(x_{\nu}-\,\frac{1}{\sqrt{N}}\frac{\tanh(\tilde{x})}{\cosh^{2}(\tilde{x})}\,x_{\nu}^{\,2}\right)^{k_{\nu}}        e^{-\eh\left(y^{T}(J^{-1}-\boldsymbol{\alpha})\,y)\,-\,\frac{1}{2N}\sum \alpha_{\nu}\,\tanh^{2}(\bar{x})\,x^{2}\right)}\dd{x}\,.
\end{align}
The integrand in \eqref{eq:Taylor1} converges to $x^{\bk}e^{-\eh(x^{T} H x)}$. Moreover, the integrand is bounded in absolute value by an expression of the form $C(1+x^{2})^{k}e^{-\gamma x^{2}}$ which is integrable. We may therefore apply the dominated convergence theorem and arrive at the assertion of the proposition.
\end{proof}

\subsubsection{Critical Regime}

Now we turn to the critical regime. We already know from Proposition
\ref{prop:pos-def} that $F$ has a unique global minimum at the origin
in the critical regime as well as in the high temperature regime.
For the proof of this result, see \cite{KT2}.
We remind the reader that we set
\begin{align*}
   L~:=~J^{-1}~=~\left(\begin{array}{cc}
L_{1} & -\bar{L}\\
-\bar{L} & L_{2}
\end{array}\right).
\end{align*}
\begin{thm}
\label{thm:EXY-het_crit}Let $L_{\nu}-\alpha_{\nu}>0$ for both groups
and $(L_{1}-\alpha_{1})(L_{2}-\alpha_{2})=\bar{L}^{2}$. Then, for
all $K,Q\in\mathbb{N}_{0}$, the expected value $\mathbb{E}(X_{11}\cdots X_{1K}X_{21}\cdots X_{2Q})$
is asymptotically given by the expression
\[
\left[\frac{12}{\alpha_{1}(L_{2}-\alpha_{2})^{2}+\alpha_{2}(L_{1}-\alpha_{1})^{2}}\right]^{\frac{K+Q}{4}}(L_{1}-\alpha_{1})^{\frac{Q}{2}}(L_{2}-\alpha_{2})^{\frac{K}{2}}\frac{\Gamma\left(\frac{K+Q+1}{4}\right)}{\Gamma\left(\frac{1}{4}\right)}\frac{1}{N^{\frac{K+Q}{4}}}
\]
if $K+Q$ is even and $0$ otherwise.
\end{thm}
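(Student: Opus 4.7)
The plan is to adapt the Hubbard--Stratonovich / Laplace argument used for Theorem \ref{thm:EXY-het_high} to handle the degenerate Hessian that appears at criticality. Starting from the de Finetti representation of Proposition \ref{prop:de_Finetti_het}, the correlation takes the form
\[
\mathbb{E}\bigl(X_{11}\cdots X_{1K} X_{21}\cdots X_{2Q}\bigr) = \frac{\int_{\mathbb{R}^{2}} e^{-NF(y)} \tanh^{K}(y_{1})\tanh^{Q}(y_{2})\, dy}{\int_{\mathbb{R}^{2}} e^{-NF(y)}\, dy},
\]
which cancels the unknown partition function $Z$. By Proposition \ref{prop:pos-def}, $F$ still has a unique global minimum at the origin, so both integrals localize near $0$, and by Proposition \ref{prop:tanh_Taylor} the integrand $\tanh^{K}(y_{1})\tanh^{Q}(y_{2})$ may be replaced by $y_{1}^{K}y_{2}^{Q}$ up to negligible error. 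The essential new feature compared to the high-temperature case is that the Hessian $H = L - \boldsymbol{\alpha}$ now has vanishing determinant $(L_{11}-\alpha_{1})(L_{22}-\alpha_{2}) - L_{12}^{2} = 0$, so the multivariate Laplace estimate of Proposition \ref{prop:Laplace_multivariate} does not apply directly.

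To cope with the degeneracy I would diagonalize $H$. Writing $a := L_{11}-\alpha_{1}$, $b := L_{22}-\alpha_{2}$, and $c := L_{12}$, one checks that $\mathbf{n} := (c,a)/\sqrt{a^{2}+c^{2}}$ spans $\ker H$ while $\mathbf{p} := (-a,c)/\sqrt{a^{2}+c^{2}}$ is an eigenvector of $H$ for the positive eigenvalue $a+b$. Changing variables via $y = s\mathbf{n} + t\mathbf{p}$ and using the Taylor expansion $\ln\cosh z = z^{2}/2 - z^{4}/12 + O(z^{6})$ componentwise, a short computation using $c^{2}=ab$ and $a^{2}+c^{2} = a(a+b)$ yields
\[
F(s\mathbf{n}+t\mathbf{p}) = \frac{a+b}{2}\,t^{2} + \eta_{0}\,s^{4} + O\bigl(s^{6} + s^{2}t^{2} + t^{4}\bigr),\qquad \eta_{0} := \frac{\alpha_{1} b^{2} + \alpha_{2} a^{2}}{12(a+b)^{2}}.
\]
The $t$-direction is still governed at second order, while the degenerate $s$-direction is stabilized by the quartic term, which is positive because $\alpha_{1},\alpha_{2} > 0$.

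The natural scaling that keeps both contributions of order one in $NF$ is $t = \tilde{t}/\sqrt{N}$ and $s = \tilde{s}/N^{1/4}$, with Jacobian $N^{-3/4}$. Because $N^{-1/4}$ dominates $N^{-1/2}$, the $s$-component controls the leading behaviour
\[
y_{1}^{K} y_{2}^{Q} \sim \frac{c^{K} a^{Q}}{(a^{2}+c^{2})^{(K+Q)/2}}\, \tilde{s}^{K+Q} N^{-(K+Q)/4},
\]
and the rescaled integral factorizes asymptotically into a Gaussian in $\tilde{t}$ and a quartic integral in $\tilde{s}$. The substitution $u = \eta_{0}\tilde{s}^{4}$ evaluates $\int_{\mathbb{R}} e^{-\eta_{0}\tilde{s}^{4}}\tilde{s}^{K+Q}\,d\tilde{s}$ to $\tfrac{1}{2}\eta_{0}^{-(K+Q+1)/4}\Gamma((K+Q+1)/4)$ when $K+Q$ is even and $0$ otherwise. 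Dividing by the same integral with $K=Q=0$ cancels the Gaussian factor and all constant prefactors, leaving
\[
\frac{c^{K} a^{Q}}{(a^{2}+c^{2})^{(K+Q)/2}}\, \eta_{0}^{-(K+Q)/4}\, \frac{\Gamma((K+Q+1)/4)}{\Gamma(1/4)}\, N^{-(K+Q)/4}.
\]
Rewriting $c^{K} a^{Q}/(a^{2}+c^{2})^{(K+Q)/2}$ as $a^{Q/2} b^{K/2}/(a+b)^{(K+Q)/2}$ by means of $c^{2}=ab$ and substituting the explicit $\eta_{0}$ makes all $(a+b)$-factors cancel, delivering exactly the expression stated in the theorem.

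The main obstacle is controlling the Taylor remainder once the mixed scaling $t\sim N^{-1/2}$, $s\sim N^{-1/4}$ is applied: one must verify that the terms $N s^{2}t^{2}$, $N t^{4}$, and $N s^{6}$ all tend to zero (which they do, at rates $N^{-1/2}$, $N^{-1}$, $N^{-1/2}$ respectively), and that the higher-order pieces of $\ln\cosh$ do not disturb the factorization. This is where the critical condition $c^{2}=ab$ enters essentially, since it is precisely what makes $\mathbf{n}$ a genuine null vector of $H$ and decouples the $s$- and $t$-components in the quadratic part of $F$. The switch from the Gaussian factor $\Gamma(\cdot/2)$ of the high-temperature case to the quartic factor $\Gamma(\cdot/4)$, and the slower normalisation exponent $(K+Q)/4$ in place of $(K+Q)/2$, are both direct consequences of the quartic behaviour of $F$ along $\ker H$.
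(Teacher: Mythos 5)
Your argument is correct, and it is worth noting that the paper itself gives no in-text proof of Theorem \ref{thm:EXY-het_crit}: it only records that $F$ retains a unique global minimum at the origin (Proposition \ref{prop:pos-def}) and defers the rest to \cite{KT2}. What you supply is precisely the missing degenerate Laplace analysis, and the algebra checks out: with $a=L_{11}-\alpha_{1}$, $b=L_{22}-\alpha_{2}$, $c=L_{12}$ and $c^{2}=ab$, the vector $(c,a)$ does span $\ker H$, the transverse eigenvalue is $(a^{2}+c^{2})/a=a+b$, the expansion $\ln\cosh z=z^{2}/2-z^{4}/12+O(z^{6})$ gives $F=\tfrac12 y^{T}Hy+\tfrac1{12}(\alpha_{1}y_{1}^{4}+\alpha_{2}y_{2}^{4})+O(|y|^{6})$ and hence your $\eta_{0}$, and the substitution $u=\eta_{0}\tilde s^{4}$ together with the cancellation of all $(a+b)$-powers reproduces the stated formula, including the $\Gamma\left(\frac{K+Q+1}{4}\right)/\Gamma\left(\frac14\right)$ factor and the $N^{-(K+Q)/4}$ rate. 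Your anisotropic scaling $s\sim N^{-1/4}$, $t\sim N^{-1/2}$ is exactly what replaces the uniform $N^{-1/2}$ scaling in Proposition \ref{prop:Laplace_multivariate}, and it correctly explains the passage from $\Gamma(\cdot/2)$ to $\Gamma(\cdot/4)$. Two minor points: the worst Taylor remainder after rescaling is actually the cross term $Ns^{3}t\sim N^{-1/4}$, which you did not list but which still vanishes; and the rewriting $c^{K}=(ab)^{K/2}$ tacitly assumes $L_{12}>0$ when $K$ is odd, a sign convention already implicit in the (positive) formula of the theorem itself. Neither affects the validity of the argument.
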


\begin{rem}
For the special case $M=2$,
\[
J=\left(\begin{array}{cc}
J_{1} & \bar{J}\\
\bar{J} & J_{2}
\end{array}\right),
\]
 $J_{1}=J_{2}=J$, $\alpha_{1}=\alpha_{2}=1/2$, and $J+\bar{J}=2$,
the correlations are asymptotically equal to
\[
12^{\frac{K+L}{4}}\frac{\Gamma\left(\frac{K+L+1}{4}\right)}{\Gamma\left(\frac{1}{4}\right)}\frac{1}{N^{\frac{K+L}{4}}}.
\]

These correlations are identical to those for the model with homogeneous
coupling matrix and $\beta=1$.

\subsubsection{Low Temperature Regime}

By Theorem \ref{thm:Finetti},
\[
m_{k_{1},\ldots,k_{M}}\approx\frac{\int_{\IR^{M}}\tanh^{{\bf k}}\left(y\right)e^{-NF(y)}\textup{d}y}{\int_{\IR^{M}}e^{-NF(y)}\textup{d}y},
\]
where we used the notation introduced in the proof of Proposition
\ref{prop:Laplace_multivariate}. To estimate these integrals for large $N$, we can use a standard
version of Laplace's method, since contrary to the situation in Proposition
\ref{prop:Laplace_multivariate}, the tanh function is not 0 at $\bar{m}$, the minimum of $F$
in the low temperature regime. Let $H(\bar{m})$ stand for the Hessian of $F$ at the minimum. We apply Theorem 3 on p.\! 495 of \cite{wong}
to the numerator and the denominator, which yields
\begin{align*}
m_{k_{1},\ldots,k_{M}} \approx \frac{\left( \frac{2\pi}{N} \right)^{M/2} m^{\bf k} \left( \det H(\bar{m}) \right)^{-1/2} \exp\left( -NF(\bar{m}) \right) }
 {\left( \frac{2\pi}{N} \right)^{M/2} \left( \det H(\bar{m}) \right)^{-1/2} \exp\left( -NF(\bar{m}) \right)} =m^{{\bf k}}.
\end{align*}
Note that in the numerator the function $g_0$ given in Theorem 3 in \cite{wong} is $g_0 (y) = \tanh^{\bf k} y$ for all $y \in \IR^M$, whereas in the denominator we chose the constant function $g_0 = 1$.

This concludes the calculation of the correlations. We will now use
these correlations to calculate the moments of the normalised sums
$\boldsymbol{S}$.
\end{rem}

\section{\label{sec:Moments}Moments of $\boldsymbol{S}$}

We divide this section into three subsections according to the regime
of the model.

\subsection{\label{subsec:High_Temp_Proof}High Temperature Regime}

We first prove the Law of Large Numbers \ref{thm:LLN_high_temp}.
In this case, the normalised sums are
\[
\boldsymbol{S}:=\left(\frac{S_{1}}{N_{1}},\ldots,\frac{S_{M}}{N_{M}}\right).
\]
Later on in this proof, we will distinguish between sum vectors of
different dimensions, in which case we will use a superindex in $\boldsymbol{S}^{M}$,
indicating that the vector is of dimension $M\in\mathbb{N}$.

We define $M_{K_{1},\ldots,K_{M}}$ to be the moments of order $\mathbf{K}=(K_{1},\ldots,K_{M})$
of this random vector, and set $K:=\sum_{i=1}^{M}K_{i}$. Our task
consists of calculating the large $N$ limit of these moments.

We can express the moments in the following fashion:
\begin{align}
M_{K_{1},\ldots,K_{M}} & =\frac{1}{N_{1}^{K_{1}}}\cdots\frac{1}{N_{M}^{K_{M}}}\sum\mathbb{E}\left(X_{1i_{11}}\cdots X_{Mi_{MK_{M}}}\right),\label{eq:moments_LLN_low_1}
\end{align}
where the sum in the last line is over all $i_{\nu1}\ldots,i_{\nu K_{\nu}}\in\{1,\ldots,N_{\nu}\}$
for each group $\nu$. This is where we use the correlations calculated
previously. The above sum has $N^{K}$ summands in it. We have to
find a way to reduce this to a more manageable number. We use Proposition
\ref{thm:comb-coeff-multiindex} to deal with the sum. The number
of multiindices $\underline{\mathbf{i}}=(i_{11},\ldots,i_{MK_{M}})$
with a given profile $\text{\ensuremath{\underbar{\ensuremath{\mathbf{r}}}}}=\text{\ensuremath{\underbar{\ensuremath{\rho}}}(\ensuremath{\underline{\mathbf{i}}})}$
is asymptotically
\begin{equation}
w_{\mathbf{K}}(\underbar{\ensuremath{\mathbf{r}}})\approx\prod_{\lambda=1}^{M}\frac{N_{\lambda}^{\sum_{l=1}^{L}r_{\lambda l}}}{r_{\lambda1}!r_{\lambda2}!\ldots r_{\lambda K_{\lambda}}!}\frac{K_{\lambda}!}{1!^{r_{\lambda1}}2!^{r_{\lambda2}}\cdots K_{\lambda}!^{r_{\lambda K_{\lambda}}}}.\label{eq:number_profiles_M}
\end{equation}
Thus we can state
\begin{equation}
\eqref{eq:moments_LLN_low_1}=\frac{1}{N_{1}}\cdots\frac{1}{N_{M}}\sum_{\text{\ensuremath{\underbar{\ensuremath{\mathbf{r}}}}}}w_{\mathbf{K}}(\underbar{\ensuremath{\mathbf{r}}})\mathbb{E}\left(X_{\text{\ensuremath{\underbar{\ensuremath{\mathbf{r}}}}}}\right),\label{eq:moments_LLN_low_2}
\end{equation}
where the sum is over all possible profile vectors $\underbar{\ensuremath{\mathbf{r}}}$.

The correlations are dependent on the coupling matrix. Since each
$X_{\lambda i}$ raised to an even power equals 1, we only focus on
the indices which occur an odd number of times.

By Theorems \ref{thm:EXY-hom} and \ref{thm:EXY-het_high}, the correlations
include powers of $N$ that depend on the number of indices occurring
an odd number of times. More precisely, up to a multiplicative constant, the correlations
are asymptotically
\[
N^{-\frac{1}{2}\left(\sum_{m_{1}=0}^{\left\lfloor K_{1}/2\right\rfloor }r_{1,2m_{1}+1}+\cdots+\sum_{m_{M}=0}^{\left\lfloor K_{M}/2\right\rfloor }r_{M,2m_{M}+1}\right)}.
\]
Therefore, in the sum above, each summand has the following power
of $N$:
\[
\frac{1}{N_{1}^{K_{1}}\cdots N_{M}^{K_{M}}}N_{1}^{\sum_{\ell_{1}=1}^{K_{1}}r_{1\ell_{1}}}\cdots N_{M}^{\sum_{\ell_{M}=1}^{K_{M}}r_{M\ell_{M}}}N^{-\frac{1}{2}\left(\sum_{m_{1}=0}^{\left\lfloor K_{1}/2\right\rfloor }r_{1,2m_{1}+1}+\cdots+\sum_{m_{M}=0}^{\left\lfloor K_{M}/2\right\rfloor }r_{M,2m_{M}+1}\right)}.
\]
Hence, the exponent of $N$ in each summand is given by
\begin{equation}
-K+\sum_{\lambda=1}^{M}\left(\sum_{m_{\lambda}=0}^{\left\lfloor K_{\lambda}/2\right\rfloor }r_{\lambda,2m_{\lambda}}+\frac{1}{2}\sum_{m_{\lambda}=0}^{\left\lfloor K_{\lambda}/2\right\rfloor }r_{\lambda,2m_{\lambda}+1}\right).\label{eq:power_N}
\end{equation}

\begin{lem}
For all $\text{\ensuremath{\underbar{\ensuremath{\mathbf{r}}}}}$,
the inequality $\eqref{eq:power_N}<0$ holds.
\end{lem}

\begin{proof}
We omit the straightforward proof.
\end{proof}
Since there are only finitely many (their number being independent of $N$) summands in (\ref{eq:moments_LLN_low_2}),
each of which converges to 0, the above lemma proves Theorem \ref{thm:LLN_high_temp}.

Now we turn to the CLT \ref{thm:CLT}. In the proof of the LLN above,
none of the summands contribute asymptotically to the moments, which
therefore converge to 0. If we normalise the sums $\boldsymbol{S}$
by a power of $N$ equal to $\frac{1}{2}$, we will see that the summands
with profile vectors belonging to $\Pi^{0(K_{\lambda})}$ (see Definition
\ref{def:profile_sets}) for each group $\lambda$ contribute positive
quantities to the moments. We proceed similarly to the above by defining
$M_{K_{1},\ldots,K_{M}}$ to be the moments of order $\mathbf{K}=(K_{1},\ldots,K_{M})$
of the random vector
\[
\boldsymbol{S}:=\left(\frac{S_{1}}{\sqrt{N_{1}}},\ldots,\frac{S_{M}}{\sqrt{N_{M}}}\right),
\]
and setting $K:=\sum_{i=1}^{M}K_{i}$. The moments can be expressed
as
\begin{align}
M_{K_{1},\ldots,K_{M}} & =\frac{1}{N_{1}^{\frac{K_{1}}{2}}}\cdots\frac{1}{N_{M}^{\frac{K_{M}}{2}}}\sum_{\text{\ensuremath{\underbar{\ensuremath{\mathbf{r}}}}}}w_{\mathbf{K}}(\underbar{\ensuremath{\mathbf{r}}})\mathbb{E}\left(X_{\text{\ensuremath{\underbar{\ensuremath{\mathbf{r}}}}}}\right).\label{eq:moments_CLT_1}
\end{align}
Asymptotically, the powers of $N$ in (\ref{eq:moments_CLT_1}) are
\[
\frac{1}{N_{1}^{\frac{K_{1}}{2}}}\cdots\frac{1}{N_{M}^{\frac{K_{M}}{2}}}N_{1}^{\sum_{l_{1}=1}^{K_{1}}r_{1l_{1}}}\cdots N_{M}^{\sum_{l_{M}=1}^{K_{M}}r_{Ml_{M}}}N^{-\frac{1}{2}\left(\sum_{m_{1}=0}^{\left\lfloor K_{1}/2\right\rfloor }r_{1,2m_{1}+1}+\cdots+\sum_{m_{M}=0}^{\left\lfloor K_{M}/2\right\rfloor }r_{M,2m_{M}+1}\right)}.
\]
We separate the sum in (\ref{eq:moments_CLT_1}) into two summands:
$A_{1}$, where the sum runs over $\underbar{\ensuremath{\mathbf{r}}}$
such that for all $\lambda$ $\ensuremath{\underline{r_{\lambda}}\in\Pi^{0(K_{\lambda})}}$,
and $A_{2}$ for all other $\underbar{\ensuremath{\mathbf{r}}}$.
We claim
\begin{prop}
\label{prop:Pi_plus_no_contrib}The limit of $A_{2}$ as $N\rightarrow\infty$
is $0$.
\end{prop}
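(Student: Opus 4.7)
The plan is to show that each summand in $A_2$ is at most $O(N^{-1})$ and then observe that the number of summands is bounded independently of $N$, so that $A_2 \to 0$ by a finite triangle-inequality argument.

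First I would read off the exponent of $N$ in a typical summand. By Proposition \ref{thm:comb-coeff-multiindex}, $w_{\mathbf{K}}(\underline{\mathbf{r}}) \sim C_1(\underline{\mathbf{r}}) \prod_\lambda N_\lambda^{R_\lambda}$ where $R_\lambda := \sum_\ell r_{\lambda\ell}$ is the number of distinct indices in group $\lambda$ and $C_1(\underline{\mathbf{r}})$ depends only on $\mathbf{K}$ and $\underline{\mathbf{r}}$. Since $X_{\lambda i}^2 = 1$, the product $X_{\underline{\mathbf{r}}}$ reduces to a product of $O_\lambda := \sum_{m \ge 0} r_{\lambda, 2m+1}$ distinct spins per group $\lambda$, so by Theorems \ref{thm:EXY-hom} and \ref{thm:EXY-het_high} (the homogeneous and heterogeneous high-temperature correlation formulas applied to the reduced product) one has $\mathbb{E}(X_{\underline{\mathbf{r}}}) \sim C_2(\underline{\mathbf{r}})\, N^{-T/2}$ with $T := \sum_\lambda O_\lambda$ and $C_2(\underline{\mathbf{r}})$ uniformly bounded in $N$ (it is a fixed Gaussian moment of bounded order in the heterogeneous case, and an explicit expression in $\beta$ and $T$ in the homogeneous case). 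Using $N_\lambda \sim \alpha_\lambda N$, the overall summand scales like $N^{e(\underline{\mathbf{r}})}$ with
\[
e(\underline{\mathbf{r}}) = \sum_{\lambda=1}^{M} \Bigl( -\tfrac{K_\lambda}{2} + R_\lambda - \tfrac{O_\lambda}{2} \Bigr).
\]

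The key algebraic step is then a group-by-group rewrite. Using $\sum_\ell \ell\, r_{\lambda\ell} = K_\lambda$ and separating odd and even $\ell$, a direct calculation yields
\[
-\tfrac{K_\lambda}{2} + R_\lambda - \tfrac{O_\lambda}{2} = -\sum_{k \ge 1} k\, r_{\lambda, 2k+1} \;-\; \sum_{k \ge 2} (k-1)\, r_{\lambda, 2k}.
\]
This is manifestly $\le 0$, with equality iff $r_{\lambda, \ell} = 0$ for every $\ell \ge 3$, that is, iff $\underline{r_\lambda} \in \Pi^{0(K_\lambda)}$. Conversely, if $\underline{r_\lambda} \in \Pi^{+(K_\lambda)}$ then some $r_{\lambda,\ell}$ with $\ell \ge 3$ is $\ge 1$, which forces the per-group exponent to be $\le -1$. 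Summing over $\lambda$, any $\underline{\mathbf{r}}$ appearing in $A_2$ has at least one group with per-group exponent $\le -1$ and all others $\le 0$, so $e(\underline{\mathbf{r}}) \le -1$. Since the total number of profile vectors $\underline{\mathbf{r}}$ is bounded by $\prod_\lambda (K_\lambda+1)^{K_\lambda}$, independent of $N$, this gives $A_2 = O(N^{-1}) \to 0$.

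I expect the only real obstacle to be the bookkeeping in the exponent identity above: one has to make sure that the $\alpha_\lambda$-factors, the Gaussian (or semi-factorial) prefactors, and the various counting formulas all recombine so that the problematic $N$-dependence is indeed captured by the single quantity $e(\underline{\mathbf{r}})$. Once that identity is in hand, the statement is immediate, and it is also encouraging that the characterisation of the vanishing terms (strict inequality precisely when a multiplicity exceeds $2$) is exactly what is needed for the next step of the CLT proof, where only the contributions from $\Pi^{0(K_\lambda)}$ survive.
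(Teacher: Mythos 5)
Your proposal is correct and follows essentially the same route as the paper: read off the exponent of $N$ contributed by the counting factor $w_{\mathbf{K}}(\underline{\mathbf{r}})$, the normalisation, and the correlation (which depends only on the indices of odd multiplicity), show the per-group exponent is $\leq 0$ with equality exactly on $\Pi^{0(K_\lambda)}$, and conclude from the finiteness of the set of profiles. Your explicit identity $-\tfrac{K_\lambda}{2}+R_\lambda-\tfrac{O_\lambda}{2}=-\sum_{k\geq 1}k\,r_{\lambda,2k+1}-\sum_{k\geq 2}(k-1)\,r_{\lambda,2k}$ is just a sharper rewriting of the paper's comparison with $\sum_\ell \ell\,r_{\lambda\ell}=K_\lambda$, and it checks out.
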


\begin{proof}
Let $\underbar{\ensuremath{\mathbf{r}}}$ be such that $\underline{r_{\nu}}\in\Pi^{+(K_{\nu})}$
for some group $\nu$.

We have for any group $\lambda$ the factor
\[
\frac{1}{N_{\lambda}^{\frac{K_{\lambda}}{2}}}N_{\lambda}^{\sum_{l_{\lambda}=1}^{K_{\lambda}}r_{\lambda l_{\lambda}}}N^{-\frac{1}{2}\sum_{m_{\lambda}=0}^{\left\lfloor K_{\lambda}/2\right\rfloor }r_{\lambda,2m_{\lambda}+1}},
\]
which has a power of $N$ equal to
\begin{align*}
-\frac{K_{\lambda}}{2}+\sum_{l_{\lambda}=1}^{K_{\lambda}}r_{\lambda l_{\lambda}}-\frac{1}{2}\sum_{m_{\lambda}=0}^{\left\lfloor K_{\lambda}/2\right\rfloor }r_{\lambda,2m_{\lambda}+1} & =\frac{1}{2}\left(-K_{\lambda}+\sum_{m_{\lambda}=0}^{\left\lfloor K_{\lambda}/2\right\rfloor }2r_{\lambda,2m_{\lambda}}+\sum_{m_{\lambda}=0}^{\left\lfloor K_{\lambda}/2\right\rfloor }r_{\lambda,2m_{\lambda}+1}\right)\leq0,
\end{align*}
with equality if and only if
\[
\sum_{m_{\lambda}=0}^{\left\lfloor K_{\lambda}/2\right\rfloor }2r_{\lambda,2m_{\lambda}}+\sum_{m_{\lambda}=0}^{\left\lfloor K_{\lambda}/2\right\rfloor }r_{\lambda,2m_{\lambda}+1}=\sum_{l_{\lambda}=1}^{K_{\lambda}}\ell_{\lambda}r_{\lambda l_{\lambda}}=K_{\text{\ensuremath{\lambda}}}.
\]
The first equality above holds if and only if $r_{\lambda,\ell_{\lambda}}=0$
for all $\ell_{\lambda}>2$.

For the group $\nu$, we have by assumption some $k>2$ with $r_{\nu,k}>0$,
and hence $r_{\nu,k}<kr_{\nu,k}$. That implies
\[
-\frac{K_{\nu}}{2}+\sum_{\ell_{\nu}=1}^{K_{\nu}}r_{1\ell_{\nu}}-\frac{1}{2}\sum_{m_{\nu}=0}^{\left\lfloor K_{\nu}/2\right\rfloor }r_{\nu,2m_{\nu}+1}<K_{\nu}.
\]
So the overall power of $N$ in each summand is negative and it converges
to 0 as $N\rightarrow\infty$. There are only finitely many such summands
in $A_{2}$, and we conclude that $A_{2}$ converges to 0 as $N\rightarrow\infty$.
\end{proof}
It follows from this proposition that
\begin{align}
M_{K_{1},\ldots,K_{M}}\approx A_{1} & =\frac{1}{N_{1}^{\frac{K_{1}}{2}}}\cdots\frac{1}{N_{M}^{\frac{K_{M}}{2}}}\sum_{\text{\ensuremath{\underbar{\ensuremath{\mathbf{r}}}}:\ensuremath{\forall\lambda:\underline{r_{\lambda}}\in\Pi^{0(K_{\lambda})}}}}w_{\mathbf{K}}(\underbar{\ensuremath{\mathbf{r}}})\mathbb{E}\left(X_{\text{\ensuremath{\underbar{\ensuremath{\mathbf{r}}}}}}\right)\nonumber \\
 & \approx\frac{1}{N_{1}^{\frac{K_{1}}{2}}}\cdots\frac{1}{N_{M}^{\frac{K_{M}}{2}}}\sum_{\text{\ensuremath{\underbar{\ensuremath{\mathbf{r}}}}:\ensuremath{\forall\lambda:\underline{r_{\lambda}}\in\Pi^{0(K_{\lambda})}}}}\prod_{\lambda=1}^{M}\frac{N_{\lambda}^{\sum_{\ell=1}^{L}r_{\lambda\ell}}}{r_{\lambda1}!r_{\lambda2}!}\frac{K_{\lambda}!}{2!^{r_{\lambda2}}}\mathbb{E}\left(X_{\text{\ensuremath{\underbar{\ensuremath{\mathbf{r}}}}}}\right)\nonumber \\
 & =\frac{1}{N_{1}^{\frac{K_{1}}{2}}}\cdots\frac{1}{N_{M}^{\frac{K_{M}}{2}}}\sum_{k_{1}=0}^{K_{1}}\cdots\sum_{k_{M}=0}^{K_{M}}\prod_{\lambda=1}^{M}\frac{N_{\lambda}^{\frac{K_{\lambda}+k_{\lambda}}{2}}}{k_{\lambda}!\left(\frac{K_{\lambda}-k_{\lambda}}{2}\right)!}\frac{K_{\lambda}!}{2^{\frac{K_{\lambda}-k_{\lambda}}{2}}}\mathbb{E}\left(X_{\text{\ensuremath{\underbar{\ensuremath{\mathbf{r}}}}}}\right).\label{eq:S_moments_A_1}
\end{align}
In the last line, each sum is over those $k_{\lambda}$ which have
the same parity as $K_{\lambda}$ and $\underbar{\ensuremath{\mathbf{r}}}$
is the profile vector where $\underline{r_{\lambda}}=\left(k_{\lambda},\left(K_{\lambda}-k_{\lambda}\right)/2\right)$
for all groups $\lambda$.

Of course, the expectation $\mathbb{E}\left(X_{\text{\ensuremath{\underbar{\ensuremath{\mathbf{r}}}}}}\right)$
depends on the class of coupling matrix. As a reminder, if $J$ is
homogeneous, then
\begin{equation}
\mathbb{E}\left(X_{\text{\ensuremath{\underbar{\ensuremath{\mathbf{r}}}}}}\right)\approx(k_{1}+\cdots+k_{M}-1)!!\left(\frac{\beta}{1-\beta}\right)^{\frac{k_{1}+\cdots+k_{M}}{2}}\frac{1}{N^{\frac{k_{1}+\cdots+k_{M}}{2}}}.\label{eq:EXY_hom_high}
\end{equation}
If, on the other hand, $J$ is heterogeneous, then
\begin{equation}
\mathbb{E}\left(X_{\text{\ensuremath{\underbar{\ensuremath{\mathbf{r}}}}}}\right)\approx m_{k_{1},\ldots,k_{M}}\left(H^{-1}\right)\frac{1}{N^{\frac{k_{1}+\cdots+k_{M}}{2}}}.\label{eq:EXY_het_high}
\end{equation}
Recall that in the last line above $m_{k_{1},\ldots,k_{M}}\left(H^{-1}\right)$
stands for the moment of centred multivariate normal distributions
with covariance matrix $H^{-1}$.

We now show that the correlation in (\ref{eq:EXY_hom_high}) can be
expressed in a similar fashion as in (\ref{eq:EXY_het_high}).
\begin{defn}
Let for all $n\in\mathbb{N}$ and all $\ell_{1},\ldots,\ell_{n}\in\mathbb{N}_{0}$
the notation $m_{\ell_{1},\ldots,\ell_{n}}^{n}(C)$ stand for an $n$-dimensional
multivariate moment of an $n$-dimensional multivariate normal distribution
with mean 0 and covariance matrix $C$. The superindex $n$ indicates
the dimension of the distribution.

We state Isserlis's Theorem which we will use in the proof of the
next proposition.
\end{defn}

\begin{thm}
[Isserlis's Theorem]\label{thm:Isserlis}Let $\ell_{1},\ldots,\ell_{n}\in\mathbb{N}_{0}$.
Assume $(Z_{1}\ldots,Z_{n})$ follows $\mathcal{N}(0,C)$, $C=(c_{ij})_{i,j=1,\ldots,n}$.
Define a set $\boldsymbol{Z}=\bigcup_{i=1}^{M+1}\left\{ Z_{i1},\ldots,Z_{i\ell_{i}}\right\} $
with $\ell_{i}$ copies of $Z_{i}$ for each $i=1,\ldots,n$. If the
sum $\ell_{1}+\cdots+\ell_{n}$ is odd, the moment $m_{\ell_{1},\ldots,\ell_{n}}^{n}(C)$
is $0$ due to the symmetry of the normal distribution. Otherwise
we let $\mathcal{P}$ be the set of all possible pair partitions of
the set $\boldsymbol{Z}$. Let for any $\pi\in\mathcal{P}$ $\pi(k)=(i,j)$
if the $k$-th pair in $\pi$ has one copy of $Z_{i}$ and one copy of $Z_{j}$. Then
\begin{equation*}
m_{\ell_{1},\ldots,\ell_{n}}^{n}(C)=\sum_{\pi\in\mathcal{P}}\prod_{k=1}^{\frac{\ell_{1}+\cdots+\ell_{n}}{2}}c_{\pi(k)}.
\end{equation*}
\end{thm}

\begin{proof}
For a proof see the original publication \cite{Isserlis}.
\end{proof}
\begin{prop}
Let $\ell_{1},\ldots,\ell_{n}\in\mathbb{N}_{0}$. Let $C=(c)_{\lambda,\mu=1,\ldots,n}$
be a homogeneous matrix (recall Definition \ref{def:hom_matrix}).
Then
\[
m_{\ell_{1},\ldots,\ell_{n}}^{n}(C)=m_{\ell_{1}+\cdots+\ell_{n}}^{1}(c).
\]
\end{prop}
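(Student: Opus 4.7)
The plan is to invoke Isserlis's Theorem (Theorem~\ref{thm:Isserlis}) in essentially one line. Set $L := \ell_1 + \cdots + \ell_n$. If $L$ is odd, both sides of the claimed identity vanish by the symmetry of the centred normal distribution, so I will assume $L$ is even. Applying the pair-partition formula \eqref{eq:Isserlis}, every factor $c_{\pi(k)}$ equals the constant $c$, regardless of the partition $\pi \in \mathcal{P}$ and of the pair index $k$, because every entry of $C$ equals $c$. Hence each summand in \eqref{eq:Isserlis} contributes exactly $c^{L/2}$, and the multivariate moment collapses to $|\mathcal{P}| \cdot c^{L/2}$.

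Next I would recall that $\mathcal{P}$ is the set of pair partitions of a set of cardinality $L$, so $|\mathcal{P}| = (L-1)!!$. Applying Isserlis's Theorem a second time in the one-dimensional case (with the single ``covariance'' $c$) gives $m_L^{1}(c) = (L-1)!!\, c^{L/2}$ as well, which establishes the identity. An even quicker viewpoint, which I would mention as an aside, is that $C = c\, \mathbf{1}_{n \times n}$ has rank one, so $\mathcal{N}(0, C)$ is the degenerate law of $(Z, \ldots, Z)$ for a single $Z \sim \mathcal{N}(0, c)$; then $Z_1^{\ell_1} \cdots Z_n^{\ell_n} = Z^L$ almost surely, and taking expectations yields the claim directly.

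There is no real obstacle here: the statement is essentially the bookkeeping observation that, for a rank-one covariance, the multivariate Gaussian moment coincides with the one-dimensional moment of the same total degree. The only point that warrants a word of care is that Isserlis's formula counts pair partitions of the multiset of indistinguishable ``copies'' $Z_{i1},\ldots,Z_{i\ell_i}$ as partitions of an ordinary $L$-element set, so that $|\mathcal{P}| = (L-1)!!$ without any overcounting correction; this is built into the statement of Theorem~\ref{thm:Isserlis} and requires no further argument.
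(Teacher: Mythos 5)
Your proof is correct and follows essentially the same route as the paper: apply Isserlis's Theorem, observe that every factor $c_{\pi(k)}$ equals $c$ so each pair partition contributes $c^{L/2}$, count $|\mathcal{P}|=(L-1)!!$, and identify the result with the univariate moment $m_{L}^{1}(c)$. The rank-one/degenerate-Gaussian remark is a nice additional perspective but is not needed.
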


\begin{proof}
By Isserlis's Theorem,
\begin{align*}
m_{\ell_{1},\ldots,\ell_{n}}^{n}(C) & =\sum_{\pi\in\mathcal{P}}\prod_{k=1}^{\frac{\ell_{1}+\cdots+\ell_{n}}{2}}c_{\pi(k)}=\sum_{\pi\in\mathcal{P}}c^{\frac{\ell_{1}+\cdots+\ell_{n}}{2}}\\
 & =(\ell_{1}+\cdots+\ell_{n}-1)!!\,c^{\frac{\ell_{1}+\cdots+\ell_{n}}{2}}=m_{\ell_{1}+\cdots+\ell_{n}}^{1}(c).
\end{align*}
\end{proof}
\begin{cor}
The homogeneous coupling correlation (\ref{eq:EXY_hom_high}) is equal
to
\begin{align*}
\mathbb{E}\left(X_{\text{\ensuremath{\underbar{\ensuremath{\mathbf{r}}}}}}\right) & \approx m_{k_{1},\ldots,k_{M}}^{M}\left(\left(\bar{\beta}\right)_{\lambda,\mu=1,\ldots,M}\right)\frac{1}{N^{\frac{k_{1}+\cdots+k_{M}}{2}}},
\end{align*}
where $\bar{\beta}=\frac{\beta}{1-\beta}$.
\end{cor}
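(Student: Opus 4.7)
The plan is to read the corollary as a direct repackaging of equation (\ref{eq:EXY_hom_high}) using two standard identifications, so there is essentially no new content to prove; the work is in verifying that each side of the two claimed equalities matches the formula already established in Theorem \ref{thm:EXY-hom}.

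First I would handle the equality between (\ref{eq:EXY_hom_high}) and the univariate moment $m^1_{k_1+\cdots+k_M}(\bar\beta)/N^{(k_1+\cdots+k_M)/2}$. Here I invoke the classical formula for moments of a centred Gaussian: if $Z\sim\mathcal{N}(0,\sigma^2)$ then $\mathbb{E}(Z^{2n}) = (2n-1)!!\,\sigma^{2n}$ and $\mathbb{E}(Z^{2n+1})=0$. Taking $\sigma^2 = \bar\beta = \beta/(1-\beta)$ and $n=(k_1+\cdots+k_M)/2$ (even case), this is precisely the prefactor $(k_1+\cdots+k_M-1)!!\,\bar\beta^{(k_1+\cdots+k_M)/2}$ appearing in (\ref{eq:EXY_hom_high}). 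In the odd case both expressions vanish, matching Theorem \ref{thm:EXY-hom}.

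Second I would handle the equality between $m^1_{k_1+\cdots+k_M}(\bar\beta)$ and the multivariate moment $m^M_{k_1,\ldots,k_M}(\bar\beta (1)_{\lambda,\mu=1,\ldots,M})$. This is a direct application of the proposition immediately preceding the corollary, with $n=M$, $\ell_\lambda = k_\lambda$, and $c=\bar\beta$: that proposition, proved via Isserlis's Theorem \ref{thm:Isserlis} by observing that every pair-partition contributes the same factor $c^{(\ell_1+\cdots+\ell_n)/2}$, gives exactly
\[
m^M_{k_1,\ldots,k_M}\!\bigl(\bar\beta (1)_{\lambda,\mu=1,\ldots,M}\bigr) = m^1_{k_1+\cdots+k_M}(\bar\beta).
\]
Chaining the two identifications together yields the corollary.

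There is no real obstacle here; the statement is a bookkeeping observation whose sole purpose is to unify the notation between the homogeneous and heterogeneous cases, so that in the subsequent CLT computation the expression for $\mathbb{E}(X_{\underline{\mathbf{r}}})$ can be written uniformly as a multivariate normal moment — homogeneously coupled with covariance matrix $\bar\beta(1)_{\lambda,\mu}$ in one case, and $H^{-1}$ in the other. The only small care point is to note that both sides respect the parity constraint (vanishing when $k_1+\cdots+k_M$ is odd), which is automatic from the symmetry of the normal distribution on the right-hand side and from Theorem \ref{thm:EXY-hom} on the left.
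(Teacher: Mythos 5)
Your proposal is correct and matches the paper's intended argument: the corollary is a direct consequence of the preceding proposition (whose proof via Isserlis's Theorem already records $m^{1}_{\ell_{1}+\cdots+\ell_{n}}(c)=(\ell_{1}+\cdots+\ell_{n}-1)!!\,c^{(\ell_{1}+\cdots+\ell_{n})/2}$), chained with the formula from Theorem \ref{thm:EXY-hom}, exactly as you describe. The paper leaves the corollary without a separate proof for precisely this reason, and your parity remark is the only point of care in either direction.
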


We can therefore treat the two classes of coupling matrix $J$ simultaneously
by setting
\begin{equation}
\Sigma:=(\sigma_{\lambda,\mu})_{\lambda,\mu=1,\ldots,M}:=\begin{cases}
\bar{\beta}, & \text{if }J\text{ is homogeneous,}\\
\left(H^{-1}\right)_{\lambda,\mu}, & \text{if }J\text{ is heterogeneous.}
\end{cases}\label{eq:cov_matrix_Sigma}
\end{equation}
Next, we present a recursive formulation of Isserlis's Theorem.
\begin{prop}
Let $M\in\mathbb{N},l_{1},\ldots,l_{M+1}\in\mathbb{N}_{0}$, and $m_{l_{1},\ldots,l_{M+1}}^{M+1}$
be the moment of order $(l_{1},\ldots,l_{M+1})$ of an $(M+1)$-dimensional
multivariate normal distribution with covariance matrix
\[
\Sigma^{M+1}=(\sigma_{i,j})_{i,j=1,\ldots,M+1}.
\]
Then the moment $m_{l_{1},\ldots,l_{M+1}}^{M+1}$ can be calculated
in terms of the covariances and the $M$-dimensional moments $m_{k_{1},\ldots,k_{M}}^{M}$
of the $M$-dimensional multivariate normal distribution of the first $M$ entries with covariance
matrix
\[
\Sigma^{M}=(\sigma_{i,j})_{i,j=1,\ldots,M}
\]
as follows:
\begin{align}
m_{l_{1},\ldots,l_{M+1}}^{M+1} & =\sum_{(r_{1},\ldots,r_{M})\in A}\frac{l_{1}!\cdots l_{M+1}!}{\left(\frac{l_{M+1}-\sum_{i=1}^{M}r_{i}}{2}\right)!2^{\frac{l_{M+1}-\sum_{i=1}^{M}r_{i}}{2}}r_{1}!\cdots r_{M}!(l_{1}-r_{1})!\cdots(l_{M}-r_{M})!} \nonumber \\
 & \quad\cdot\sigma_{1,M+1}^{r_{1}}\cdots\sigma_{M,M+1}^{r_{M}}\sigma_{M+1,M+1}^{\frac{l_{M+1}-\sum_{i=1}^{M}r_{i}}{2}}m_{l_{1}-r_{1},\ldots,l_{M}-r_{M}}^{M},\label{eq:rec_Isserlis}
\end{align}
where $A=\left\{ (r_{1},\ldots,r_{M})\;|\;\forall i=1,\ldots,M:0\leq r_{i}\leq l_{i}\wedge l_{M+1}\mathcal{\textrm{ and }}l_{i}-r_{i},l_{M+1}-\sum_{i=1}^{M}r_{i}\text{ are even}\right\} $.
\end{prop}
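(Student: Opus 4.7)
The plan is to apply Isserlis's Theorem (Theorem \ref{thm:Isserlis}) directly to the $(M+1)$-dimensional moment on the left, and then re-group the resulting sum over pair partitions of $\boldsymbol{Z}=\bigcup_{i=1}^{M+1}\{Z_{i1},\ldots,Z_{il_{i}}\}$ according to how the $l_{M+1}$ copies of $Z_{M+1}$ are matched. Concretely, for each pair partition $\pi$ record, for every $i\in\{1,\ldots,M\}$, the number $r_{i}$ of pairs in $\pi$ of the form $\{Z_{i},Z_{M+1}\}$, and set $s:=(l_{M+1}-\sum_{i=1}^{M}r_{i})/2$, the number of pairs of the form $\{Z_{M+1},Z_{M+1}\}$. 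For $\pi$ to exist one needs $l_{M+1}-\sum r_{i}\ge 0$ and even, together with $\sum_{i=1}^{M}(l_{i}-r_{i})$ even — this is exactly the set $A$.

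Next I would count, for a fixed tuple $(r_{1},\ldots,r_{M})\in A$, how many pair partitions $\pi$ of $\boldsymbol{Z}$ realise it once a pair partition of the $l_{i}-r_{i}$ remaining copies of $Z_{i}$ (for $i\le M$) has been chosen. Treating the $l_{M+1}$ copies of $Z_{M+1}$ as distinguishable, I would: (i) partition them into blocks of sizes $r_{1},\ldots,r_{M},2s$, giving the multinomial $\frac{l_{M+1}!}{r_{1}!\cdots r_{M}!(2s)!}$; (ii) for each $i\le M$, choose which $r_{i}$ of the $l_{i}$ copies of $Z_{i}$ are consumed and match them with the designated copies of $Z_{M+1}$, in $\frac{l_{i}!}{(l_{i}-r_{i})!}$ ways; (iii) pair the remaining $2s$ copies of $Z_{M+1}$ among themselves in $(2s-1)!!=\frac{(2s)!}{2^{s}s!}$ ways. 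Each such $\pi$ contributes to \eqref{eq:Isserlis} the factor $\sigma_{1,M+1}^{r_{1}}\cdots\sigma_{M,M+1}^{r_{M}}\sigma_{M+1,M+1}^{s}$ from the pairs containing $Z_{M+1}$, times the product of $\sigma$'s from the remaining pairs, which only involve indices in $\{1,\ldots,M\}$.

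Summing the latter over all pair partitions of the $l_{i}-r_{i}$ leftover copies of $Z_{i}$ ($i=1,\ldots,M$) is, by Isserlis's Theorem applied to the $M$-dimensional marginal with covariance matrix $\Sigma^{M}$, precisely $m_{l_{1}-r_{1},\ldots,l_{M}-r_{M}}^{M}$. Multiplying all combinatorial factors yields
\[
\frac{l_{M+1}!}{r_{1}!\cdots r_{M}!(2s)!}\cdot\prod_{i=1}^{M}\frac{l_{i}!}{(l_{i}-r_{i})!}\cdot\frac{(2s)!}{2^{s}s!}=\frac{l_{1}!\cdots l_{M+1}!}{s!\,2^{s}\,r_{1}!\cdots r_{M}!(l_{1}-r_{1})!\cdots(l_{M}-r_{M})!},
\]
which, after substituting $s=(l_{M+1}-\sum r_{i})/2$, is exactly the coefficient appearing in \eqref{eq:rec_Isserlis}. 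Summing over $(r_{1},\ldots,r_{M})\in A$ then reproduces the claimed identity.

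There is no real obstacle beyond careful bookkeeping; the one subtlety is to verify that no pair partition is double-counted, which is automatic because the data $(r_{1},\ldots,r_{M})$ together with the induced pair partition of the remaining copies uniquely determines $\pi$, and the counts in (i)--(iii) are over distinguishable copies so that every pair partition on $\boldsymbol{Z}$ with the prescribed statistics is enumerated exactly once.
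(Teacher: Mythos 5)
Your proof is correct and takes essentially the same route as the paper's: apply Isserlis's Theorem to the $(M+1)$-dimensional moment, classify the pair partitions by the numbers $r_{i}$ of pairs matching a copy of $Z_{M+1}$ with a copy of $Z_{i}$, and count the partitions realising a given $(r_{1},\ldots,r_{M})$ with the same multinomial and double-factorial bookkeeping, so that the leftover pairs reassemble into $m_{l_{1}-r_{1},\ldots,l_{M}-r_{M}}^{M}$. One small caveat: the existence condition your argument actually yields is that $\sum_{i=1}^{M}(l_{i}-r_{i})$ be even, whereas the set $A$ in the statement demands that each $l_{i}-r_{i}$ be even individually, so your (weaker, and in fact correct) condition is not literally ``exactly the set $A$'' --- though the paper's own proof passes over the same point without comment.
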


\begin{proof}
Let $(Z_{1}\ldots,Z_{M+1})$ follow $\mathcal{N}(0,\Sigma^{M+1})$.
By Isserlis's Theorem, the higher order moments of an $(M+1)$-dimensional
multivariate normal distribution can be calculated as the sum over
all possible pair partitions of the component variables. To calculate
the moment
\[
m_{l_{1},\ldots,l_{M+1}}^{M+1}:=m_{l_{1},\ldots,l_{M+1}}^{M+1}\left(\mathcal{N}\left(0,\Sigma^{M+1}\right)\right)
\]
of order $(l_{1},\ldots,l_{M+1})$, we define a set $\boldsymbol{Z}=\bigcup_{i=1}^{M+1}\left\{ Z_{i1},\ldots,Z_{i\ell_{i}}\right\} $
with $l_{i}$ copies of $Z_{i}$ for each $i=1,\ldots,M+1$. If the
sum $l_{1}+\cdots+l_{M+1}$ is odd, the moment $m_{l_{1},\ldots,l_{M+1}}^{M+1}$
is 0 due to the symmetry of the normal distribution. Otherwise we
let $\mathcal{P}$ be the set of all possible pair partitions of the
set $\boldsymbol{Z}$. Let for any $\pi\in\mathcal{P}$ $\pi(k)=(i,j)$
if the $k$-th pair in $\pi$ has one copy $Z_{i}$ and one copy $Z_{j}$. Then, by Isserlis,
\begin{equation}
m_{l_{1},\ldots,l_{M+1}}^{M+1}=\sum_{\pi\in\mathcal{P}}\prod_{k=1}^{\frac{l_{1}+\cdots+l_{M+1}}{2}}\sigma_{\pi(k)}\label{eq:Isserlis-M_plus_1}
\end{equation}
holds. We express the $(M+1)$-dimensional moment $m_{l_{1},\ldots,l_{M+1}}^{M+1}$
in terms of the $M$-dimensional moments $m_{l_{1},\ldots,l_{M+1}}^{M}$.
Each of the $l_{M+1}$ copies of $Z_{M+1}$ can be paired up with
a copy of $Z_{1},\ldots,Z_{M}$ or another copy of $Z_{M+1}$. Let
the number of copies of $Z_{M+1}$ paired up with a $Z_{i}$ for $i=1,\ldots,M$
be $r_{i}$. Then there are $r:=\sum_{i=1}^{M}r_{i}$ remaining copies
of $Z_{M+1}$ that must be paired, hence $l_{M+1}-r$ must be even.
Applying Isserlis to the $M$-dimensional moment, a single pair partition
$\pi$ contributes
\[
\sigma_{1,M+1}^{r_{1}}\cdots\sigma_{M,M+1}^{r_{M}}\sigma_{M+1,M+1}^{\frac{l_{M+1}-\sum_{i=1}^{M}r_{i}}{2}}m_{l_{1}-r_{1},\ldots,l_{M}-r_{M}}^{M}
\]
to (\ref{eq:Isserlis-M_plus_1}). How many of these pair partitions
$\pi$ for a given profile $(r_{1},\ldots,r_{M})$ are there?

For each $i=1,\ldots,M$, we have to pick $r_{i}$ from the $l_{M+1}$
copies of $Z_{M+1}$ to be paired with a $Z_{i}$ and $l_{M+1}-r$
to be paired with another $Z_{M+1}:\left(\begin{array}{c}
l_{M+1}\\
r_{1},\ldots,r_{M},l_{M+1}-r
\end{array}\right).$

We have to select for each $i=1,\ldots,M$ the corresponding $r_{i}$
copies of $Z_{i}:\left(\begin{array}{c}
l_{i}\\
r_{i}
\end{array}\right).$

For each $i=1,\ldots,M$, the selected $r_{i}$ $Z_{i}$ and $r_{i}$
$Z_{M+1}$ can be paired in $r_{i}!$ ways.

The remaining $(l_{M+1}-r)$ $Z_{M+1}$ can be ordered into $(l_{M+1}-r-1)!!=\frac{(l_{M+1}-r)!}{\left(\frac{l_{M+1}-r}{2}\right)!\,2^{\frac{l_{M+1}-r}{2}}}$
pairs. Multiplying all of these terms together and simplifying, we
obtain
\[
\frac{l_{1}!\cdots l_{M+1}!}{\left(\frac{l_{M+1}-\sum_{i=1}^{M}r_{i}}{2}\right)!\,2^{\frac{l_{M+1}-\sum_{i=1}^{M}r_{i}}{2}}r_{1}!\cdots r_{M}!(l_{1}-r_{1})!\cdots(l_{M}-r_{M})!}.
\]
Then we sum over all profiles $(r_{1},\ldots,r_{M})$ to obtain the
desired result.
\end{proof}
Given the covariance matrix $\Sigma$ defined in (\ref{eq:cov_matrix_Sigma}),
we set
\[
C:=I+\sqrt{\boldsymbol{\alpha}}\Sigma\sqrt{\boldsymbol{\alpha}}.
\]
Before proceeding with the proof of the CLT, we state an auxiliary
lemma.
\begin{lem}
\label{lem:summation_switch}Let $(L_{1},\ldots,L_{M+1})$ be a fixed
$(M+1)$-tuple of non-negative integers. Let $\{(r,r_{1},\ldots,r_{M},l_{1},\ldots l_{M+1})\}$
be a set of tuples of non-negative integers such that $r=\sum_{i=1}^{M}r_{i}$.
We define the following conditions:
\begin{align}
0 & \leq l_{i}\leq L_{i}\quad(i=1,\ldots,M+1)\label{eq:i1}\\
0 & \leq r\leq l_{M+1}\label{eq:i2}\\
0 & \leq r_{i}\leq l_{i}\wedge l_{M+1}\quad(i=1,\ldots,M)\label{eq:i3}\\
0 & \leq r_{i}\leq L_{i}\quad(i=1,\ldots,M)\label{eq:i4}\\
0 & \leq r\leq L_{M+1}\label{eq:i5}\\
r_{i} & \leq l_{i}\leq L_{i}\quad(i=1,\ldots,M)\label{eq:i6}\\
r & \leq l_{M+1}\leq L_{M+1}.\label{eq:i7}
\end{align}
Let
\begin{align*}
A & :=\{(r,r_{1},\ldots,r_{M},l_{1},\ldots l_{M+1})\,|\,\eqref{eq:i1},\eqref{eq:i2},\eqref{eq:i3}\text{ hold}\},\\
B & :=\{(r,r_{1},\ldots,r_{M},l_{1},\ldots l_{M+1})\,|\,\eqref{eq:i4},\eqref{eq:i5},\eqref{eq:i6},\eqref{eq:i7}\text{ hold}\}.
\end{align*}
Then we have $A=B$.
\end{lem}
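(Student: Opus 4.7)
The plan is to verify the set equality $A = B$ by showing both inclusions, which is entirely a matter of unpacking the seven conditions and chasing inequalities. I will use throughout the fact that $r = \sum_{i=1}^{M} r_i$ with all $r_i \geq 0$, which immediately yields the key comparison $r_i \leq r$ for each $i \in \{1,\ldots,M\}$.

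For the inclusion $A \subseteq B$, I would assume \eqref{eq:i1}--\eqref{eq:i3} and derive each of \eqref{eq:i4}--\eqref{eq:i7} by transitivity. Specifically, \eqref{eq:i3} gives $r_i \leq l_i$ and \eqref{eq:i1} gives $l_i \leq L_i$, which chained together yield both \eqref{eq:i4} and \eqref{eq:i6}; similarly \eqref{eq:i2} and the $i=M+1$ case of \eqref{eq:i1} chain to give \eqref{eq:i5} and \eqref{eq:i7}. The non-negativity parts are immediate from the corresponding parts of \eqref{eq:i1}--\eqref{eq:i3}.

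For the reverse inclusion $B \subseteq A$, I would assume \eqref{eq:i4}--\eqref{eq:i7} and recover \eqref{eq:i1}--\eqref{eq:i3}. Conditions \eqref{eq:i6} and \eqref{eq:i7} directly give the upper bounds in \eqref{eq:i1}, while the lower bounds $l_i \geq 0$ and $l_{M+1} \geq 0$ follow from $l_i \geq r_i \geq 0$ and $l_{M+1} \geq r \geq 0$. Condition \eqref{eq:i2} follows from the upper half of \eqref{eq:i7} together with $r \geq 0$. The only step that requires more than direct transitivity is the second half of \eqref{eq:i3}, namely $r_i \leq l_{M+1}$: here I would observe $r_i \leq \sum_{j=1}^{M} r_j = r \leq l_{M+1}$, using non-negativity of the $r_j$ for the first inequality and \eqref{eq:i7} for the second. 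The bound $r_i \leq l_i$ in \eqref{eq:i3} is immediate from \eqref{eq:i6}.

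Since every implication in both directions is elementary, I do not expect any genuine obstacle; the only step worth flagging is the one just described, where one must remember that $r$ is defined as the full sum and not merely as a free variable. I would therefore write the proof as two short paragraphs verifying the two inclusions, emphasising that single observation.
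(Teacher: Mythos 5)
Your proof is correct: the paper itself omits the argument, calling it ``a straightforward verification,'' and your two-inclusion check by transitivity is exactly that verification. The one point you rightly flag --- that $r_i\leq l_{M+1}$ in the second half of the third condition follows from $r_i\leq\sum_j r_j=r\leq l_{M+1}$ using non-negativity of the $r_j$ --- is the only step that is not pure chaining, and you handle it correctly.
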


\begin{proof}
We omit the proof which is a straightforward verification.
\end{proof}
Now we are prepared to prove the CLT.
\begin{proof}
[Proof of Theorem \ref{thm:CLT}]The proof proceeds by induction on
$M$. For $M=1$, we show that $\frac{S_{1}}{\sqrt{N_{1}}}$ is asymptotically
univariate normally distributed with variance $1+\alpha_{1}\sigma_{11}=C$, where $\sigma_{11}$ is the sole entry of the covariance matrix $\Sigma$ defined in equation (\ref{eq:cov_matrix_Sigma}), by the method of moments.

Let $K\in\mathbb{N}_{0}$. Then the moment $M_{K}^{1}$ is 0 if $K$
is odd. Otherwise it is asymptotically equal to
\begin{align*}
\sum_{k=0}^{K/2}\frac{\alpha_{1}^{k}}{(2k)!\left(\frac{K}{2}-k\right)!}\frac{K!}{2^{\frac{K}{2}-k}}m_{2k} & =\sum_{k=0}^{K/2}\frac{\alpha_{1}^{k}}{(2k)!\left(\frac{K}{2}-k\right)!}\frac{K!}{2^{\frac{K}{2}-k}}\frac{(2k)!}{k!2^{k}}\sigma_{11}^{k}\\
 & =\frac{K!}{(\frac{K}{2})!2^{\frac{K}{2}}}\sum_{k=0}^{K/2}\frac{\left(\frac{K}{2}\right)!(\alpha_{1}\sigma_{11})^{k}}{k!\left(\frac{K}{2}-k\right)!}=(K-1)!!(1+\alpha_{1}\sigma_{11})^{\frac{K}{2}}.
\end{align*}
Therefore, the claim holds for $M=1$. Let for the rest of this proof
\[
m_{l_{1},\ldots,l_{M+1}}^{M+1}:=m_{l_{1},\ldots,l_{M+1}}^{M+1}(\Sigma).
\]

Assume the claim holds for some $M\in\mathbb{N}$ and we show that
it also holds for $M+1$ by proving that for all $L_{1},\ldots,L_{M+1}\in\mathbb{N}_{0}$
the moments of order $(L_{1},\ldots,L_{M+1})$ satisfy the recursive
relation
\begin{align}
M_{L_{1},\ldots,L_{M+1}}^{M+1} & =\sum_{(r_{1},\ldots,r_{M})\in A}\frac{L_{1}!\cdots L_{M+1}!}{\left(\frac{L_{M+1}-\sum_{i=1}^{M}r_{i}}{2}\right)!2^{\frac{L_{M+1}-\sum_{i=1}^{M}r_{i}}{2}}r_{1}!\cdots r_{M}!(L_{1}-r_{1})!\cdots(L_{M}-r_{M})!} \nonumber \\
 & \quad\cdot(\sqrt{\alpha_{1}\alpha_{M+1}}\sigma_{1,M+1})^{r_{1}}\cdots(\sqrt{\alpha_{M}\alpha_{M+1}}\sigma_{M,M+1})^{r_{M}} \nonumber \\
 & \quad\cdot(1+\alpha_{M}\sigma_{M+1,M+1})^{\frac{L_{M+1}-\sum_{i=1}^{M}r_{i}}{2}}M_{L_{1}-r_{1},\ldots,L_{M}-r_{M}}^{M},\label{eq:M_plus_1_rec}
\end{align}
where $A$ is given in Lemma \ref{lem:summation_switch}. If the sum
$L_{1}+\cdots+L_{M+1}$ is odd, the moment $M_{L_{1},\ldots,L_{M+1}}^{M+1}$
is 0. If the sum is even, $M_{L_{1},\ldots,L_{M+1}}^{M+1}$ is asymptotically equal to
\[
\sum_{l_{1}=0}^{L_{1}}\cdots\sum_{l_{M+1}=0}^{L_{M+1}}\frac{L_{1}!\alpha_{1}^{\frac{l_{1}}{2}}}{l_{1}!\left(\frac{L_{1}-l_{1}}{2}\right)!2^{\frac{L_{1}-l_{1}}{2}}}\cdots\frac{L_{M+1}!\alpha_{M+1}^{\frac{l_{M+1}}{2}}}{l_{M+1}!\left(\frac{L_{M+1}-l_{M+1}}{2}\right)!2^{\frac{L_{M+1}-l_{M+1}}{2}}}m_{l_{1},\ldots,l_{M+1}}^{M+1},
\]
where each sum goes over $l_{i}$ with values with the same parity
as $L_{i}$. We next express $m_{l_{1},\ldots,l_{M+1}}^{M+1}$, which
is a moment of an $(M+1)$-dimensional normal distribution recursively
according to (\ref{eq:rec_Isserlis}). Then we switch summation indices
from $A$ to $B$ as defined by Lemma \ref{lem:summation_switch}:
\begin{align*}
= & \sum_{r=0}^{L_{M+1}}\sum_{(r_{1},\ldots,r_{M})}\sum_{l_{1}=r_{1}}^{L_{1}}\cdots\sum_{l_{M}=r_{M}}^{L_{M}}\sum_{l_{M+1}=r}^{L_{M+1}}\frac{L_{1}!\alpha_{1}^{\frac{l_{1}}{2}}}{\left(\frac{L_{1}-l_{1}}{2}\right)!2^{\frac{L_{1}-l_{1}}{2}}}\cdots\frac{L_{M+1}!\alpha_{M+1}^{\frac{l_{M+1}}{2}}}{\left(\frac{L_{M+1}-l_{M+1}}{2}\right)!2^{\frac{L_{M+1}-l_{M+1}}{2}}}\\
 & \cdot\frac{\sigma_{1,M+1}^{r_{1}}\cdots\sigma_{M,M+1}^{r_{M}}\sigma_{M+1,M+1}^{\frac{l_{M+1}-r}{2}}m_{l_{1}-r_{1},\ldots,l_{M}-r_{M}}^{M}}{\left(\frac{l_{M+1}-r}{2}\right)!2^{\frac{l_{M+1}-r}{2}}r_{1}!\cdots r_{M}!(l_{1}-r_{1})!\cdots(l_{M}-r_{M})!}.
\end{align*}
Now we switch indices once more: define for all $i=1,\ldots,M$ $s_{i}:=l_{i}-r_{i}$
and $s:=l_{M+1}-r$. After some lengthy calculations, we see that
the sum is
\begin{align*}
= & \sum_{r=0}^{L_{M+1}}\sum_{(r_{1},\ldots,r_{M})}\frac{L_{1}!\cdots L_{M}!}{r_{1}!\cdots r_{M}!(L_{1}-r_{1})!\cdots(L_{M}-r_{M})!\left(\frac{L_{M+1}-r}{2}\right)!2^{\frac{L_{M+1}-r}{2}}}\\
 & \cdot(\sqrt{\alpha_{1}\alpha_{M+1}}\sigma_{1,M+1})^{r_{1}}\cdots(\sqrt{\alpha_{M}\alpha_{M+1}}\sigma_{M,M+1})^{r_{M}}(1+\alpha_{M+1}\sigma_{M+1,M+1})^{\frac{L_{M+1}-r}{2}}M_{L_{1}-r_{1},\ldots,L_{M}-r_{M}}^{M}.
\end{align*}
As we can see, this last expression is equal to (\ref{eq:M_plus_1_rec}),
and therefore the recursive relation holds for the moments $M^{M+1}$
in terms of the moments $M^{M}$ holds. This implies that if for some
$M\in\mathbb{N}$ $\boldsymbol{S}^{M}$ is asymptotically normally
distributed, then so is $\boldsymbol{S}^{M+1}$. This concludes the
proof by induction.
\end{proof}
We have thus shown all results for the high temperature regime.

\subsection{Critical Regime}

In the critical regime, we proceed similarly to the above analysis
of the high temperature regime. We first prove the Law of Large Numbers
\ref{thm:LLN_crit_reg} and then the fluctuations results Theorems
\ref{Fluctuations_hom} and \ref{Fluctuations_het}.

We use the previously calculated correlations presented in Theorems
\ref{thm:EXY-hom} and \ref{thm:EXY-het_crit}. For the Law of Large
Numbers, the moments can be expressed analogously to Section \ref{subsec:High_Temp_Proof},
and the proof that these moments converge to 0 as $N\rightarrow\infty$ is
identical to the proof found there.

For the fluctuations results, we normalise the sums $\boldsymbol{S}$
by a power of $N$ equal to $\frac{3}{4}$. Then, we will see that
the summands with profile vectors belonging to $\Pi^{0(K_{\lambda})}$
for each group $\lambda$ contribute positive quantities to the moments.
We proceed similarly to the above by defining $M_{K_{1},\ldots,K_{M}}$
to be the moments of order $\mathbf{K}=(K_{1},\ldots,K_{M})$ of the
random vector
\[
\boldsymbol{S}:=\left(\frac{S_{1}}{N_{1}^{\frac{3}{4}}},\ldots,\frac{S_{M}}{N_{M}^{\frac{3}{4}}}\right),
\]
and setting $K:=\sum_{i=1}^{M}K_{i}$. The moments are
\begin{align}
M_{K_{1},\ldots,K_{M}} & =\frac{1}{N_{1}^{\frac{3K_{1}}{4}}}\cdots\frac{1}{N_{M}^{\frac{3K_{M}}{4}}}\sum_{\text{\ensuremath{\underbar{\ensuremath{\mathbf{r}}}}}}w_{\mathbf{K}}(\underbar{\ensuremath{\mathbf{r}}})\mathbb{E}\left(X_{\text{\ensuremath{\underbar{\ensuremath{\mathbf{r}}}}}}\right).\label{eq:moments_crit_fluct}
\end{align}
We separate the sum in (\ref{eq:moments_crit_fluct}) into two summands:
$A_{1}$, where $\underbar{\ensuremath{\mathbf{r}}}$ satisfies for
all $\lambda$ $\underline{r_{\lambda}}\in\Pi_{K_{\lambda}}^{(K_{\lambda})}$,
and $A_{2}$ all other $\underbar{\ensuremath{\mathbf{r}}}$. This
means $A_{1}$ contains all those profile vectors that describe multiindices
with no repeating indices at all, and $A_{2}$ all other profile vectors.

We claim
\begin{prop}
The limit of $A_{2}$ as $N\rightarrow\infty$ is $0$.
\end{prop}

\begin{proof}
We omit the proof, which proceeds along the same lines as before.
\end{proof}
It follows from this proposition that
\begin{align}
M_{K_{1},\ldots,K_{M}} & \approx A_{1}=\frac{1}{N_{1}^{\frac{3K_{1}}{4}}}\cdots\frac{1}{N_{M}^{\frac{3K_{M}}{4}}}w_{\mathbf{K}}(\underbar{\ensuremath{\mathbf{r}}})\mathbb{E}\left(X_{\text{\ensuremath{\underbar{\ensuremath{\mathbf{r}}}}}}\right),\label{eq:moments_crit_fluct_2}
\end{align}
where the profile vector $\underbar{\ensuremath{\mathbf{r}}}$ in
the last expression is the one that has $r_{\lambda1}=K_{\lambda}$
for all groups $\lambda$. The correlations $\mathbb{E}\left(X_{\text{\ensuremath{\underbar{\ensuremath{\mathbf{r}}}}}}\right)$
have the same powers of $N$ but different constants that depend on
$K_{\lambda}$ in the homogeneous and the heterogeneous case. We set
the constants $c_{K_{1},\ldots,K_{M}}$ equal to
\begin{equation}
\begin{cases}
12^{\frac{K_{1}+\cdots+K_{M}}{2}}\frac{\Gamma\left(\frac{K_{1}+\cdots+K_{M}+1}{4}\right)}{\Gamma\left(\frac{1}{4}\right)}, & \text{if }J\text{ is homogeneous,}\\
\left[\frac{12}{\alpha_{1}(L_{22}-\alpha_{2})^{2}+\alpha_{2}(L_{11}-\alpha_{1})^{2}}\right]^{\frac{K+Q}{4}}\\
\cdot(L_{11}-\alpha_{1})^{\frac{Q}{2}}(L_{22}-\alpha_{2})^{\frac{K}{2}}\frac{\Gamma\left(\frac{K+Q+1}{4}\right)}{\Gamma\left(\frac{1}{4}\right)}, & \text{if }M=2\text{ and }J\text{ is heterogeneous.}
\end{cases}\label{eq:constants_crit}
\end{equation}
With this definition,
\[
\eqref{eq:moments_crit_fluct_2}\approx c_{K_{1},\ldots,K_{M}}\alpha_{1}^{\frac{K_{1}}{4}}\cdots\alpha_{M}^{\frac{K_{M}}{4}},
\]
which proves Theorems \ref{Fluctuations_hom} and \ref{Fluctuations_het}.

\subsubsection{Proof of Theorem \ref{thm:Crit_Lin_Transform}}

We prove Theorem \ref{thm:Crit_Lin_Transform}. In a departure from
the method of moments, we use the de Finetti representation to show
the convergence of the sequence of characteristic functions of the
transformations given in Theorem \ref{thm:Crit_Lin_Transform}. We
prove the statement
\[
T_{N}:=\frac{\sqrt{L_{1}-\alpha_{1}}}{\sqrt{\alpha_{1}N_{1}}}S_{1}-\frac{\sqrt{L_{2}-\alpha_{2}}}{\sqrt{\alpha_{2}N_{2}}}S_{2}\underset{N\to\infty}{\Longrightarrow}\mathcal{N}\left(0,1+\frac{L_{1}-\alpha_{1}}{\alpha_{1}}+\frac{L_{2}-\alpha_{2}}{\alpha_{2}}\right)
\]
for heterogeneous coupling. The other statements can be shown analogously.

We set $\gamma_{\lambda}:=\sqrt{L_{\lambda}-\alpha_{\lambda}}$ for
both groups. The characteristic function of $T_{N}$ is defined as
\[
\varphi_{N}\left(t\right):=\mathbb{E}\left(\exp\left(it\left(\frac{\gamma_{1}}{\sqrt{\alpha_{1}N_{1}}}S_{1}-\frac{\gamma_{2}}{\sqrt{\alpha_{2}N_{2}}}S_{2}\right)\right)\right).
\]
We have to prove that $\varphi_{N}$ converges pointwise to the characteristic
function $\varphi\left(t\right)=\exp\left(-\left(1+\frac{\gamma_{1}^{2}}{\alpha_{1}}+\frac{\gamma_{2}^{2}}{\alpha_{2}}\right)\frac{t^{2}}{2}\right)$
of a centred normal distribution with variance equal to $1+\frac{\gamma_{1}^{2}}{\alpha_{1}}+\frac{\gamma_{2}^{2}}{\alpha_{2}}$.

By Theorem \ref{thm:Finetti}, the characteristic function
$\varphi_{N}$ can be expressed as
\begin{align*}
\varphi_{N}\left(t\right) & =Z^{-1}\int_{\mathbb{R}^{M}}E_{y}\left(\exp\left(it\left(\frac{\gamma_{1}}{\sqrt{\alpha_{1}N_{1}}}S_{1}-\frac{\gamma_{2}}{\sqrt{\alpha_{2}N_{2}}}S_{2}\right)\right)\right)\\
 & \quad\cdot e^{-N\left(\frac{1}{2}y^{T}Ly-\sum_{\lambda}\frac{N_{\lambda}}{N}\ln\cosh y_{\lambda}\right)}\mathrm{d}y.
\end{align*}
The normalisation constant $Z^{-1}$ is equal to $\int_{\mathbb{R}^{M}}e^{-N\left(\frac{1}{2}y^{T}Ly-\sum_{\lambda}\frac{N_{\lambda}}{N}\ln\cosh y_{\lambda}\right)}\mathrm{d}y$.
The transformations we will be doing to the integral above are equally
applied to $Z^{-1}$. Hence, we will be omitting multiplicative constants
that stem from variable switches in the integral.

The term $e^{-N\left(\frac{1}{2}y^{T}Ly-\sum_{\lambda}\frac{N_{\lambda}}{N}\ln\cosh y_{\lambda}\right)}$
can be approximated by a Taylor series. For simplicity's sake, we call the coordinates $\left(x,y\right)$ instead of
$\left(y_{1},y_{2}\right)$. We calculate the
derivatives of $F$ up to order 2 in order to approximate $F$ around
$\left(0,0\right)$:
\begin{align*}
F\left(x,y\right)= & \frac{1}{2}\left[\left(\gamma_{1}x-\gamma_{2}y\right)^{2}+\frac{\alpha_{1}}{6}x^{4}+\frac{\alpha_{2}}{6}y^{4}\right].
\end{align*}
Recall that conditionally on $\left(x,y\right)\in\mathbb{R}^{2}$,
the spin variables are all independent, within each group also identically
distributed. Therefore,
\begin{equation}
E_{\left(x,y\right)}\left(\exp\left(it\left(\frac{\gamma_{1}}{\sqrt{\alpha_{1}N_{1}}}S_{1}-\frac{\gamma_{2}}{\sqrt{\alpha_{2}N_{2}}}S_{2}\right)\right)\right)=\frac{E_{x}\left(\exp\left(it\frac{\gamma_{1}}{\sqrt{\alpha_{1}N_{1}}}S_{1}\right)\right)}{E_{y}\left(\exp\left(it\frac{\gamma_{2}}{\sqrt{\alpha_{2}N_{2}}}S_{2}\right)\right)},\label{eq:cond_exp_asymp}
\end{equation}
and
\begin{align*}
E_{x}\left(\exp\left(it\frac{\gamma_{1}}{\sqrt{\alpha_{1}N_{1}}}S_{1}\right)\right) & =\exp\left(it\frac{\gamma_{1}x}{\sqrt{\alpha_{1}N_{1}}}N_{1}\right)E_{y_{1}}\left(\exp\left(it\frac{\gamma_{1}\left(X_{11}-x\right)}{\sqrt{\alpha_{1}N_{1}}}\right)\right)^{N_{1}}\\
 & \approx\exp\left(it\gamma_{1}x\sqrt{N}\right)\left(1-\left(1-x^{2}\right)\frac{\gamma_{1}^{2}t^{2}}{2\alpha_{1}N_{1}}+O\left(\frac{1}{N_{1}^{3/2}}\right)\right)^{N_{1}},
\end{align*}
where we used a Taylor series for the exponential function. This last
term is asymptotically equal to
\[
\exp\left(it\gamma_{1}x\sqrt{N}\right)\cdot\exp\left(-\left(1-x^{2}\right)\frac{\gamma_{1}^{2}t^{2}}{2\alpha_{1}}\right).
\]
We calculate an asymptotic expression similarly for group two and
obtain for the conditional expectation (\ref{eq:cond_exp_asymp})
\[
\exp\left(it\left(\gamma_{1}x-\gamma_{2}y\right)\sqrt{N}\right)\cdot\exp\left(-\left(\left(1-x^{2}\right)\frac{\gamma_{1}^{2}}{\alpha_{1}}+\left(1-y^{2}\right)\frac{\gamma_{1}^{2}}{\alpha_{1}}\right)\frac{t^{2}}{2}\right).
\]
When we apply Laplace's method to approximate the value of the integral,
the factor
\[
\exp\left(-\left(\left(1-x^{2}\right)\frac{\gamma_{1}^{2}}{\alpha_{1}}+\left(1-y^{2}\right)\frac{\gamma_{1}^{2}}{\alpha_{1}}\right)\frac{t^{2}}{2}\right)
\]
contributes only with its value at $\left(x,y\right)=\left(0,0\right)$.
Proceeding from the de Finetti representation of the characteristic
function $\varphi_{N}$ given above, we substitute
\begin{align*}
u & :=\left(\gamma_{1}x-\gamma_{2}y\right)\sqrt{N}\quad\textup{and}\quad v:=\left(\gamma_{1}x+\gamma_{2}y\right)N^{1/4},
\end{align*}
and the integral becomes (up to a multiplicative constant)
\begin{align*}
 & \quad\int_{\mathbb{R}^{2}}\exp\left(itu\right)\exp\left(-\left(\frac{\gamma_{1}^{2}}{\alpha_{1}}+\frac{\gamma_{2}^{2}}{\alpha_{2}}\right)\frac{t^{2}}{2}\right)e^{-\frac{1}{2}\left[u^{2}+\frac{\alpha_{1}}{2^{5}\cdot3\gamma_{1}^{2}}\left(\frac{u}{N^{1/4}}+v\right)^{4}+\frac{\alpha_{2}}{2^{5}\cdot3\gamma_{2}^{2}}\left(v-\frac{u}{N^{1/4}}\right)^{4}\right]}\mathrm{d}u\,\mathrm{d}v\\
 & \approx\exp\left(-\left(\frac{\gamma_{1}^{2}}{\alpha_{1}}+\frac{\gamma_{2}^{2}}{\alpha_{2}}\right)\frac{t^{2}}{2}\right)\int_{\mathbb{R}^{2}}\exp\left(itu\right)e^{-\frac{1}{2}\left[u^{2}+\frac{v^{4}}{2^{5}\cdot3}\left(\frac{\alpha_{1}}{\gamma_{1}^{2}}+\frac{\alpha_{2}}{\gamma_{2}^{2}}\right)\right]}\mathrm{d}u\,\mathrm{d}v
\end{align*}
by dominated convergence. Now note that the integral above can be
separated into two factors. The factor $\int_{\mathbb{R}}e^{-\frac{v^{4}}{2^{6}\cdot3}\left(\frac{\alpha_{1}}{\gamma_{1}^{2}}+\frac{\alpha_{2}}{\gamma_{2}^{2}}\right)}\mathrm{d}v$
cancels out with the corresponding term in the normalisation constant
$Z^{-1}$. The only term left in $Z^{-1}$ is $\int_{\mathbb{R}^{2}}e^{-\frac{1}{2}u^{2}}\mathrm{d}u$.
Thus we get
\begin{align*}
\varphi_{N}\left(t\right) & \approx\exp\left(-\left(\frac{\gamma_{1}^{2}}{\alpha_{1}}+\frac{\gamma_{2}^{2}}{\alpha_{2}}\right)\frac{t^{2}}{2}\right)\frac{1}{\sqrt{2\pi}}\int_{\mathbb{R}^{2}}\exp\left(itu\right)e^{-\frac{1}{2}u^{2}}\mathrm{d}u\\
 & =\exp\left(-\left(1+\frac{\gamma_{1}^{2}}{\alpha_{1}}+\frac{\gamma_{2}^{2}}{\alpha_{2}}\right)\frac{t^{2}}{2}\right).
\end{align*}
This concludes the proofs of the critical regime results.

\subsection{Low Temperature Regime}

The limit Theorem \ref{thm:LLN_het_low_temp}
for the magnetisations in the low temperature regime easily follows
from the correlations. The normalised sums are
\[
\boldsymbol{S}:=\left(\frac{S_{1}}{N_{1}},\ldots,\frac{S_{M}}{N_{M}}\right),
\]
and we define $M_{K_{1},\ldots,K_{M}}$ to be the moments of order
$\mathbf{K}=(K_{1},\ldots,K_{M})$ of this random vector, and set
$K:=\sum_{i=1}^{M}K_{i}$. These moments equal
\begin{align}
M_{K_{1},\ldots,K_{M}} & =\mathbb{E}\left(\left(\frac{S_{1}}{N_{1}}\right)^{K_{1}}\cdots\left(\frac{S_{M}}{N_{M}}\right)^{K_{M}}\right)\nonumber \\
 & =\frac{1}{N_{1}}\cdots\frac{1}{N_{M}}\sum_{\text{\ensuremath{\underbar{\ensuremath{\mathbf{r}}}}}}w_{\mathbf{K}}(\underbar{\ensuremath{\mathbf{r}}})\mathbb{E}\left(X_{\text{\ensuremath{\underbar{\ensuremath{\mathbf{r}}}}}}\right).\label{eq:moments_LLN_low}
\end{align}
The correlations $\mathbb{E}\left(X_{\text{\ensuremath{\underbar{\ensuremath{\mathbf{r}}}}}}\right)$
are constant with respect to $N$ for such $\underbar{\ensuremath{\mathbf{r}}}$
where each group $\lambda$ has a profile vector $\underline{r_{\lambda}}$
with $r_{\lambda1}=K_{\lambda}$. For all other types of profiles
$\underbar{\ensuremath{\mathbf{r}}}$, the summands converge to 0.

We therefore separate the sum in (\ref{eq:moments_LLN_low}) into
two summands:
\begin{align*}
A_{1} & =\frac{1}{N_{1}^{K_{1}}\cdots N_{M}^{K_{M}}}\sum_{\text{\ensuremath{\underbar{\ensuremath{\mathbf{r}}}}:\ensuremath{\forall\lambda:\underline{r_{\lambda}}\in\Pi_{K_{\lambda}}^{(K_{\lambda})}}}}w_{\mathbf{K}}(\underbar{\ensuremath{\mathbf{r}}})\mathbb{E}\left(X_{\text{\ensuremath{\underbar{\ensuremath{\mathbf{r}}}}}}\right),\\
A_{2} & =\frac{1}{N_{1}^{K_{1}}\cdots N_{M}^{K_{M}}}\sum_{\text{\ensuremath{\underbar{\ensuremath{\mathbf{r}}}}:\ensuremath{\exists\lambda:\underline{r_{\lambda}}\notin\Pi_{K_{\lambda}}^{(K_{\lambda})}}}}w_{\mathbf{K}}(\underbar{\ensuremath{\mathbf{r}}})\mathbb{E}\left(X_{\text{\ensuremath{\underbar{\ensuremath{\mathbf{r}}}}}}\right).
\end{align*}
We claim
\begin{prop}
The limit of $A_{2}$ as $N\rightarrow\infty$ is $0$.
\end{prop}

\begin{proof}
The proof is very similar to that of Proposition \ref{prop:Pi_plus_no_contrib}. We choose to
omit it.

\end{proof}
It follows from this proposition that
\begin{align*}
M_{K_{1},\ldots,K_{M}} & \approx A_{1}\approx m^{{\bf K}},
\end{align*}
where $m$ stands for $\left(m\left(\beta\right),\ldots,m\left(\beta\right)\right)$
if $J$ is homogeneous and $m:=\tanh\bar{m}$, the componentwise tanh of $\bar{m}$, if $J$ is heterogeneous
and $\bar{m}$ the minimum of $F$ in the positive orthant. This concludes
the proof of Theorems \ref{thm:LLN_hom_low_temp} and \ref{thm:LLN_het_low_temp}.

\subsubsection{\label{sec:Proof_Fluctuations}Proof of Theorem \ref{thm:cond_CLT}}

Now we turn to the proof of the conditional CLT presented in Theorem
\ref{thm:cond_CLT}. We prove the statement for heterogeneous coupling
matrices conditional on $S_{\nu}>0$ for each group $\nu$. The other
statements can be shown analogously. Let $\bar{m}$ and $m$ be defined
as in Theorem \ref{thm:cond_CLT}. We define the function $\tilde{F}$
such that its value at the global minimum is 0:
\begin{align*}
\tilde{F}(y) & :=F(y)-F\left(\bar{m}\right).
\end{align*}
We define the sequence of normalised random vectors
\[
\Sigma:=\left(\frac{1}{\sqrt{N_{1}}}\sum_{i_{1}=1}^{N_{1}}\left(X_{1i_{1}}-m_{1}\right),\ldots,\frac{1}{\sqrt{N_{M}}}\sum_{i_{M}=1}^{N_{M}}\left(X_{Mi_{M}}-m_{M}\right)\right),
\]
as well as the sequence of random variables
\begin{align*}
\chi^{+} & :=\begin{cases}
1, & \text{if for all groups }\nu\quad S_{\nu}>0,\\
0, & \text{otherwise.}
\end{cases}
\end{align*}
We have to calculate the moments of all orders $\mathbf{K}=\left(K_{1},\ldots,K_{M}\right)\in\mathbb{N}_{0}^{M}$
of the random vectors $\left(\Sigma\chi^{+}\right)^{\mathbf{K}}$.
Note that $\left(\Sigma\chi^{+}\right)^{\mathbf{K}}=\Sigma^{\mathbf{K}}\chi^{+}$
holds for all $\mathbf{K}\neq\mathbf{0}$. We define the sequence
of integrals
\[
\mathcal{Z}_{\mathbf{K}}:=\int_{\IR^{M}}e^{-N\tilde{F}(y)}E_{y}^{\otimes N}\left(\Sigma^{\mathbf{K}}\chi^{+}\right)\text{d}y.
\]
The moments can thus be calculated by the formula
\[
\mathbb{E}\left(\left(\Sigma\chi^{+}\right)^{\mathbf{K}}\right)=\frac{\mathcal{Z}_{\mathbf{K}}}{\mathcal{Z}_{\mathbf{0}}}.
\]
We calculate $\mathcal{Z}_{\mathbf{K}}$ asymptotically by dividing
it into three separate parts and showing that only one of these contributes
asymptotically to the value of $\mathcal{Z}_{\mathbf{K}}$. For this
purpose, we define these subsets of $\IR^{M}$:
\begin{align*}
A_{-} & :=\prod_{\lambda=1}^{M}\left(-\infty,-\frac{\bar{m}_{\lambda}}{2}\right],\quad A_{+}:=\prod_{\lambda=1}^{M}\left[\frac{\bar{m}_{\lambda}}{2},\infty\right),\quad A_{0}:=\IR^{M}\backslash\left(A_{-}\cup A_{+}\right),
\end{align*}
as well as the integrals over these subsets:
\begin{align*}
\mathcal{Z}_{\mathbf{K}}^{-} & :=\int_{A_{-}}e^{-N\tilde{F}(y)}E_{y}^{\otimes N}\left(\Sigma^{\mathbf{K}}\chi^{+}\right)\text{d}y,\\
\mathcal{Z}_{\mathbf{K}}^{0} & :=\int_{A_{0}}e^{-N\tilde{F}(y)}E_{y}^{\otimes N}\left(\Sigma^{\mathbf{K}}\chi^{+}\right)\text{d}y,\\
\mathcal{Z}_{\mathbf{K}}^{+} & :=\int_{A_{+}}e^{-N\tilde{F}(y)}E_{y}^{\otimes N}\left(\Sigma^{\mathbf{K}}\chi^{+}\right)\text{d}y.
\end{align*}
We state a result concerning the speed of convergence of sums of independent
random variables which we will use in this proof.
\begin{lem}
\label{lem:iiid_fast_convergence}Let $\left(X_{n}\right)$ be a sequence
of independent random variables on a probability space $(\Omega,\mathcal{F},P)$.
Assume that for each $m\in\mathbb{N}_{0}$ there is a constant $T_{m}$
such that
\[
\sup_{n\in\mathbb{N}}E\left(X_{n}^{m}\right)\leq T_{m}<\infty.
\]
Then, for each $V\in\IN$, there is a constant $C_{V}$ such that
for any $a>0$
\[
P\left(\left|\frac{1}{N}\sum_{n=1}^{N}\left(X_{n}-E(X_{n})\right)\right|>a\right)\leq\frac{C_{V}}{a^{2V}N^{V}}.
\]
\end{lem}

\begin{proof}
This is Theorem 3.24 in \cite{MM}.
\end{proof}
We first show
\begin{lem}
\label{lem:Z_-1}$\mathcal{Z}_{\mathbf{K}}^{-}$ converges to $0$
faster than any power of $N$.
\end{lem}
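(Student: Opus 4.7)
The plan is to exploit the fact that on $A_{-1}$ the conditional means of the spins given $t$ are all strictly negative (bounded away from $0$ by $-m_\nu^*/2$), whereas $\chi^+=1$ demands every group sum $S_\nu$ be positive. This is a large-deviation event, so its probability will beat any polynomial.

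By the de Finetti representation (Proposition \ref{prop:de_Finetti_het}), conditionally on $t=(t_1,\ldots,t_M)$ (equivalently on $y$ with $t_\nu=\tanh y_\nu$) the spin variables $X_{\nu i}$ are mutually independent with $E_t(X_{\nu i})=t_\nu$; moreover, since $X_{\nu i}=\pm 1$, all their moments are bounded by $1$ uniformly in $t$. For $t\in A_{-1}$ we have $t_\nu\le -m_\nu^*/2<0$ for every $\nu$, so the event $\{S_\nu>0\}$ forces
\[
\left|\frac{1}{N_\nu}\sum_{i=1}^{N_\nu}\bigl(X_{\nu i}-t_\nu\bigr)\right|\;>\;|t_\nu|\;\ge\;\frac{m_\nu^*}{2}.
\]
Applying Proposition \ref{prop:iiid_fast_convergence} to the group-$\nu$ spins with $a=m_\nu^*/2$, and using the uniform moment bound to take the constant $C_V$ independent of $t\in A_{-1}$, one gets, for every $V\in\mathbb{N}$,
\[
E_t(\chi^+)\;=\;P_t(\chi^+=1)\;\le\;P_t(S_\nu>0)\;\le\;\frac{C_V}{(m_\nu^*/2)^{2V}\,N_\nu^{V}}.
\]

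Next, bound $\Sigma^{\mathbf{K}}$ trivially: since $|X_{\nu i}-m_\nu^*|\le 2$, we have $|\Sigma_\nu|\le 2\sqrt{N_\nu}$, hence $|\Sigma^{\mathbf{K}}|\le 2^{K}\prod_\nu N_\nu^{K_\nu/2}\le C\,N^{K/2}$ with $K:=\sum K_\nu$. Combining with the large-deviation bound gives, uniformly in $t\in A_{-1}$,
\[
|E_t(\Sigma^{\mathbf{K}}\chi^+)|\;\le\;C'_V\,N^{K/2-V}.
\]
The remaining ``geometric'' factor $\int_{A_{-1}}e^{-N\tilde F(t)}\prod_\nu(1-t_\nu^2)^{-1}\,\mathrm{d}t$ is of polynomial order in $N$: on $A_{-1}$ the function $\tilde F$ attains its minimum at $-m^*$ (which is interior to $A_{-1}$) with value $0$, and $\tilde F(t)\to+\infty$ quadratically in the logarithm as $t_\nu\to\pm 1$, which dominates the mild singularity of $(1-t_\nu^2)^{-1}$; a routine Laplace estimate (Proposition \ref{prop:Laplace_multivariate}) bounds the integral by $O(N^{-M/2})$.

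Putting the three estimates together yields $|\mathcal{Z}_{-1}(\mathbf{K})|\le C''_V\,N^{K/2-V}$ for every $V\in\mathbb{N}$; choosing $V$ arbitrarily large gives the claim. The main technical point to check is the uniformity in $t\in A_{-1}$ of Proposition \ref{prop:iiid_fast_convergence}, which is immediate from $|X_{\nu i}|\equiv 1$; a secondary nuisance is integrability at the boundary of $[-1,1]^M$, which is handled by the super-polynomial suppression coming from $e^{-N\tilde F}$.
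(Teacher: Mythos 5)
Your proposal is correct and follows essentially the same route as the paper: the key step in both is to observe that on $A_{-1}$ the conditional means $t_\nu\le -m_\nu^*/2$ are bounded away from $0$, so $\{\chi^+=1\}$ is a large-deviation event whose conditional probability is controlled by Proposition \ref{prop:iiid_fast_convergence} uniformly in $t$, while $\Sigma^{\mathbf{K}}$ admits the trivial polynomial bound $O(N^{K/2})$ and the leftover integral is at worst of polynomial (in fact bounded) order. The only cosmetic differences are that the paper combines the two factors via Cauchy--Schwarz rather than your $L^\infty$--$L^1$ pairing, and bounds the residual integral by its value at $N=1$ rather than by a Laplace estimate; neither changes the substance.
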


\begin{proof}
Let $Q\in\mathbb{N}$. We prove that $E_{y}^{\otimes N}\left(\Sigma^{\mathbf{K}}\chi^{+}\right)$
goes to 0 faster than $1/N^{Q}$.

Given a $y\in A_{-}$, the Cauchy-Schwarz inequality states that
\[
E_{y}^{\otimes N}\left(\Sigma^{\mathbf{K}}\chi^{+}\right)\leq E_{y}^{\otimes N}\left(\Sigma^{2\mathbf{K}}\right)^{\frac{1}{2}}P_{y}^{\otimes N}\left(\chi^{+}=1\right)^{\frac{1}{2}}.
\]
We calculate upper bounds for both expressions on the right hand side
of the above inequality. Due to the conditional independence of the
random variables $X_{\lambda i}$, we have
\begin{align}
E_{y}^{\otimes N}\left(\Sigma^{2\mathbf{K}}\right) & =E_{y}^{\otimes N}\left(\left(\frac{1}{\sqrt{N_{1}}}\sum_{i_{1}=1}^{N_{1}}\left(X_{1i_{1}}-m_{1}\right)\right)^{2K_{1}}\right)\cdots E_{y}^{\otimes N}\left(\left(\frac{1}{\sqrt{N_{M}}}\sum_{i_{M}=1}^{N_{M}}\left(X_{Mi_{M}}-m_{M}\right)\right)^{2K_{M}}\right).\label{eq:cond_expec_Sigma_2K}
\end{align}
The conditionally i.i.d.\! random variables $X_{\nu i}$ for fixed $\nu$ and $i=1,\ldots,N_\nu$ each have a
conditional expectation of $y_{\nu}\leq-\frac{m_{\nu}}{2}$, so a
lower bound on the distance between $E_{y}\left(X_{\nu i}\right)$
and $m_{\nu}$ is given by
\begin{align*}
\left|E_{y}\left(X_{\nu i}\right)-m_{\nu}\right| & =\left|y_{\nu}-m_{\nu}\right|\geq\frac{3}{2}m_{\nu}>0.
\end{align*}
Thus, we cannot expect these expectations to converge to 0. However, we have the following upper bound for each factor in (\ref{eq:cond_expec_Sigma_2K}):
\begin{align*}
\left(\frac{1}{\sqrt{N_{\nu}}}\sum_{i_{\nu}=1}^{N_{\nu}}\left(X_{\nu i_{\nu}}-m_{\nu}\right)\right)^{2K_{\nu}} & \leq\frac{1}{N_{\nu}^{K_{\nu}}}\left(\sum_{i_{\nu}=1}^{N_{\nu}}\left(\left|X_{\nu i_{\nu}}\right|+\left|m_{\nu}\right|\right)\right)^{2K_{\nu}}\\
 & \leq\frac{1}{N_{\nu}^{K_{\nu}}}\left(2N_{\nu}\right)^{2K_{\nu}}=2^{2K_{\nu}}N_{\nu}^{K_{\nu}}.
\end{align*}
Thus, we obtain the upper bound
\begin{align}
E_{y}^{\otimes N}\left(\Sigma^{2\mathbf{K}}\right)^{\frac{1}{2}} & \leq\left(\prod_{\nu=1}^{M}2^{2K_{\nu}}N_{\nu}^{K_{\nu}}\right)^{\frac{1}{2}}\approx 2^{K}\prod_{\nu=1}^{M}\alpha_{\nu}^{\frac{K_{\nu}}{2}}N^{\frac{K}{2}},\label{eq:upper_Sigma_2K}
\end{align}
where we set for the rest of this section $K:=\sum_{\nu=1}^{M}K_{\nu}.$
We use Lemma \ref{lem:iiid_fast_convergence} in the third step below
to obtain an upper bound for $P_{y}^{\otimes N}\left(\chi^{+}=1\right)$:
\begin{align}
P_{y}^{\otimes N}\left(\chi^{+}=1\right) & =\prod_{\nu=1}^{M}P_{y}^{\otimes N}\left(\frac{1}{N_{\nu}}\sum_{i_{\nu}=1}^{N_{\nu}}\left(X_{\nu i_{\nu}}-y_{\nu}\right)+y_{\nu}>0\right)\nonumber \\
 & \leq\prod_{\nu=1}^{M}P_{y}^{\otimes N}\left(\left|\frac{1}{N_{\nu}}\sum_{i_{\nu}=1}^{N_{\nu}}\left(X_{\nu i_{\nu}}-t_{\nu}\right)\right|>-y_{\nu}\right)\nonumber \\
 & \leq\prod_{\nu=1}^{M}\frac{C_{2V}}{\left(-y_{\nu}\right)^{4V}N_{\nu}^{2V}}\leq\prod_{\nu=1}^{M}\frac{C_{2V}}{\left(-\frac{m_{\nu}}{2}\right)^{4V}N_{\nu}^{2V}}\nonumber \\
 & =\prod_{\nu=1}^{M}\frac{2^{4V}C_{2V}}{\left(m_{\nu}\right)^{4V}\alpha_{\nu}^{2V}}\cdot\frac{1}{N^{2MV}}.\label{eq:upper_chi_plus}
\end{align}
For a given $\mathbf{K}$, choose
$V(\mathbf{K}):=\frac{K}{2}+Q+1$. Then, using the upper bounds in
(\ref{eq:upper_Sigma_2K}) and (\ref{eq:upper_chi_plus}), we obtain
\begin{align*}
E_{y}^{\otimes N}\left(\Sigma^{\mathbf{K}}\chi^{+}\right) & \leq cN^{\frac{K}{2}}\frac{1}{N^{MV(\mathbf{K})}}=cN^{-(M-1)\frac{K}{2}-M(Q+1)},
\end{align*}
which goes to 0 faster than $1/N^{Q}$ for any number of groups $M\in\mathbb{N}$. Set $Q':=(M-1)\frac{K}{2}+M(Q+1)$.
The constant $c$ depends only on $\mathbf{K}$ and $Q$ but not on
$N$.

Thus, we see that for any $Q\in\mathbb{N}$
\begin{align*}
\mathcal{Z}_{\mathbf{K}}^{-} & \leq c\frac{1}{N^{Q'}} \int_{A_{-}}e^{-N\tilde{F}(y)}\text{d}y\rightarrow0\text{ as }N\rightarrow\infty.
\end{align*}
Note that the sequence of integrals above converges. This concludes the proof of the lemma.
\end{proof}
We next show
\begin{lem}
$\mathcal{Z}_{\mathbf{K}}^{0}$ converges to $0$ exponentially fast
in $N$.
\end{lem}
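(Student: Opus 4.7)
The plan is to exploit the fact that $\tilde F$ is bounded away from $0$ on $A_0$, so the factor $e^{-N\tilde F(t)}$ itself already delivers exponential decay, and the remaining conditional expectation is at worst polynomial in $N$. Concretely, under each of the three hypotheses of Theorem \ref{THM:FLUCTUATIONS_LOW_TEMP}, the function $\tilde F$ defined in (\ref{eq:F_tilde_fluctuations}) has exactly two global minima, at $\pm m^*$, with value $0$. Since $m_\nu^* > 0$ for each $\nu$, we have $m_\nu^* > m_\nu^*/2$ strictly, so $m^*$ lies in the interior of $A_1$ and $-m^*$ in the interior of $A_{-1}$; in particular $\pm m^* \notin A_0$.

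The main technical step is to show $B := \inf_{t \in A_0}\tilde F(t) > 0$. The set $A_0$ is not compact, but because $L = J^{-1}$ is positive definite, the quadratic form in $\ln\frac{1+t_\nu}{1-t_\nu}$ appearing in $\hat F$ dominates all other terms, so $\tilde F(t) \to +\infty$ as $t \to \partial[-1,1]^M$. Thus for any fixed $C > 0$ the set $A_0 \cap \{\tilde F \leq C\}$ is compact, and on it $\tilde F$ attains an infimum which must be strictly positive because the only zeros of $\tilde F$ lie outside $A_0$. Off this compact set $\tilde F \geq C$ trivially, so $\inf_{A_0}\tilde F \geq \min(B_{\mathrm{compact}}, C) > 0$.

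Given $B$, I would bound the integrand crudely. Since $|X_{\nu i} - m_\nu^*| \leq 2$ deterministically and $\chi^+ \leq 1$, the same estimate as in the previous lemma gives
\[
\bigl|E_t\bigl(\Sigma^{\mathbf K}\chi^+\bigr)\bigr| \leq 2^K \prod_{\nu=1}^M N_\nu^{K_\nu/2}
\]
uniformly in $t \in A_0$, where $K = \sum_\nu K_\nu$. Splitting $e^{-N\tilde F(t)} = e^{-(N-N_0)\tilde F(t)} e^{-N_0\tilde F(t)} \leq e^{-(N-N_0)B}\,e^{-N_0\tilde F(t)}$ for a fixed $N_0 \in \mathbb N$ yields
\[
\mathcal Z_0(\mathbf K) \leq e^{-(N-N_0)B}\,2^K \prod_\nu N_\nu^{K_\nu/2}\int_{A_0} e^{-N_0\tilde F(t)}\prod_{\nu=1}^M \frac{dt_\nu}{1-t_\nu^2}.
\]

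The last step is verifying that the remaining integral is finite for some (indeed any sufficiently large) $N_0$. Here the singularity $(1-t_\nu^2)^{-1}$ at $t_\nu = \pm 1$ is neutralised by substituting $s_\nu := \tfrac12\ln\frac{1+t_\nu}{1-t_\nu}$, which turns $dt_\nu/(1-t_\nu^2)$ into $ds_\nu$ and turns the quadratic part of $\tilde F$ into the positive definite form $\tfrac12 s^{\!T} L s$; the logarithmic $\alpha_\lambda\ln(1-t_\lambda^2)$ terms only contribute linearly in $|s_\nu|$ and are dominated. The resulting integrand is Gaussian-like on $\mathbb R^M$ and integrable for $N_0 \geq 1$. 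Combining everything, $\mathcal Z_0(\mathbf K) \leq C(\mathbf K)\,N^{K/2}\,e^{-(N-N_0)B}$, proving exponential decay. The only genuinely delicate point is the strict positivity of $B$, and this is precisely what the uniqueness-of-minima hypotheses of Theorem \ref{THM:FLUCTUATIONS_LOW_TEMP} are designed to guarantee.
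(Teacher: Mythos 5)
Your proof is correct, and at the top level it is the same argument as the paper's: both exploit the gap $\inf_{A_0}\tilde F\geq\delta>0$ (which exists because the only zeros $\pm m^*$ of $\tilde F$ lie in the interiors of $A_{\pm1}$) to extract an exponential factor, and both bound the conditional expectation by the deterministic estimate $|\Sigma^{\mathbf K}\chi^+|\leq 2^K\prod_\nu N_\nu^{K_\nu/2}$, so that only a polynomial in $N$ multiplies $e^{-N\delta}$. (The paper routes this through Cauchy--Schwarz, $E_t(\Sigma^{\mathbf K}\chi^+)\leq E_t(\Sigma^{2\mathbf K})^{1/2}P_t(\chi^+=1)^{1/2}$, and then drops the probability factor; your direct bound is equivalent and slightly simpler.) Where you genuinely diverge is in the last step, and your version is the more careful one. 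The paper pulls the entire factor $e^{-N\tilde F(t)}\leq e^{-N\delta}$ out of the integral and then bounds $\int_{A_0}\prod_\lambda(1-t_\lambda^2)^{-1}\,\mathrm{d}t$ by a constant; but as written $A_0=[-1,1]^M\setminus(A_{-1}\cup A_1)$ contains points with some $t_\nu$ arbitrarily close to $\pm1$ (e.g.\ $t_1\to1$ while $t_2=0$), where $\prod_\lambda(1-t_\lambda^2)^{-1}$ is not integrable, so that integral actually diverges and the stated constant bound does not hold. Your device of writing $e^{-N\tilde F}\leq e^{-(N-N_0)B}e^{-N_0\tilde F}$ and checking, via the substitution $s_\nu=\tfrac12\ln\frac{1+t_\nu}{1-t_\nu}$ (under which $\mathrm{d}t_\nu/(1-t_\nu^2)=\mathrm{d}s_\nu$ and $\hat F$ becomes $\tfrac12 s^TLs-\sum_\nu\alpha_\nu\ln\cosh s_\nu$, coercive since $L$ is positive definite and $\ln\cosh$ grows only linearly), that $\int_{A_0}e^{-N_0\tilde F}\prod_\nu(1-t_\nu^2)^{-1}\,\mathrm{d}t<\infty$ repairs exactly this point; it is also consistent with how the paper itself handles the analogous tail integrals elsewhere (e.g.\ for $I_2$ in the Laplace-method arguments). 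Your compactness-plus-coercivity justification that $B=\inf_{A_0}\tilde F>0$ likewise supplies a detail the paper only asserts. The resulting bound $\mathcal Z_0(\mathbf K)\leq C(\mathbf K)N^{K/2}e^{-(N-N_0)B}$ gives the claimed exponential decay.
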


\begin{proof}
Since $\tilde{F}$ has exactly two global minima at $\pm\bar{m}\notin A_{0}$,
there is a $\delta>0$ such that $\tilde{F}\left(\pm\bar{m}\right)=0<\delta\leq\tilde{F}(y)$
for all $y\in A_{0}$. Similarly to the proof of the last lemma, the
expectation $E_{y}^{\otimes N}\left(\Sigma^{2\mathbf{K}}\right)$
does not converge to zero for $y\in A_{0}$, as the distance between
$E_{y}\left(X_{\nu i}\right)$ and $m_{\nu}$ is at least
\begin{align*}
\left|E_{y}\left(X_{\nu i}\right)-m_{\nu}\right| & =\left|y_{\nu}-m_{\nu}\right|\geq\frac{m_{\nu}}{2}>0.
\end{align*}
However, we can once again use the upper bound for $E_{y}^{\otimes N}\left(\Sigma^{2\mathbf{K}}\right)^{\frac{1}{2}}$
given by (\ref{eq:upper_Sigma_2K}). We calculate
\begin{align*}
\mathcal{Z}_{\mathbf{K}}^{0} & \leq\int_{A_{0}}e^{-N\delta}E_{y}^{\otimes N}\left(\Sigma^{2\mathbf{K}}\right)^{\frac{1}{2}}P_{y}^{\otimes N}\left(\chi^{+}=1\right)^{\frac{1}{2}}\text{d}y\\
 & \leq2^{K}\prod_{\nu=1}^{M}\alpha_{\nu}^{\frac{K_{\nu}}{2}}N^{\frac{K}{2}}e^{-N\delta}\int_{A_{0}}\text{d}y.
\end{align*}
This last expression converges to 0 exponentially fast as $N\rightarrow\infty$.
\end{proof}
We turn our attention to $\mathcal{Z}_{\mathbf{K}}^{+}$. Using
\[
E_{y}^{\otimes N}\left(\Sigma^{\mathbf{K}}\chi^{+}\right)=E_{y}^{\otimes N}\left(\Sigma^{\mathbf{K}}\right)-E_{y}^{\otimes N}\left(\Sigma^{\mathbf{K}}\left(1-\chi^{+}\right)\right),
\]
we divide $\mathcal{Z}_{\mathbf{K}}^{+}$ into two parts
\begin{align}
\mathcal{Z}_{\mathbf{K}}^{+} & =\int_{A_{+}}e^{-N\tilde{F}(y)}E_{y}^{\otimes N}\left(\Sigma^{\mathbf{K}}\right)\text{d}y\label{eq:Z_1_contrib}\\
 & \quad-\int_{A_{+}}e^{-N\tilde{F}(y)}E_{y}^{\otimes N}\left(\Sigma^{\mathbf{K}}\left(1-\chi^{+}\right)\right)\text{d}y.\label{eq:Z_1_no_contrib}
\end{align}
By the same reasoning as in Lemma \ref{lem:Z_-1}, the expression
(\ref{eq:Z_1_no_contrib}) goes to 0 faster than any power of $N$.
We centre our attention on the expression (\ref{eq:Z_1_contrib}).
We calculate the conditional expectation
\begin{align*}
E_{y}^{\otimes N}\left(\Sigma^{\mathbf{K}}\right) & =E_{y}^{\otimes N}\left(\prod_{\nu=1}^{M}\left(\frac{\sum_{i_{\nu}=1}^{N_{\nu}}\left(X_{\nu i_{\nu}}-m_{\nu}\right)}{\sqrt{N_{\nu}}}\right)^{K_{\nu}}\right)
 =\prod_{\nu=1}^{M}\frac{E_{y}^{\otimes N}\left(\left(\sum_{i_{\nu}=1}^{N_{\nu}}Y_{\nu i_{\nu}}\right)^{K_{\nu}}\right)}{N_{\nu}^{\frac{K_{\nu}}{2}}},
\end{align*}
where we set for each $\nu$ and each $i_{\nu}$ $Y_{\nu i_{\nu}}:=X_{\nu i_{\nu}}-m_{\nu}$.

As usual in these types of proofs, we now have to see what profiles
of multiindices contribute asymptotically to the moments:
\begin{align*}
E_{y}^{\otimes N}\left(\left(\sum_{i_{\nu}=1}^{N_{\nu}}Y_{\nu i_{\nu}}\right)^{K_{\nu}}\right) & =\sum_{j_{1},\ldots,j_{K_{\nu}}=1}^{N_{\nu}}E_{y}^{\otimes N}\left(Y_{\nu j_{1}}\cdots Y_{\nu j_{K_{\nu}}}\right)
 =\sum_{\underline{r}}w_{K_{\nu}}\left(\underline{r}\right)E_{y}^{\otimes N}\left(Y_{\text{\ensuremath{\underline{r}}}}\right).
\end{align*}
We have by Proposition \ref{thm:comb-coeff-multiindex}
\[
w_{K_{\nu}}(\underline{r})\approx\frac{N_{\nu}^{\sum_{\ell}r_{\ell}}}{r_{1}!r_{2}!\cdots r_{K_{\nu}}!}\frac{K_{\nu}!}{2!^{r_{2}}\cdots K_{\nu}!^{r_{K_{\nu}}}}.
\]
We first note that for a group $\nu$ and $\text{\ensuremath{\underline{r}}}\in\Pi^{0(K_{\nu})}$,
i.e.\! a profile vector with
\begin{align*}
\underline{r} & =\left(r_{1},\frac{K_{\nu}-r_{1}}{2},0,\ldots,0\right),
\end{align*}
and $r_{1}$ and $K_{\nu}$ have the same parity, the expression becomes
\begin{equation}
w_{K_{\nu}}(\underline{r})\approx\frac{N_{\nu}^{\frac{K_{\nu}+r_{1}}{2}}}{r_{1}!\left(\frac{K_{\nu}-r_{1}}{2}\right)!}\frac{K_{\nu}!}{2^{\frac{K_{\nu}-r_{1}}{2}}}.\label{eq:multipl_Pi_0}
\end{equation}
Due to the conditional independence of the $Y_{\nu i_{\nu}}$, we
have
\begin{align*}
E_{y}^{\otimes N}\left(Y_{\text{\ensuremath{\underline{r}}}}\right) & =E_{y}^{\otimes N}\left(Y_{\nu1}\right)^{r_{1}}E_{y}^{\otimes N}\left(Y_{\nu1}^{2}\right)^{\frac{K_{\nu}-r_{1}}{2}}\\
 & =\left(\tanh y_{\nu}-m_{\nu}\right)^{r_{1}}\left(1-2 \tanh y_{\nu} \cdot m_{\nu}+m_{\nu}^{2}\right)^{\frac{K_{\nu}-r_{1}}{2}},
\end{align*}
Let $\underbar{\ensuremath{\mathbf{r}}}:=\left(\underline{r_{1}},\underline{r_{2}},\ldots,\underline{r_{M}}\right)$
be a profile vector for all groups such that for each group $\nu$ we have $\underline{r_{\nu}}=\left(r_{\nu1},\ldots,r_{\nu K_{\nu}}\right)\in\Pi^{0(K_{\nu})}$.
Using Laplace's method in a similar setting as in Proposition \ref{prop:Laplace_multivariate},
we obtain
\begin{align*}
\mathcal{E}\left(\underbar{\ensuremath{\mathbf{r}}}\right) & :=\int_{A_{+}}e^{-N\tilde{F}(t)}E_{y}^{\otimes N}\left(Y_{\text{\ensuremath{\underbar{\ensuremath{\mathbf{r}}}}}}\right)\text{d}y\\
 & =\int_{A_{+}}e^{-N\tilde{F}(t)}\prod_{\nu=1}^{M}\left(\tanh y_{\nu}-m_{\nu}\right)^{r_{\nu1}}\left(1-2 \tanh y_{\nu} \cdot m_{\nu}+m_{\nu}^{2}\right)^{\frac{K_{\nu}-r_{\nu1}}{2}}\text{d}y\\
 & \approx\left(2\pi\right)^{\frac{M}{2}}\sqrt{\det H^{-1}\left(\bar{m}\right)}\left(\prod_{\nu=1}^{M}\left(1-m_{\nu}^{2}\right)^{\frac{K_{\nu}-r_{\nu1}}{2}}\frac{1}{N^{\frac{r_{\nu1}}{2}}}\right)\frac{1}{N^{\frac{1}{2}}}m_{r_{11},\ldots,r_{M1}}\left(H^{-1}\left(\bar{m}\right)\right),
\end{align*}
where $H^{-1}\left(\bar{m}\right)$ in the last line stands for the
inverse of the Hessian matrix of $\tilde{F}$ in the points $\pm\bar{m}$.

We need to normalise $\mathcal{E}\left(\underbar{\ensuremath{\mathbf{r}}}\right)$
by dividing the above expression by
\begin{align*}
\mathcal{E}\left(\underbar{\ensuremath{\mathbf{0}}}\right)  :=\int_{A_{+}}e^{-N\tilde{F}(t)}\text{d}y
  \approx\left(2\pi\right)^{\frac{M}{2}}\sqrt{\det H^{-1}}\frac{1}{N^{\frac{1}{2}}}m_{0,\ldots,0}\left(H^{-1}\right).
\end{align*}
Then we divide these two expressions and obtain
\begin{align}
\frac{\mathcal{E}\left(\underbar{\ensuremath{\mathbf{r}}}\right)}{\mathcal{E}\left(\underbar{\ensuremath{\mathbf{0}}}\right)} & =\left(\prod_{\nu=1}^{M}\left(1-m_{\nu}^{2}\right)^{\frac{K_{\nu}-r_{\nu1}}{2}}\frac{1}{N^{\frac{r_{\nu1}}{2}}}\right)m_{r_{11},\ldots,r_{M1}}\left(H^{-1}\right).\label{eq:expec_cond_moments}
\end{align}
Suppose there is at least one group $\nu$ such that
\[
\underline{r_{\nu}}=\left(r_{\nu1},\ldots,r_{\nu K_{\nu}}\right)\in\Pi^{+(K_{\nu})},
\]
i.e.\! there is some index that
repeats at least three times. By the same reasoning as in Proposition
\ref{prop:Pi_plus_no_contrib}, we conclude that summands corresponding to these profile vectors
do not contribute asymptotically, as they converge to 0, contrary
to those summands where $\underline{r_{\nu}} \in\Pi^{0(K_{\nu})}$
for each group.

To calculate the asymptotic moments, we collect the constants from
the expressions (\ref{eq:multipl_Pi_0}) and (\ref{eq:expec_cond_moments})
and sum over all profile vectors where each group $\nu$ belongs to $\Pi^{0(K_{\nu})}$:
\begin{align}
M_{\mathbf{K}}\left(\Sigma\chi^{+}\right) & \approx\sum_{k_{1}=0}^{K_{1}}\cdots\sum_{k_{M}=0}^{K_{M}}\prod_{\lambda=1}^{M}\frac{\alpha_{\lambda}^{\frac{k_{\lambda}}{2}}}{k_{\lambda}!\left(\frac{K_{\lambda}-k_{\lambda}}{2}\right)!}\frac{K_{\lambda}!}{2^{\frac{K_{\lambda}-k_{\lambda}}{2}}}\left(1-m_{\lambda}^{2}\right)^{\frac{K_{\lambda}-k_{\lambda}}{2}}m_{k_{1},\ldots,k_{M}}\left(H^{-1}\right)\nonumber \\
 & =\prod_{\nu=1}^{M}\left(1-m_{\nu}^{2}\right)^{\frac{K_{\nu}}{2}}\sum_{k_{1}=0}^{K_{1}}\cdots\sum_{k_{M}=0}^{K_{M}}\prod_{\lambda=1}^{M}\frac{\left(\frac{\alpha_{\lambda}}{1-m_{\nu}^{2}}\right)^{\frac{k_{\lambda}}{2}}}{k_{\lambda}!\left(\frac{K_{\lambda}-k_{\lambda}}{2}\right)!}\frac{K_{\lambda}!}{2^{\frac{K_{\lambda}-k_{\lambda}}{2}}}m_{k_{1},\ldots,k_{M}}\left(H^{-1}\right),\label{eq:moment_Cond_A_1}
\end{align}
where each sum is over those $k_{\lambda}$ which have the same parity
as $K_{\lambda}$.

The second factor above,
\[
\sum_{k_{1}=0}^{K_{1}}\cdots\sum_{k_{M}=0}^{K_{M}}\prod_{\lambda=1}^{M}\frac{\left(\frac{\alpha_{\lambda}}{1-m_{\nu}^{2}}\right)^{\frac{k_{\lambda}}{2}}}{k_{\lambda}!\left(\frac{K_{\lambda}-k_{\lambda}}{2}\right)!}\frac{K_{\lambda}!}{2^{\frac{K_{\lambda}-k_{\lambda}}{2}}}m_{k_{1},\ldots,k_{M}}\left(H^{-1}\right),
\]
has the same structure as the moment given in (\ref{eq:S_moments_A_1}).
By the proof of the CLT \ref{thm:CLT}, these are the moments of a centred multivariate
normal distribution. Since in (\ref{eq:moment_Cond_A_1}) there is
another factor present, namely
\[
\prod_{\nu=1}^{M}\left(1-m_{\nu}^{2}\right)^{\frac{K_{\nu}}{2}},
\]
we need the following
\begin{lem}
Let $\ell_{1},\ldots,\ell_{n}\in\mathbb{N}_{0}$. Let $m_{\ell_{1},\ldots,\ell_{n}}:=m_{\ell_{1},\ldots,\ell_{n}}(C)$
be the moment of the centred multivariate normal distribution with
covariance matrix $C=\left(c_{ij}\right)_{i,j=1,\ldots,n}$. Let $s_{1},\ldots,s_{n}$
be positive numbers. Then
\begin{equation*}
s_{1}^{\frac{\ell_{1}}{2}}\cdots s_{n}^{\frac{\ell_{n}}{2}}m_{\ell_{1},\ldots,\ell_{n}}
\end{equation*}
is the moment of order $\left(\ell_{1},\ldots,\ell_{n}\right)$ of
the centred multivariate distribution with covariance matrix
\[
D:=\left(d_{ij}\right)_{i,j=1,\ldots,n}:=\left(\sqrt{s_{i}s_{j}}c_{ij}\right)_{i,j=1,\ldots,n}.
\]
\end{lem}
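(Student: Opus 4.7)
The plan is to recognise the rescaled covariance matrix $D$ as arising from a simple diagonal rescaling of a Gaussian vector with covariance $C$, and to transfer the moment computation across this linear transformation.

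First I would introduce a random vector $(Z_1,\ldots,Z_n)\sim\mathcal{N}(0,C)$, whose mixed moments are by definition $m_{\ell_1,\ldots,\ell_n}(C)$. Then I would set $W_i:=\sqrt{s_i}\,Z_i$ and observe that $W=(W_1,\ldots,W_n)$ is a linear image of a centred Gaussian vector, hence centred Gaussian, with covariance
\[
\mathrm{Cov}(W_i,W_j)=\sqrt{s_i s_j}\,\mathrm{Cov}(Z_i,Z_j)=\sqrt{s_i s_j}\,c_{ij}=d_{ij}.
\]
Writing $S:=\mathrm{diag}(\sqrt{s_1},\ldots,\sqrt{s_n})$, we have $D=SCS$, which is positive semi-definite because $C$ is and each $s_i>0$, so $D$ is a legitimate covariance matrix and $W\sim\mathcal{N}(0,D)$. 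The moment of order $(\ell_1,\ldots,\ell_n)$ of $\mathcal{N}(0,D)$ is therefore
\[
m_{\ell_1,\ldots,\ell_n}(D)=\mathbb{E}\!\left(W_1^{\ell_1}\cdots W_n^{\ell_n}\right)=s_1^{\ell_1/2}\cdots s_n^{\ell_n/2}\,\mathbb{E}\!\left(Z_1^{\ell_1}\cdots Z_n^{\ell_n}\right)=s_1^{\ell_1/2}\cdots s_n^{\ell_n/2}\,m_{\ell_1,\ldots,\ell_n}(C),
\]
which is precisely the identity \eqref{eq:modified_moment}.

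If one prefers an argument that stays within the combinatorial framework already used in the paper, the same identity falls out of Isserlis's Theorem \ref{thm:Isserlis}: the right hand side of \eqref{eq:Isserlis} applied to $D$ replaces each pairing covariance $c_{\pi(k)}$ by $\sqrt{s_{\pi(k)_1}s_{\pi(k)_2}}\,c_{\pi(k)}$, and since in any pair partition of the multiset $\bigcup_{i=1}^n\{Z_{i1},\ldots,Z_{i\ell_i}\}$ each index $i$ appears in exactly $\ell_i$ pairs, the square-root factors multiply out to $s_1^{\ell_1/2}\cdots s_n^{\ell_n/2}$, which can be pulled outside the sum over pair partitions. There is no substantive obstacle: both routes are essentially a change of variables in the Gaussian density, and the only point requiring any care is the positive semi-definiteness of $D$, which is immediate from the positivity of the $s_i$. (The case in which $\ell_1+\cdots+\ell_n$ is odd is trivial, since both sides vanish by symmetry.)
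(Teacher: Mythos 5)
Your proposal is correct. Your primary argument (setting $W_i:=\sqrt{s_i}\,Z_i$ and reading off the covariance of the linearly transformed Gaussian vector) is a genuinely different and more conceptual route than the paper's: the paper works entirely inside the combinatorial framework of Isserlis's Theorem, distributing the $\ell_i$ factors of $\sqrt{s_i}$ over the pairs of each pair partition so that each $c_{\pi(k)}$ becomes $d_{\pi(k)}$, and then summing over partitions. Your change-of-variables argument buys brevity and makes the statement transparent (it is just the scaling behaviour of Gaussian moments under a diagonal linear map), at the small cost of having to note that $D=SCS$ is a legitimate covariance matrix; the paper's combinatorial argument stays uniform with the surrounding moment computations and never leaves the pair-partition formalism. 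Your second, alternative route is essentially identical to the paper's own proof, so you have in effect supplied both. No gaps.
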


\begin{proof}
We omit the proof which consists of an application of Isserlis's Theorem.
\end{proof}
This lemma shows that the expression (\ref{eq:moment_Cond_A_1}) is
the moment of order $\mathbf{K}$ of a centred multivariate normal
distribution.

To calculate the covariance matrix $E$ as a function of the Hessian
matrix $H$ of $\tilde{F}$ at $\pm\bar{m}$, we use the formula
(\ref{eq:moment_Cond_A_1}). The diagonal entries of $E$ are
\begin{align*}
e_{\lambda\lambda} & =\left(1-m_{\lambda}^{2}\right)\sum_{k_{\lambda}=0,2}\frac{\left(\frac{\alpha_{\lambda}}{1-\left(m_{\nu}^{*}\right)^{2}}\right)^{\frac{k_{\lambda}}{2}}}{k_{\lambda}!\left(\frac{2-k_{\lambda}}{2}\right)!}\frac{2!}{2^{\frac{2-k_{\lambda}}{2}}}m_{0,\ldots,0,k_{\lambda},0,\ldots,0}\left(H^{-1}\right)\\
 & =1-m_{\lambda}^{2}+\alpha_{\lambda}\left(H^{-1}\right)_{\lambda\lambda}.
\end{align*}
The off-diagonal entries are given by
\begin{align*}
e_{\lambda\mu} & =\sqrt{1-m_{\lambda}^{2}}\sqrt{1-m_{\mu}^{2}}\sum_{k_{\lambda}=1}^{1}\sum_{k_{\mu}=1}^{1}\frac{\left(\frac{\alpha_{\lambda}}{1-\left(m_{\nu}^{*}\right)^{2}}\right)^{\frac{k_{\lambda}}{2}}}{k_{\lambda}!\left(\frac{1-k_{\lambda}}{2}\right)!}\frac{1!}{2^{\frac{1-k_{\lambda}}{2}}}\frac{\left(\frac{\alpha_{\mu}}{1-\left(m_{\nu}^{*}\right)^{2}}\right)^{\frac{k_{\mu}}{2}}}{k_{\mu}!\left(\frac{1-k_{\mu}}{2}\right)!}\frac{1!}{2^{\frac{1-k_{\mu}}{2}}}\cdot\\
 & \quad\cdot m_{0,\ldots,0,k_{\lambda},0,\ldots,0,k_{\mu},0,\ldots,0}\left(H^{-1}\right)\\
 & =\sqrt{\alpha_{\lambda}\alpha_{\mu}}\left(H^{-1}\right)_{\lambda\mu}.
\end{align*}
We can thus express the covariance matrix as
\[
E=\text{diag}\left(1-m_{\lambda}^{2}\right)+\sqrt{\boldsymbol{\alpha}}H^{-1}\sqrt{\boldsymbol{\alpha}}.
\]

\section{\label{sec:SLLN}Strong Laws of Large Numbers}

In this section, we prove the statements in Remarks \ref{rem:LLN_high}
and \ref{rem:LLN_crit}. It is well known that a sequence of random
variables which converges in distribution to a constant also converges
in probability. Thus the Weak Law of Large Numbers holds for the
LLNs \ref{thm:LLN_high_temp} and \ref{thm:LLN_crit_reg}.

We next state a proposition that will allow us to show almost sure
convergence.
\begin{prop}
\label{prop:summable_events}Let $(X_{n})$ be a sequence of real
random variables defined on the probability space $(\Omega,\mathcal{A},P)$.
Set $A_{m,k}:=\left\{ |X_{m}|>\frac{1}{k}\right\} $, $A:=\{\lim_{n\rightarrow\infty}X_{n}=0\}.$
If $\sum_{m=1}^{\infty}P(A_{m,k})<\infty$ for all $k\in\mathbb{N}$,
then $P(A)=1$.
\end{prop}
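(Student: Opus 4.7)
The plan is to recognise this as a routine application of the first Borel--Cantelli lemma, once the event $A$ is rewritten in terms of the sets $A_{m,k}$. The assumption that $\sum_m P(A_{m,k}) < \infty$ for every fixed $k$ is precisely the input needed for Borel--Cantelli, so the work reduces to a measure-theoretic set-algebra bookkeeping argument.

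First I would rewrite the event $A = \{\lim_{n\to\infty} X_n = 0\}$ in the standard way using the $\varepsilon$-$N$ definition of convergence, restricted to $\varepsilon = 1/k$ for $k \in \mathbb{N}$:
\[
A \;=\; \bigcap_{k=1}^{\infty} \bigcup_{N=1}^{\infty} \bigcap_{n=N}^{\infty} \{|X_n| \leq 1/k\} \;=\; \bigcap_{k=1}^{\infty} \bigcup_{N=1}^{\infty} \bigcap_{n=N}^{\infty} A_{n,k}^c .
\]
Taking complements yields
\[
A^c \;=\; \bigcup_{k=1}^{\infty} \bigcap_{N=1}^{\infty} \bigcup_{n=N}^{\infty} A_{n,k} \;=\; \bigcup_{k=1}^{\infty} \limsup_{n\to\infty} A_{n,k} .
\]

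Next I would invoke the first Borel--Cantelli lemma: for each fixed $k \in \mathbb{N}$ the hypothesis $\sum_{m=1}^{\infty} P(A_{m,k}) < \infty$ gives $P\!\left(\limsup_n A_{n,k}\right) = 0$. Applying countable subadditivity of $P$ to the countable union over $k$,
\[
P(A^c) \;\leq\; \sum_{k=1}^{\infty} P\!\left(\limsup_{n\to\infty} A_{n,k}\right) \;=\; 0,
\]
so $P(A) = 1$, as required.

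There is no real obstacle here; the only thing one has to be a little careful about is that the quantifier rewriting of $\{X_n \to 0\}$ really does give a countable intersection over $k$ (not an uncountable one over $\varepsilon > 0$), which is where measurability is preserved and the union bound over $k$ can be applied. Everything else is a direct citation of Borel--Cantelli.
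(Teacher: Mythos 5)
Your proof is correct and complete: the rewriting of $\{\lim_n X_n = 0\}$ as $\bigcap_{k}\bigcup_{N}\bigcap_{n\geq N} A_{n,k}^{c}$, the application of the first Borel--Cantelli lemma for each fixed $k$, and the countable union bound over $k$ together give $P(A^c)=0$ with no gaps. The paper itself does not prove this proposition but only cites Proposition 3.33 of \cite{MM}; your argument is the standard one that such a reference would contain, so there is nothing to reconcile.
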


\begin{proof}
This is Proposition 3.33 in \cite{MM}.
\end{proof}
Now we show that the normalised sums $\frac{S_{\lambda}}{N_{\lambda}}$
converge to 0 faster than any power of $N$ when we are either in
the high temperature or the critical regime.
\begin{thm}
Assume we are in the high temperature or the critical regime, and
let $K\in\mathbb{N}$ and $a>0$. Then there is a constant $d_{K}$
such that
\[
\mathbb{P}\left(\left\Vert \left(\frac{S_{1}}{N_{1}},\ldots,\frac{S_{M}}{N_{M}}\right)\right\Vert >a\right)\leq\frac{d_{K}}{a^{4K}N^{K}},
\]
where $\left\Vert \cdot\right\Vert $ stands for the max norm on $\mathbb{R}^{M}$.
\end{thm}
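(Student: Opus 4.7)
The plan is to combine a union bound over the $M$ groups with a higher-order Markov (Chebyshev) inequality, using the moment estimates already established in the preceding sections. The target bound $d_K/(a^{4K} N^K)$ is precisely what one obtains from Markov's inequality applied to the $4K$-th moment of $S_\lambda/N_\lambda$, once one notes that this moment is asymptotically of order $N^{-K}$ in the critical regime (and even smaller in the high temperature regime).

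First, since $\left\Vert \cdot\right\Vert$ is the max norm, a union bound gives
\[
\mathbb{P}\!\left(\left\Vert \left(\tfrac{S_{1}}{N_{1}},\ldots,\tfrac{S_{M}}{N_{M}}\right)\right\Vert >a\right)\;\leq\;\sum_{\lambda=1}^{M}\mathbb{P}\!\left(\left|\tfrac{S_{\lambda}}{N_{\lambda}}\right|>a\right).
\]
Then I would apply Markov's inequality with the (even) exponent $4K$ to each summand,
\[
\mathbb{P}\!\left(\left|\tfrac{S_{\lambda}}{N_{\lambda}}\right|>a\right)\;\leq\;\frac{1}{a^{4K}}\,\mathbb{E}\!\left(\left(\tfrac{S_{\lambda}}{N_{\lambda}}\right)^{4K}\right).
\]

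The remaining task is to bound $\mathbb{E}((S_{\lambda}/N_{\lambda})^{4K})$ by $C_{K}/N^{K}$ uniformly in $N$, where $C_{K}$ does not depend on $\lambda$. In the high temperature regime, the marginal of the CLT (Theorem~\ref{thm:CLT}) together with the moment analysis in Subsection~\ref{subsec:High_Temp_Proof} shows that $\mathbb{E}((S_{\lambda}/\sqrt{N_{\lambda}})^{4K})$ converges to the corresponding moment of a centred Gaussian and is in particular bounded by a constant; writing
\[
\mathbb{E}\!\left(\left(\tfrac{S_{\lambda}}{N_{\lambda}}\right)^{4K}\right)=\frac{1}{N_{\lambda}^{2K}}\,\mathbb{E}\!\left(\left(\tfrac{S_{\lambda}}{\sqrt{N_{\lambda}}}\right)^{4K}\right)
\]
one gets an $O(N^{-2K})$ bound, which is stronger than required. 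In the critical regime, Theorems~\ref{Fluctuations_hom} and~\ref{Fluctuations_het} (applied to a marginal, which is the univariate case obtained by taking all other $K_\mu=0$) yield that $\mathbb{E}((S_{\lambda}/N_{\lambda}^{3/4})^{4K})$ converges to a finite limit, whence
\[
\mathbb{E}\!\left(\left(\tfrac{S_{\lambda}}{N_{\lambda}}\right)^{4K}\right)=\frac{1}{N_{\lambda}^{K}}\,\mathbb{E}\!\left(\left(\tfrac{S_{\lambda}}{N_{\lambda}^{3/4}}\right)^{4K}\right)\leq\frac{C_{K}'}{N^{K}}
\]
for $N$ large, using that $N_{\lambda}/N\to\alpha_{\lambda}>0$. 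Summing over $\lambda$ and setting $d_{K}:=M\,C_{K}$ (enlarged if necessary to absorb any finitely many small-$N$ cases, where the trivial bound $|S_\lambda/N_\lambda|\leq 1$ is available) yields the claim.

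The main technical point is not the inequality itself but the passage from the \emph{asymptotic} moment identities stated in the earlier theorems to a genuine uniform-in-$N$ upper bound. This is handled by the same argument used in Proposition~\ref{prop:Pi_plus_no_contrib} and its critical-regime analogue: the sums over profile vectors contain only finitely many terms, each of which is at most a constant times a power of $N$ whose exponent is no larger than the dominant one, so the whole expression is bounded above by a constant multiple of that dominant term for all $N$. With this uniform bound, Proposition~\ref{prop:summable_events} applied to each coordinate $S_{\lambda}/N_{\lambda}$ (with $k$ fixed and $K$ chosen so that $K\geq 2$ making $\sum_{N}N^{-K}$ summable) delivers the almost sure convergence claimed in Remarks~\ref{rem:LLN_high} and~\ref{rem:LLN_crit}.
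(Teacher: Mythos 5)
Your proposal is correct and follows essentially the same route as the paper: a reduction to each coordinate via the max norm, Markov's inequality at order $4K$, and the previously established bounds on $\mathbb{E}\bigl(|S_\lambda/N_\lambda^{3/4}|^{4K}\bigr)$ (the paper uses the $N_\lambda^{3/4}$ normalisation in both regimes, while you use $\sqrt{N_\lambda}$ in the high temperature case to get the stronger $O(N^{-2K})$ bound, an immaterial difference). Your explicit attention to the union-bound factor $M$ and to upgrading the asymptotic moment identities to uniform-in-$N$ bounds is if anything slightly more careful than the paper's own argument.
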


\begin{proof}
Let $K\in\mathbb{N}$. We show for all $\lambda$:
\[
\mathbb{P}\left(\left|\frac{S_{\lambda}}{N_{\lambda}}\right|>a\right)\leq\frac{d_{K}}{a^{4K}N^{K}}.
\]
We have
\begin{align*}
\mathbb{P}\left(\left|\frac{S_{\lambda}}{N_{\lambda}}\right|>a\right) & \leq\frac{1}{a^{4K}N_{\lambda}^{K}}\mathbb{E}\left(\left|\frac{S_{\lambda}}{N_{\lambda}^{\frac{3}{4}}}\right|^{4K}\right),
\end{align*}
where we applied Markov's inequality.

If we are in the high temperature regime, then $\mathbb{E}\left(\left|\frac{S_{\lambda}}{N_{\lambda}^{\frac{3}{4}}}\right|^{4K}\right)$
converges to 0. In the critical regime, we have $\mathbb{E}\left(\left|\frac{S_{\lambda}}{N_{\lambda}^{\frac{3}{4}}}\right|^{4K}\right)\approx b_{4K},$
where $b_{4K}=c_{0,\ldots,0,4K,0,\ldots,0}\alpha_{\lambda}^{K}$ from
(\ref{eq:constants_crit}) with the entry $4K$ at the position $\lambda$
and 0 everywhere else. We are done once we set $d_{K}$ equal to the maximum of these $b_{4K}$ over all $\lambda$.
\end{proof}
Now we can prove that the Strong Law of Large Numbers holds in the
high temperature and critical regimes.
\begin{thm}
Assume we are in the high temperature or the critical regime. Then
\[
\lim_{N\rightarrow\infty}\left(\frac{S_{1}}{N_{1}},\ldots,\frac{S_{M}}{N_{M}}\right)=(0,\ldots,0)\quad \text{a.s.}
\]
\end{thm}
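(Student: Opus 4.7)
The plan is to combine the polynomial tail estimate from the previous theorem with the Borel--Cantelli-type criterion in Proposition \ref{prop:summable_events}. Since almost sure convergence of a vector in $\mathbb{R}^M$ is equivalent to almost sure convergence of each coordinate (as $M$ is finite and a finite intersection of probability-one events has probability one), it suffices to show for every fixed group $\lambda \in \{1,\ldots,M\}$ that
\[
\lim_{N\to\infty}\frac{S_\lambda}{N_\lambda}=0 \quad \mathbb{P}\text{-a.s.}
\]

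First, fix $\lambda$ and fix $k\in\mathbb{N}$, and define the events
\[
A_{N,k}:=\left\{\left|\frac{S_\lambda}{N_\lambda}\right|>\tfrac{1}{k}\right\}.
\]
Applying the previous theorem with $a=1/k$ gives, for every $K\in\mathbb{N}$, a constant $d_K$ (independent of $N$) such that
\[
\mathbb{P}(A_{N,k})\leq\frac{d_K\, k^{4K}}{N^K}.
\]
Now choose $K\geq 2$. Using $N_\lambda \sim \alpha_\lambda N$ (or, for $\alpha_\lambda=0$ groups, the fact that $N_\lambda\to\infty$ still allows replacing $N$ by a power of $N_\lambda$ via the same Markov-inequality argument as in the previous theorem), the bound $\mathbb{P}(A_{N,k})\leq C_k/N^K$ holds with a constant $C_k$ depending on $k$ but not on $N$. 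Hence
\[
\sum_{N=1}^{\infty}\mathbb{P}(A_{N,k})\leq C_k\sum_{N=1}^{\infty}\frac{1}{N^K}<\infty
\]
for every $k\in\mathbb{N}$. Proposition \ref{prop:summable_events} then yields $\mathbb{P}(S_\lambda/N_\lambda\to 0)=1$.

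Taking the intersection over $\lambda\in\{1,\ldots,M\}$ concludes the proof. The only conceptual point worth noting is that the statement presupposes a single probability space on which all the random vectors $(S_1/N_1,\ldots,S_M/N_M)$ for every $N$ are simultaneously defined; this is implicit in the formulation and is obtained, for instance, by taking a product space carrying, for each $N$, an independent realisation of the spin configuration under the Curie--Weiss measure $\mathbb{P}$ with that value of $N$. With this coupling, the argument above applies verbatim. The main obstacle was already overcome in proving the polynomial tail bound in the preceding theorem; given that bound, the strong law reduces to a one-line summability check.
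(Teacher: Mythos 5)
Your proposal is correct and follows essentially the same route as the paper: the polynomial tail bound from the preceding theorem applied with $a=1/k$, summability of $\mathbb{P}(A_{N,k})$ over $N$ (the paper simply fixes $K=2$), and Proposition \ref{prop:summable_events} to conclude coordinate-wise almost sure convergence. Your additional remarks on the choice of a common probability space and on small groups are sensible asides but do not change the argument.
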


\begin{proof}
Let $\lambda\in\{1,\ldots,M\}$, and let for all $N,k\in\mathbb{N}$
\[
A_{N,k}:=\left\{ \left|\frac{S_{\lambda}}{N_{\lambda}}\right|>\frac{1}{k}\right\} .
\]
By the previous theorem, we have
\[
\mathbb{P}\left(A_{N,k}\right)\leq\frac{d_{2}k^{8}}{N^{2}}
\]
for some constant $d_{2}$. Then $\sum_{N=1}^{\infty}\mathbb{P}\left(A_{N,k}\right)<\infty$
holds and we can apply Proposition \ref{prop:summable_events} to
conclude that
\[
\mathbb{P}\left(\lim_{N\rightarrow\infty}\frac{S_{\lambda}}{N_{\lambda}}=0\right)=1.
\]
\end{proof}

\section*{Declarations Section}

\subsection*{Conflict of Interests}
Not applicable.

\subsection*{Availability of Data and Materials}
Not applicable.

\end{document}